\newcommand\restr[2]{{
  \left.\kern-\nulldelimiterspace 
  #1 
  \vphantom{\big|} 
  \right|_{#2}
  }}
\begin{document}
\begin{sloppypar}





\newenvironment{fig}
{\begin{figure}[hbt]}
{\end{figure}}
\newcommand{\bfig}{\begin{fig}}
\newcommand{\efig}{\end{fig}}

\newcommand{\rc}{{\scriptscriptstyle\#}}
\newcommand{\set}[1]{\left| {#1}\right|}
\newcommand{\setof}[1]{\left\{ {#1}\right\}}
\newcommand{\mvmap}{\raisebox{-0.2ex}{$\,\overrightarrow{\to}\,$}}
\newcommand{\setdef}[2]{\left\{{#1}\,\left|\,{#2}\right.\right\}}
\newcommand{\bdf}{\hbox{\rm bd$_f$}}
\newcommand{\etale}{{\'etal\'e~}}
\newcommand{\Cech}{{{\v C}ech}~}

\newcommand{\bv}{{\textbf{v}}}
\newcommand{\bz}{{\textbf{z}}}

\newcommand{\D}{{\mathbb{D}}}
\newcommand{\F}{{\mathbb{F}}}
\newcommand{\N}{{\mathbb{N}}}
\newcommand{\Q}{{\mathbb{Q}}}
\newcommand{\R}{{\mathbb{R}}}
\newcommand{\T}{{\mathbb{T}}}
\newcommand{\Z}{{\mathbb{Z}}}
\newcommand{\Sb}{{\mathbb{S}}}

\newcommand{\sh}{{\mathsf h}}





\newcommand{\gAtt}{{\mathcal{A}\!\mathcal{t}\!\!\mathcal{t}}}
\newcommand{\gMorse}{{\mathcal{M}\!\!\mathcal{o}\!\mathcal{r}\!\mathcal{s}\!\mathcal{e}}}
\newcommand{\gANbhd}{{\mathcal{A}\!\!\mathcal{N}\!\mathcal{b}\!\mathcal{h}\!\mathcal{d}}}



\newcommand{\calP}{{\mathcal P}}
\newcommand{\calW}{{\mathcal W}}
\newcommand{\calB}{{\mathcal B}}

\newcommand{\cA}{{\mathbf A}}
\newcommand{\cB}{{\mathbf B}}
\newcommand{\cC}{{\mathbf C}}
\newcommand{\cD}{\mathbf{D}}
\newcommand{\cE}{{\mathbf E}}
\newcommand{\cF}{{\mathbf F}}
\newcommand{\cG}{{\mathbf X}} 
\newcommand{\cH}{{\mathbf H}}
\newcommand{\cI}{{\mathbf I}}
\newcommand{\cJ}{{\mathbf J}}
\newcommand{\cK}{{\mathbf K}}
\newcommand{\cL}{{\mathbf L}}
\newcommand{\cM}{{\mathbf M}}
\newcommand{\cN}{{\mathbf N}}
\newcommand{\cO}{{\mathcal O}}
\newcommand{\cP}{{\mathbf P}}
\newcommand{\cQ}{{\mathbf Q}}
\newcommand{\cR}{{\mathbf R}}
\newcommand{\cS}{{\mathbf S}}
\newcommand{\cT}{{\mathbf T}}
\newcommand{\cU}{{\mathbf U}}
\newcommand{\cV}{{\mathbf V}}
\newcommand{\cW}{{\mathbf W}}
\newcommand{\cX}{{\mathbf X}}
\newcommand{\cY}{{\mathbf Y}}

\newcommand{\sA}{{\mathsf A}}
\newcommand{\sa}{{\mathsf a}}
\newcommand{\sB}{{\mathsf B}}
\newcommand{\B}{{\mathsf B}}
\newcommand{\ssb}{{\mathsf b}}
\newcommand{\sC}{{\mathsf C}}
\newcommand{\sD}{{\mathsf D}}
\newcommand{\sd}{{\mathsf d}}
\newcommand{\sE}{{\mathsf E}}
\newcommand{\sF}{{\mathsf F}}
\newcommand{\sG}{{\mathsf G}}
\newcommand{\sH}{{\mathsf H}}
\newcommand{\sI}{{\mathsf I}}
\newcommand{\sJ}{{\mathsf J}}
\newcommand{\sK}{{\mathsf K}}
\newcommand{\sL}{{\mathsf L}}
\newcommand{\sM}{{\mathsf M}}
\newcommand{\sMM}{{\mathsf {MM}}}
\newcommand{\sN}{{\mathsf N}}
\newcommand{\sn}{{\mathsf n}}
\newcommand{\sm}{{\mathsf m}}
\newcommand{\sv}{{\mathsf v}}
\newcommand{\sO}{{\mathsf O}}
\newcommand{\sP}{{\mathsf P}}
\newcommand{\sQ}{{\mathsf Q}}
\newcommand{\sR}{{\mathsf R}}
\newcommand{\sS}{{\mathsf \Sigma}}
\newcommand{\sT}{{\mathsf T}}
\newcommand{\sU}{{\mathsf U}}
\newcommand{\sW}{{\mathsf W}}
\newcommand{\sw}{{\mathsf w}}
\newcommand{\sV}{{\mathsf V}}
\newcommand{\sX}{{\mathsf X}}
\newcommand{\sY}{{\mathsf Y}}
\newcommand{\sZ}{{\mathsf Z}}
\newcommand{\sfS}{{\mathsf \Sigma}}

\newcommand{\CF}{{\mathsf{C}}}
\newcommand{\CCF}{{\widehat{\CF}}}
\newcommand{\Co}{{\mathsf{Co}}}
\newcommand{\sOcl}{{\mathsf O}^{\rm clp}}
\newcommand{\sUcl}{{\mathsf U}^{\rm clp}}

\newcommand{\fM}{{\mathfrak M}}
\newcommand{\fD}{{\mathfrak D}}
\newcommand{\fT}{{\mathfrak T}}
\newcommand{\bC}{{\mathfrak C}}
\newcommand{\bD}{{\mathfrak D}}
\newcommand{\bF}{{\mathtt F}}
\newcommand{\ob}{{\mathtt{Obj}}}
\newcommand{\homC}{{\mathtt{Hom}}}
\newcommand{\morC}{{\mathfrak{Mor}}}
\newcommand{\Psh}{{\mathfrak{Psh}}}
\newcommand{\Sh}{{\mathfrak{Sh}}}
\newcommand{\Shsp}{{\mathfrak{Shsp}}}
\newcommand{\Poset}{{\mathfrak{Poset}}}
\newcommand{\Ab}{{\mathfrak{Ab}}}
\newcommand{\Latt}{{\mathfrak{Lat}}}
\newcommand{\iddd}{{\mathfrak{1}}}
\newcommand{\DF}{{\mathfrak{Lat}_D^F}}
\newcommand{\PF}{{\mathfrak{Poset}_F}}
\newcommand{\ARop}{{\rm AR}}
\newcommand{\wX}{{\widehat X}}
\newcommand{\sId}{{\mathsf{id}}}

\newcommand{\scrA}{{\mathcal A}}
\newcommand{\scrB}{{\mathscr B}}
\newcommand{\scrM}{{\mathscr M}}
\newcommand{\scrN}{{\mathcal N}}
\newcommand{\scrF}{\mathscr{F}}
\newcommand{\scrG}{\mathscr{G}}
\newcommand{\scrE}{\mathscr{E}}
\newcommand{\scrH}{{\mathscr H}}
\newcommand{\scrJ}{{\mathscr J}}
\newcommand{\scrK}{\mathscr{K}}
\newcommand{\scrS}{\mathscr{S}}
\newcommand{\scrR}{{\mathscr R}}
\newcommand{\scrD}{{\mathscr D}}
\newcommand{\scrO}{{\mathscr O}}
\newcommand{\scrC}{{\mathscr C}}
\newcommand{\scrU}{{\mathscr U}}
\newcommand{\scrV}{{\mathscr V}}
\newcommand{\scrW}{{\mathscr W}}
\newcommand{\scrT}{{\mathscr T}}
\newcommand{\tf}{\mathcal{f}}
\newcommand{\tg}{\mathcal{g}}
\newcommand{\thh}{\mathcal{h}}
\newcommand{\LL}{{\mathrm{L}}}

\newcommand{\cFm}{{\mathcal F}_{{\rm o}}}
\newcommand{\cFme}{{\mathcal F}_{{\rho}}}
\newcommand{\cFmen}{{\mathcal F}_{{\rho}_n}}
\newcommand{\be}{{\bm{\varepsilon}}}
\newcommand{\sRA}{{\mathsf{ RA}}}
\newcommand{\sAtt}{{\mathsf{ Att}}}
\newcommand{\sRep}{{\mathsf{ Rep}}}
\newcommand{\sMorse}{{\mathsf{ Morse}}}
\newcommand{\sMRep}{{\mathsf{ MRep}}}
\newcommand{\sMTess}{{\mathsf{ MTess}}}
\newcommand{\sOrdTess}{{\mathsf{ OrdTess}}}
\newcommand{\sMRepr}{{\mathsf{ MRepr}}}
\newcommand{\sMTile}{{\mathsf{ MTile}}}
\newcommand{\sMTileC}{{\mathsf{ MTile}_\scrC}}
\newcommand{\sMTileR}{{\mathsf{ MTile}_\scrR}}
\newcommand{\sInvset}{{\mathsf{ Invset}}}
\newcommand{\sSInvset}{{\mathsf{ SInvset}}}
\newcommand{\sConvset}{{\mathsf{CXset}}}
\newcommand{\IS}{{\mathsf{ Invset}}}
\newcommand{\SIS}{{\mathsf{ SInvset}}}
\newcommand{\sPLyap}{{\mathsf{ PLyap}}}
\newcommand{\nbhd}{{\mathsf{Nbhd}}}
\newcommand{\sHom}{{\mathsf{Hom}}}
\newcommand{\sConvex}{{\mathsf{Co}}}
\newcommand{\sGrid}{{\mathsf{Grid}}}
\newcommand{\sJj}{\mathsf{J}}
\newcommand{\sJm}{\mathsf{J^\wedge}}
\newcommand{\sANbhd}{{\mathsf{ ANbhd}}}
\newcommand{\sRNbhd}{{\mathsf{ RNbhd}}}
\newcommand{\sANbhdR}{{\mathsf{ ANbhd}}_{\scrR}}
\newcommand{\sRNbhdR}{{\mathsf{ RNbhd}}_{\scrR}}
\newcommand{\sINbhd}{{\mathsf{ INbhd}}}
\newcommand{\sMNbhd}{{\mathsf{ MNbhd}}}
\newcommand{\sINbhdR}{{\mathsf{ INbhd}}_{\scrR}}
\newcommand{\sTrapR}{{\mathsf{ TrapR}}}
\newcommand{\sRepR}{{\mathsf{ RepR}}}
\newcommand{\sTrapRC}{{\mathsf{ TrapR}}_\scrC}
\newcommand{\sRepRO}{{\mathsf{ RepR}}_{\scrO}}
\newcommand{\sTrapRR}{{\mathsf{ TrapR}}_{\scrR}}
\newcommand{\sRepRR}{{\mathsf{ RepR}}_{\scrR}}
\newcommand{\sTrapRRo}{{\mathsf{ TrapR}}_{\scrR^*}}
\newcommand{\sTrapRRu}{{\mathsf{ TrapR}}_{\scrR^\rc}}
\newcommand{\sRepRRo}{{\mathsf{ RepR}}_{\scrR^*}}
\newcommand{\sABlockR}{{\mathsf{ABlock}}_{\scrR}}
\newcommand{\sRBlockR}{{\mathsf{RBlock}}_{\scrR}}
\newcommand{\sRBlockRo}{{\mathsf{RBlock}}_{\scrR^*}}
\newcommand{\sABlockC}{{\mathsf{ABlock}}_{\scrC}}
\newcommand{\sRBlockO}{{\mathsf{RBlock}}_{\scrO}}
\newcommand{\sABlock}{{\mathsf{ABlock}}}
\newcommand{\sRBlock}{{\mathsf{RBlock}}}
\newcommand{\sIBlock}{{\mathsf{IBlock}}}
\newcommand{\sIBlockR}{{\mathsf{IBlock}}_{\scrR}}
\newcommand{\sPow}{{\mathsf{F}}}
\newcommand{\sMD}{{\mathsf{MD}}}
\newcommand{\sMT}{{\mathsf{MT}}}
\newcommand{\sDS}{{\mathbf{DS}}}
\newcommand{\sDSo}{{\mathbf{DSo}}}
\newcommand{\sCDS}{{\mathbf{CDS}}}
\newcommand{\sCDSo}{{\mathbf{CDSo}}}
\newcommand{\sob}{{\mathsf{ob}}}
\newcommand{\cod}{{\mathsf{cod}}}
\newcommand{\dom}{{\mathsf{dom}}}
\newcommand{\Hom}{{\mathsf{hom}}}

\newcommand{\sOC}{{\mathsf{oc}}}
\newcommand{\sub}{{\rm sub}}
\newcommand{\subAttF}{{{\rm sub}_F\mathsf{Att}}}
\newcommand{\subANbhdRF}{{{\rm sub}_F\mathsf{ANbhd}_{\scrR}}}
\newcommand{\subANbhdF}{{{\rm sub}_{0,1}^F\mathsf{ANbhd}}}
\newcommand{\subRepF}{{{\rm sub}_{0,1}^F\mathsf{Rep}}}
\newcommand{\subRNbhdF}{{{\rm sub}_{0,1}^F\mathsf{RNbhd}}}
\newcommand{\subFR}{{{\rm sub}_F\mathscr{R}}}
\newcommand{\subO}{{{\rm sub}_{0,1}\mathsf{O}}}
\newcommand{\sIsol}{{\mathsf{ Isol}}}
\newcommand{\sMd}{{\mathsf{M}}}
\newcommand{\sLatt}{{\mathsf{L}}}
\newcommand{\sLat}{{\mathbf{Lat}}}
\newcommand{\sBLat}{{\mathbf{BLat}}}
\newcommand{\sSet}{{\mathsf{Set}}}
\newcommand{\sASet}{{\mathsf{ASet}}}
\newcommand{\sRSet}{{\mathsf{RSet}}}
\newcommand{\DG}{{(\cG,\cE)}}
\newcommand{\con}{{\leftrightarrowtriangle}}
\newcommand{\conn}{{\circlearrowleft}}
\newcommand{\reach}{{\rightsquigarrow}}
\newcommand{\sSCC}{{\mathsf{SC}}}
\newcommand{\sSC}{{\mathsf{S}}}
\newcommand{\sSAtom}{{\mathsf{SAtom}}}
\newcommand{\bomega}{{\bm{\omega}}}
\newcommand{\balpha}{{\bm{\alpha}}}
\newcommand{\bth}{{\bm{\vartheta}}}
\newcommand{\try}{{\bm{\eta}}}
\newcommand{\dyn}{{\mathscr{D}(X,\T)}}
\newcommand{\sBool}{{\mathbf{Bool}}}
\newcommand{\sBoolRng}{{\mathbf{BoolRing}}}
\newcommand{\sFBool}{{\mathbf{FBool}}}
\newcommand{\sFDLat}{{\mathbf{FDLat}}}
\newcommand{\sBDLat}{{\mathbf{BDLat}}}
\newcommand{\sRing}{{\mathbf{Ring}}}
\newcommand{\rmod}{R{\mathbf{\text{-}Mod}}}
\newcommand{\sMonoid}{{\mathbf{Monoid}}}
\newcommand{\sAlg}{{\mathbf{Alg}}}
\newcommand{\sTop}{{\mathbf{Top}}}
\newcommand{\sTco}{{\mathbf{Tco}}}
\newcommand{\MeetLat}{{\mathbf{MLat}}}
\newcommand{\sSetCat}{{\mathbf{Set}}}
\newcommand{\sAb}{{\mathbf{Ab}}}
\newcommand{\sSh}{{\mathbf{Sh}}}
\newcommand{\sPrSh}{{\mathbf{PrSh}}}
\newcommand{\sEt}{{\mathbf{Et}}}
\newcommand{\sDLat}{{\mathsf{DLat}}}
\newcommand{\sPries}{{\mathbf{Pries}}}
\newcommand{\sStone}{{\mathbf{Stone}}}
\newcommand{\FSet}{{\mathbf{FSet}}}
\newcommand{\sRC}{{\mathsf{RC}}}
\newcommand{\smuDLat}{\rho{\mathbf{DLat}}}
\newcommand{\smuFDLat}{\rho{\mathbf{FDLat}}}
\newcommand{\sSp}{\mathbf{Sp}}

\newcommand{\bas}{{\bm *}}
\newcommand{\bphi}{{\bm{\phi}}}
\newcommand{\bpsi}{{\bm{\psi}}}
\newcommand{\obphi}{{\bm{{\phi_{\bm *}}}}}
\newcommand{\obpsi}{{\bm{{\psi_{\bm *}}}}}
\newcommand{\obzeta}{{\bm{{\zeta_{\bm *}}}}}
\newcommand{\obxi}{{\bm{{\xi_{\bm *}}}}}
\newcommand{\obeta}{{\bm{{\eta_{\bm *}}}}}
\newcommand{\obtheta}{{\bm{{\theta_{\bm *}}}}}
\newcommand{\iobphi}{{\bm{{\phi^{-1}_{\bm *}}}}}
\newcommand{\iobpsi}{{\bm{{\psi^{-1}_{\bm *}}}}}
\newcommand{\iobtheta}{{\bm{{\theta^{-1}_{\bm *}}}}}
\newcommand{\monZ}{{\mathds{Z}_2}}
\newcommand{\subF}{{\mathrm{sub_F}}}

\newcommand{\Chi}{\raise .75ex\hbox{$\chi$}}
\newcommand{\bU}{{\mathbf U}}
\newcommand{\bV}{{\mathbf V}}
\newcommand{\vv}{\textit{\textbf{v}}}
\renewcommand{\aa}{\textit{\textbf{a}}}
\newcommand{\pred}[1]{\overleftarrow #1}
\newcommand{\pr}[1]{\overleftarrow #1}
\newcommand{\scc}[1]{\overrightarrow #1}
\newcommand{\Succ}{^\to}
\newcommand{\zo}{$\{0,1\}$}
\newcommand{\down}{\downarrow\!}
\newcommand{\up}{\uparrow\!}
\newcommand{\birk}{\downarrow^\vee}
\newcommand{\idd}{\mathrm{id}}
\newcommand{\al}{\mathbf{\nu}}
\newcommand{\all}{\mathbf{\varrho}}
\newcommand{\allb}{\mathbf{\varrhob}}
\newcommand{\alb}{\bm{\mathbf{\nu}}}
\newcommand{\ji}{\mathbf{\mu}}
\newcommand{\jib}{\bm{\mathbf{\mu}}}
\newcommand{\pib}{\bm{\mathbf{\pi}}}
\newcommand{\varrhob}{\bm{\mathbf{\varrho}}}
\newcommand{\varpib}{\bm{\mathbf{\varpi}}}
\newcommand{\jio}{\mathbf{\rho}}
\newcommand{\alo}{\mathbf{\sigma}}
\newcommand{\two}{{\mathbf{2}}}
\newcommand{\Ob}{{\textnormal{Ob}}}
\newcommand{\cdm}{\textnormal{Cdm}}
\newcommand{\Img}{\textnormal{Im}}
\newcommand{\Idt}{\textnormal{Id}}
\newcommand{\smin}{\smallsetminus}
\newcommand{\im}{\textnormal{im~}}

\newcommand{\FPoset}{\mathbf{FPoset}}
\newcommand{\sPoset}{\mathbf{Poset}}
\newcommand{\ePoset}{\epsilon\mathsf{Poset}}
\newcommand{\eFPoset}{\epsilon\mathsf{FPoset}}

\renewcommand{\hat}{\widehat}


\newcommand{\ba}{{\mathsf a}}
\newcommand{\bb}{{\mathsf b}}
\newcommand{\bu}{{\mathsf u}}
\newcommand{\bs}{{\mathsf s}}
\newcommand{\bc}{{\mathsf c}}

\newcommand{\eproof}{\hfill{\vrule height5pt width5pt depth0pt}\medskip}
\newcommand{\rmref}[1]{{\rm(\ref{#1})}}
\newcommand{\vgln}{\lb\begin{array}{rcl}}
\newcommand{\eindvgln}{\end{array}\right.}
\newcommand{\alphaOg}{\alpha_{\rm o}(\gamma_x^-)}
\newcommand{\balphaO}{\balpha_{\rm o}}
\newcommand{\alphaO}{\alpha_{\rm o}}
\newcommand{\omegaOg}{\omega_{\rm o}(\gamma_x^-)}
\newcommand{\bomegaO}{\bomega_{\rm o}}


\def\mapright#1{\stackrel{#1}{\longrightarrow}}
\def\mapleft#1{\stackrel{#1}{\longleftarrow}}
\def\mapdown#1{\Big\downarrow\rlap{$\vcenter{\hbox{$\scriptstyle#1$}}$}}
\def\mapup#1{\Big\uparrow\rlap{$\vcenter{\hbox{$\scriptstyle#1$}}$}}
\def\mapne#1{\nearrow\rlap{$\vcenter{\hbox{$\scriptstyle#1$}}$}}
\def\mapse#1{\searrow\rlap{$\vcenter{\hbox{$\scriptstyle#1$}}$}}
\def\mapsw#1{\swarrow\rlap{$\vcenter{\hbox{$\scriptstyle#1$}}$}}
\def\mapnw#1{\nwarrow\rlap{$\vcenter{\hbox{$\scriptstyle#1$}}$}}


\newcommand{\symindex}[1]{{\index{#1}}}

\newcommand{\isdef}{\stackrel{\rm def}{=}}
\newcommand{\opensubset}{\stackrel{\rm open}{\subset}}
\newcommand{\conv}{{\hbox{conv}\,}}
\newcommand{\dist}{{\rm dist}\,}
\newcommand{\rank}{\hbox{\rm rank}\,}
\newcommand{\res}{{\rm res}}
\newcommand{\covv}{{\rm cov}}
\newcommand{\cl}{{\rm cl}\,}
\newcommand{\interior}{\hbox{\rm int}\,}
\newcommand{\diam}{{\rm diam}\,}
\newcommand{\Int}{{\rm int\,}}
\newcommand{\Inv}{\mbox{\rm Inv}}  
\newcommand{\bd}{\partial}
\newcommand{\f}{\varphi}
\newcommand{\ft}{\overline{\varphi}}
\newcommand{\supp}[1]{\left| {#1}\right|}
\newcommand{\col}{{\rm col}\,}
\newcommand{\ord}{{\rm Ord}}
\newcommand{\lhom}{{\rm LHom}}

\newcommand{\half}{{\textstyle{1\over 2}}}
\newcommand{\quarter}{{\textstyle{1\over 4}}}
\newcommand{\threequarters}{{\textstyle{3\over 4}}}
\newcommand{\threesixteen}{{\textstyle{3\over 16}}}

\newcommand{\cFk}{{\mathsf f}^{(0,k]}}
\newcommand{\cFkk}{{\mathsf f}^{[0,k]}}
\newcommand{\mb}{{\mbox{-}}}

\newcommand{\codim}{\mathop{\rm codim }}
\renewcommand{\span}{\mathop{\rm span}}

\renewcommand{\star}{\mathop{\rm star }}
\newcommand{\hull}{\mathop{\rm hull }}
\newcommand{\id}{\mathop{\rm id }\nolimits}
\newcommand{\image}{\mathop{\rm image }\nolimits}
\newcommand{\gker}{\mathop{\rm gker }\nolimits}
\newcommand{\Ave}{\mathop{\rm Ave}\nolimits}
\newcommand{\Con}{\mathop{\rm Con}\nolimits}
\newcommand{\sgn}{\mathop{\rm sgn }\nolimits}
\newcommand{\spec}{\mathop{\rm spec }\nolimits}
\newcommand{\cov}{\mathop{\rm cov}}

\newcommand{\commD}{\todo[inline, color=blue!40]}
\newcommand{\commK}{\todo[inline, color=green!40]}
\newcommand{\commV}{\todo[inline, color=red!40]}

\newcommand{\btau}{\bm{\tau}}
\newcommand{\bmu}{\bm{\mu}}
\newcommand{\biota}{\bm{\iota}}
\newcommand{\bpi}{\bm{\pi}}


\newcommand{\shAtt}{\mathscr{Att}}
\newcommand{\shC}{\mathscr{C}}
\newcommand{\shD}{\mathscr{D}}

\title{  Continuation sheaves in dynamics: sheaf cohomology\\ and bifurcation\thanks{The work of W.D.K.\ was partially supported by the Army Research Office under award W911NF1810306. }}


\titlerunning{Continuation sheaves in dynamics}        

\author{K. Alex Dowling         \and
        William D. Kalies \and \\
        Robert C.A.M. Vandervorst 
}


\institute{K. Alex Dowling \at
              Rutgers University \\
              \email{kad378@scarletmail.rutgers.edu}           
           \and
           William D. Kalies \at
              Florida Atlantic University\\
              \email{wkalies@fau.edu}
              \and
              Robert C.A.M. Vandervorst \at
              VU University\\
              \email{r.c.a.m.vander.vorst@vu.nl}
}

\date{Version date: \today}

\maketitle

\begin{abstract}
Continuation of algebraic structures in families of dynamical systems is described using category theory, sheaves, and lattice algebras. Well-known concepts in dynamics, such as attractors or invariant sets, are formulated as functors on appropriate categories  of dynamical systems mapping to categories of lattices, posets, rings or abelian groups. Sheaves are constructed from such functors, which encode data about the continuation of structure as system parameters vary. Similarly, morphisms for the sheaves in question arise from natural transformations. This framework is applied to a variety of lattice algebras and ring structures associated to dynamical systems, whose algebraic properties carry over to their respective sheaves. Furthermore, the cohomology of these sheaves are algebraic invariants which contain information about bifurcations of the parametrized dynamical systems.
\end{abstract}


\section{Introduction}
\label{sec:intro}

Dynamical systems theory is the study of the structure of invariant sets, particularly those sets that govern the long-term behavior of a system.  
As a dynamical system is perturbed, its global dynamics can change qualitatively through bifurcation.  
The global dynamics of a system can be described by the structure of its attractors. Indeed, in his fundamental decomposition theorem, Conley \cite{Conley} uses the set of all attractors in a system to establish a global decomposition into minimal (chain)-recurrent components and connecting orbits between them. 
The algebraic structure that underlies this decomposition is codified in the fact that the set of all attractors naturally forms a bounded, distributive lattice \cite{KMV-1a}. In a series of papers \cite{KMV-0,KMV-1a,KMV-1b,KMV-1c,KastiKV,GKV} the theoretical framework for such dynamically meaningful algebraic structures has been developed. This framework has been used to design algorithms to compute global dynamical information rigorously for explicit maps, cf.~\cite{database}. 
To understand dependence on parameters in dynamical systems we develop
a natural mathematical framework to capture this variation.
For a parametrized family of dynamical systems, structures like the lattice of attractors at a fixed parameter value may vary dramatically under small perturbations. Indeed, bifurcations can occur on a Cantor set with positive measure in the parameter space.
Continuation relates these structures for varying parameter values. 

Continuation of isolated invariant sets has been the central theme in Conley index theory. Conley \cite{Conley} and Montgomery \cite{Mont}, and later Salamon \cite{Salamon}, formulate continuation in terms of the \emph{space of isolated invariant sets}:
\[
\mathrm{\mathrm{\Pi}}[\sIsol] := \Bigl\{ (\phi, S) ~\big|~ \phi \text{ a flow on } X \text{ and } S \text{ an isolated invariant set for } \phi \Bigr\}.
\]
Two pairs are "close" in the space of isolated invariant sets if the flows are "close" and the isolated invariant sets share a common isolating neighborhood. Two isolated invariant sets are related by continuation if they lie in the same quasicomponent of the space of isolated invariant sets. Both Montgomery and Salamon give proofs of the following crucial result:

\centerline{
\emph{Two invariant sets related by continuation share the same Conley index.}}

\noindent Montgomery proves this in the language of \etale spaces (sheaf theory). 
By choosing an appropriate topology on $\mathrm{\Pi}[\sIsol]$ the map
\[
(\phi,S) \xmapsto{\pi} \phi,
\]
where the flow $\phi$ is an element of a topological space of flows on $X$, is a local homeomorphism.
With it, he constructs a long exact sequence of \etale spaces relating the cohomology of the invariant sets, the index of invariant sets, and the cohomology of some asymptotic sets. 
Montgomory's approach is not to be confused with sheaf cohomology.
By contrast, Salamon provides a direct flow-defined homotopy to establish the invariance of the Conley index. This avoids much of the algebraic topological machinery necessary for Montgomery's approach. Salamon's explicit method appeals to an analytic perspective of dynamics.
The latter approach does  not suffice for understanding the continuation of algebraic structures such as lattices of attractors or Morse representations.

In \cite{fran3} Franzosa extends the approach of Kurland for attractor-repeller pairs, cf.\ \cite{Kurland}, to develop an intricate theory of continuation of Morse decompositions. The idea is to augment the concept of the space of isolated invariant sets to a space of ``Morse decompositions" with an appropriate topology. 
Our interest in data-driven dynamics, and the recent success of sheaf theory in topological data analysis, motivate us to revisit Montgomery's viewpoint. We acknowledge this increased algebraic topology overhead with the confidence that this abstract formulation will yield a comprehensive computational framework for continuation.
The abstract formalism developed in this paper gives an elementary treatment of the continuation theory of Morse representations extending the theory by Franzosa which makes it applicable to techniques from sheaf theory such as sheaf cohomology.

In a series of papers, cf.\ \cite{KMV-1a,KMV-1b,KMV-1c}, we developed an algebraic theory of attractors via distributive lattice theory. We use attractors as the starting point of our approach.
In Diagram \ref{fig:outline}[left] below we outlined the algebraic theory of attractors. Diagram \ref{fig:outline}[middle] reformulates the left diagram in terms of functors on appropriately chosen categories of dynamical systems and Diagram \ref{fig:outline}[right] gives the associated \etale spaces.
    \begin{equation}\label{fig:outline}
    \begin{tikzcd}
    \sANbhd(\phi)\arrow[d,"\omega_\phi"]\arrow[r,"{}^c",leftrightarrow] & \sRNbhd(\phi) \arrow[d,"\alpha_\phi"]\\ \sAtt(\phi)\arrow[r,"{}^*",leftrightarrow] & \sRep(\phi)
    \end{tikzcd}\;
    \begin{tikzcd}
    \sANbhd\arrow[d,"\omega", Rightarrow]\arrow[r,"{}^c", Leftrightarrow] & \sRNbhd \arrow[d,"\alpha", Rightarrow] \\ \sAtt\arrow[r,"{}^*", Leftrightarrow] & \sRep
    \end{tikzcd}\;
    \begin{tikzcd}
    \mathrm{\Pi}[\sANbhd]\arrow[d, "{\mathrm{\Pi}[\omega]}"]\arrow[r, "{\mathrm{\Pi}[{}^c]}",leftrightarrow] & \mathrm{\Pi}[\sRNbhd] \arrow[d, "{\mathrm{\Pi}[\alpha]}"] \\ \mathrm{\Pi}[\sAtt]\arrow[r, "{\mathrm{\Pi}[{}^*]}",leftrightarrow] & \mathrm{\Pi}[\sRep]
    \end{tikzcd}
    \end{equation}
The first diagram [left], which was established in  \cite[Diag.\ (1)]{KMV-1a}, appears categorically in the second diagram [middle], and then in the associated \etale spaces [right].
For example, the space $\mathrm{\Pi}[\sAtt]$ is the space of points
$(\phi,A)$, where $A\in \sAtt(\phi)$ is an attractor. The latter is the analogue of the space of isolated invariant sets. 
Diagram  \eqref{fig:outline}[right] allows us to define a \emph{sheaf of attractors} over the space of dynamical systems, cf.\ Fig.\ \ref{fig:sec1}.
In the first sections of the paper we develop the categorical theory of continuation for contravariant functors. We can apply this theory in various dynamical settings such as the lattice of attractors, the semi-lattice of isolated invariant sets, but also the (non-distributive) lattice of Morse representations. 
For the latter
 we adopt the algebraic treatment of Morse representations and decompositions introduced in \cite[Def.\ 7]{KMV-1c},  which repairs the classical approach of labeling of invariant sets.
%
Morse decompositions are formulated as an order embedding from a Morse representation into a finite poset. By applying the categorical theory to Morse representations and studying the resulting sheaves we obtain a generalization of 
Franzosa's theory of continuation of Morse decompositions. 
In the various settings we can define associated sheaves, e.g.\ the sheaf of attractors and the sheaf of Morse representations. If we invoke sheaf cohomology the algebraic structures of attractors and Morse representations can be used to define new invariants. We investigate sheaf cohomology in the setting of bifurcation theory as an illustration.
With the strides made in computational  dynamics 
and the success of sheaf theory in topological data analysis (cellular sheaves), we have the necessary prerequisites to achieve a computational method for modeling continuation.


%

%


As a first step, the focus of this paper is to construct and study sheaves which encode the continuation of structures in dynamics. The first seven sections of the paper detail an abstract approach to building sheaves for an arbitrary structure. We routinely return to attractors to showcase how this approach may be applied.

Our first goal is to formulate algebraic structures in dynamics as functors. Section \ref{sec:DSCat} equips dynamical systems with the compact-open topology, and then with a categorical structure using the notion of topological conjugacies. This yields the domain category for the aforementioned functors, and a topology to attach algebraic information to with sheaves. 
Section \ref{sec:AttFun} then explicitly shows that the attracting neighborhood lattice $\sANbhd(\phi)$ and the attractor lattice $\sAtt(\phi)$ can be expressed as functors from the category of dynamical systems to a category of lattices. These constitute an example to which we apply the later theory. 
 Section \ref{sec:ShStr} gives prerequisites for continuation: A category with a topology on the objects, and a pair of functors we call a "continuation frame". We prove the existence of an \etale space encoding continuation for these functors. The end of the section constructs morphisms of the \etale spaces using natural transformations between the corresponding functors. 
Section \ref{contatt12} first applies the framework built in Section \ref{sec:ShStr} to the attractor case from Section \ref{sec:AttFun}. This yields the \etale space of attractors. Furthermore, we formulate a morphism of \etale spaces from the dual repeller operator, seen in Fig.\ \ref{fig:outline}. Lastly, we describe the functorial setup for finite sublattices of attractors and Morse representations. This will eventually define a Morse representation sheaf.
 While Section \ref{sec:ShStr} produces \etale spaces for attractors, repellers, etc., in Section \ref{sec:algconstr} we port over their algebra. We begin by equipping the attractor and repeller \etale spaces with the binary lattice operations. Then, the Conley form on attractors is stated on the level of the attractor \etale space. Later, when we discuss sheaf cohomology, the Conley form will be a crucial tool in building sheaves in an abelian category. The end of Section \ref{sec:algconstr} expands on this, detailing how the algebra of attractors can be stored in a ring. 
Section \ref{sec:shofsec} passes through the equivalence between \etale spaces and sheaves. \emph{From an abstract continuation frame we build a sheaf which encodes the continuation of the unstable structure.} The attractor functor begets an \emph{attractor lattice sheaf}, and the Conley form becomes a morphism of sheaves. We also discuss the functors built at the end of Section \ref{sec:algconstr}, which give us $\sRing$-valued sheaves storing the continuation of attractors. To conclude the section we consider the sheaf of finite attractor sublattices, and the sheaf of Morse representations, as set up in Section \ref{contatt12}. 
\vspace{2ex}
\begin{figure}[h!]
\scalebox{1.25}{
\begin{picture}(300,135)
\put(55,20){\includegraphics[width=6.7cm]{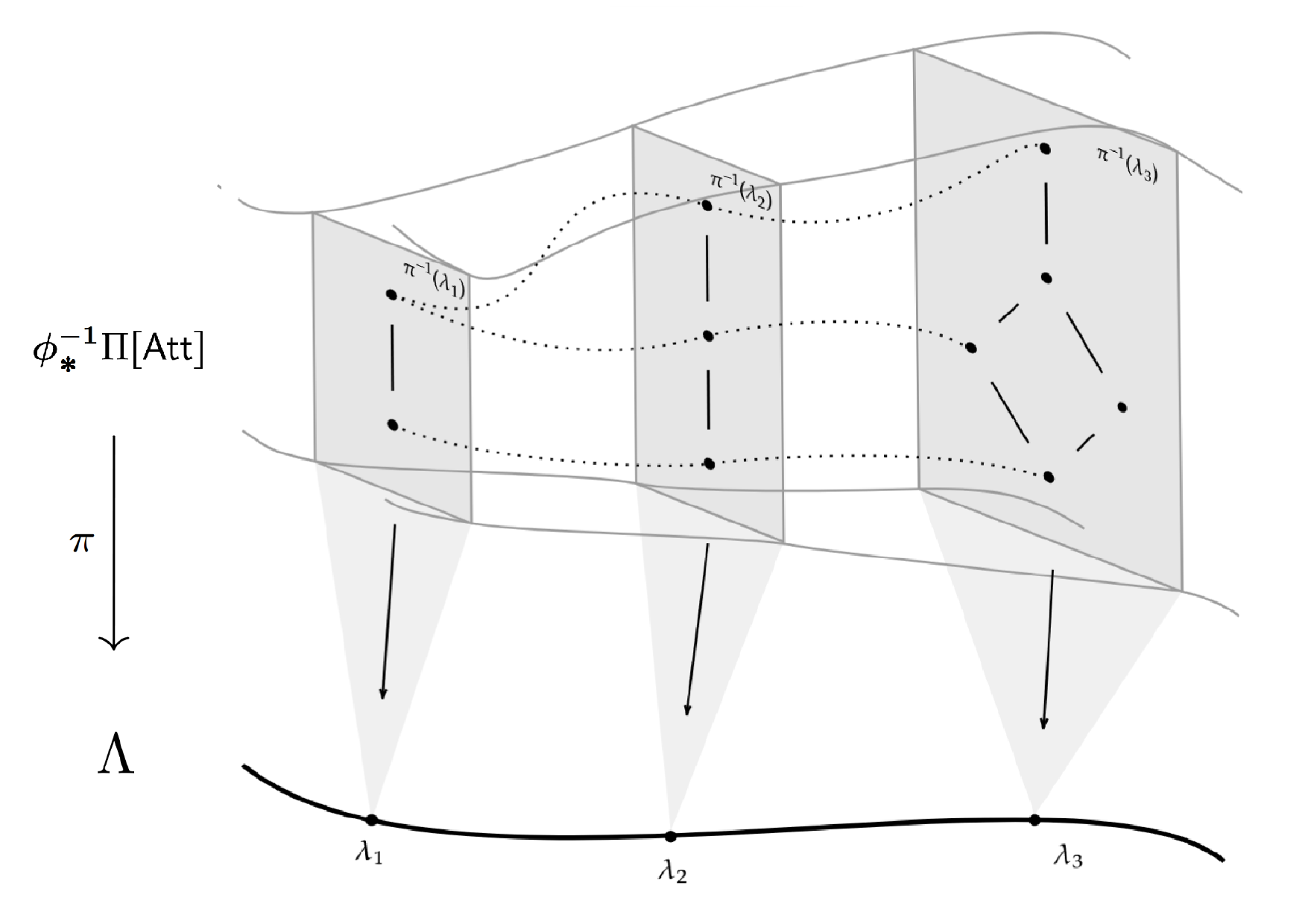}}
\end{picture}
}
\vspace{-4ex}
\caption{ Given a parametrized dynamical system $\obphi$, we construct an \etale space of attractors $\iobphi\mathrm{\Pi}[\sAtt]$ over parameter space, cf.\ Sect.\ \ref{sn1}. The fiber at a parameter value $\lambda\in \mathrm{\Lambda}$ is the attractor lattice of $\phi_\lambda$. Global sections are illustrated by dotted lines. The failure of these global sections to reach all attractors can be measured using sheaf cohomology.
}
\label{fig:sec1}
\end{figure}    
\vspace{-4ex}
In the last three sections of the paper, we use the continuation sheaves to study bifurcations in parametrized dynamical systems.
Section \ref{sec:bifs} pulls the continuation sheaves back to parameter space for any given parametrized dynamical system. 
For a topological space $\mathrm{\Lambda}$ we define
a \emph{parametrized dynamical system} on $X$ as a continuous map
$\bphi\colon \T\times X\times \mathrm{\Lambda} \to X$ such that 
$\phi^{\lambda} := \bphi(\cdot,\cdot,\lambda)$ is dynamical system on $X$ for all $\lambda\in \mathrm{\Lambda}$.
The map
$\lambda \xmapsto{\obphi} \phi^\lambda$, called the \emph{transpose}, is a continuous map without additional assumptions on the topological spaces $\mathrm{\Lambda}$ and $X$.
In
Section \ref{ssc:conjinvariance} we show that the continuation of attractors is conjugacy invariant.
\vskip.2cm
\noindent {\textbf{Theorem}}~(Conjugacy Invariance Theorem, cf.\ Thm.\ \ref{thm:CIT})
{\em
Let $X$, $Y$ be 
compact metric spaces. Suppose $\obphi\colon \mathrm{\Lambda} \to \sDS(\T, X)$ and $\obpsi\colon \mathrm{\Lambda} \to \sDS(\T, Y)$ are conjugate parametrized dynamical systems.
Then, the \etale spaces $\iobphi\mathrm{\Pi}[\sAtt]$ and $\iobpsi\mathrm{\Pi}[\sAtt]$ are homeomorphic.
}

%
Section \ref{ssc:examples} applies the continuation sheaves to bifurcations. A stable parametrization is shown to produce locally constant sheaves. 
The pull-back for the attractor sheaf is denoted by $\scrA^{\obphi}$. In terms of sheaf cohomology with respect to $\scrA^{\obphi}$ we obtain the following result:
\vskip.2cm
\noindent {\textbf{Theorem}}~(cf.\ Thm.\ \ref{relshco3})
{\em
Let $\mathrm{\Lambda}$ be both contractible and locally contractible, and let
 $\mathrm{\Lambda}'\subset \mathrm{\Lambda}$ be a deformation retract of $\mathrm{\Lambda}$ 
with  $\obphi$  stable on $\mathrm{\Lambda}'$.
Suppose that 
\[
H^k(\mathrm{\Lambda}, \mathrm{\Lambda}';\scrA^\obphi)\neq0,
\quad\text{ for some }\quad k\ge 0.
\]
Then, there  exist a bifurcation point in $\lambda_0 \in \mathrm{\Lambda}\smin \mathrm{\Lambda}'$.
}
\vskip.2cm

%
Section \ref{sec:exonepar} computes attractor sheaf cohomology for the pitchfork, saddle-node, transcritical, and S-shaped bifurcations. 
\vskip.2cm

\noindent {\textbf{Theorem}}~(cf.\ Thm.\ \ref{thm:pitchchm})
{\em
Let $\obphi$ be a parametrized dynamical system over $\R$ with a pitchfork bifurcation at $\lambda_0$. Then,
\[
\scrA^\obphi ~\text{~ is acyclic and ~}~ H^0(\mathrm{\Lambda};\scrA^\obphi) \cong\Z_2^3.
\]
Moreover,
there exists a value $\lambda_0\in \R$ such that
\[
H^k\bigl(\mathrm{\Lambda}, \mathrm{\Lambda}';\scrA^\obphi\bigr) \cong \begin{cases} \Z_2^2 &\textnormal{if } k = 1\textnormal{ and } a>\lambda_0; \\
0 & \textnormal{if $k\neq  1$ or $a\le \lambda_0$,} \end{cases}
\]
where $\mathrm{\Lambda}' = [a,\infty)$,
Furthermore, for $\mathrm{\Lambda}' := (-\infty,a]$, then
$H^k\bigl(\mathrm{\Lambda}, \mathrm{\Lambda}';\scrA^\obphi\bigr) \cong 0$ for all $k$ and for all $a\in \R$.
}
\vskip.2cm
Different types of bifurcations can have different cohomology in their attractor sheaves, but if two systems experience the same type of bifurcation, the cohomology is isomorphic. We propose this as a tool for classifying bifurcations, in much the same way singular homology classifies topological spaces. We believe this invariant to be computable by utilizing the existing theory for combinatorial dynamics \cite{KMV-1b,database} and cellular sheaf cohomology \cite{curry2015discrete}. This will be the subject of future work.

\section{Categories of dynamical systems}
\label{sec:DSCat}

Throughout this paper we use the following
definition of dynamical system.
We give spaces of
dynamical systems a categorical  as well as a topological structure as outlined below.

\begin{definition}
\label{defn:local-dyn}
Let $(X,\scrT)$ be a topological space and let $\T$ be the (additive) topological  monoid (or group) with topology $\scrT_\T$.
A \emph{dynamical system} is a continuous map 
$\phi\colon \T \times X \to X$ that satisfies
\begin{enumerate}
\item [(i)] $\phi(0,x) = x$ for all $x\in X$;
\item [(ii)] $\phi(t,\phi(s,x)) = \phi(t+s,x)$ for all $s,t\in \T$ and all $x\in X$.
\end{enumerate}
The set of all dynamical systems on the {\em phase space} $X$ with {\em time space} $\T$
is denoted by $\sDS(\T,X)$. \label{pg:dscat} Also, $\phi(t,x)$ may be denoted $\phi_t(x).$
\end{definition}

The time space $\T$ is either $\Z$, $\Z^+$, $\R$, or $\R^+$. \label{pg:timespace}
In applications, for example those arising from differential equations, it is common for the topology on $\T=\R$ (or $\R^+$) to be the standard topology, which we assume throughout the rest of this paper, but certain results do not require the topology $\scrT_\T$ to have specific properties. 
Certain properties of the phase space topology $\scrT$ do play a crucial role. In particular, for clarity of the presentation of the main ideas of this paper, \emph{we always consider the phase space $X$ to be a compact topological space}. For some results, such as Theorem \ref{thm:CIT}, we additionally assume a metric on $X$. Such restrictions are explicitly stated and explained when needed.

We endow $\sDS(\T,X)$ with a suitable topology. One natural choice arises by viewing $\sDS(\T,X)$ as a function space with the compact-open topology, i.e. the topology generated by the subbasis of sets of the form 
\[
\bigl\{\phi~|~\phi(K) \subset U\hbox{ for $K$ compact in $\T \times X$ and $U$ open in $X$}\bigr\}
\]
by varying pairs $(K,U)$.

Next we endow $\sDS(\T,X)$  with a categorical structure  and  refer to   $\sDS(\T,X)$ as the \emph{category of dynamical systems on $X$ over $\T$}. An object $\phi \in \sob(\sDS(\T,X))$ is a dynamical system $\phi\colon \T\times X \to X$. A morphism in $\Hom(\phi,\psi)$ is defined as $\tau\times h$ such that
\begin{enumerate}
    \item [(i)] $h\colon X\to X$ is a continuous map;
    \item [(ii)] $\tau\colon \T\times X \to \T$ is a continuous reparametrization 
that is strictly monotone and  bijective for each $x$ and  satisfies $\tau(0,x) =0$; 
\item [(iii)] the following diagram commutes. cf.~\cite{deVries},
\begin{center}
\begin{tikzcd}
\T\times X \arrow[d,"\tau\times h"]\arrow[rr,"\phi"] && X\arrow[d,"h"] \\
\T\times X 
\arrow[rr,"\psi"] && X
\end{tikzcd}
\end{center}
\end{enumerate} 
 We refer to such a morphism $\tau\times h$ as a (topological) quasiconjugacy. Note that $\hom(\phi,\psi)$ can also be endowed with the compact-open topology, so that both the objects and the hom-set of $\sDS(\T, X)$ are topological spaces.
We abuse notation so that  an open subset $\mathrm{\Omega}\subset\sob(\sDS(\T,X))$ is referred to as an open set $\mathrm{\Omega}$ in $\sDS(\T,X)$.

\begin{remark}
Note that the conditions on reparametrizations imply that $\tau=\id$ in the case that $\T=\Z$, cf.\ \cite[II(7.2)]{Vries2}. This in part motivates the terminology of quasiconjugacy. When $\T=\R$, the case $h=\id$ yields a reparametrization  of time $\tau(t,x)$.
\end{remark}

\begin{remark}\label{rem:DS}
For special subsets of dynamical systems, such as smooth flows on a manifold, topologies other than the compact-open topology 
may be more appropriate.
\emph{For clarity of presentation, we use the notation  $\sDS(\T,X)$ to mean that the objects and morphisms of this category are given the compact-open topology.}
However, in other cases, similar results to those obtained for $\sDS(\T,X)$  follow from the abstract theory presented in Section~\ref{sec:ShStr}.   
\end{remark}

\begin{remark}\label{rem:hom}
More restrictive choices of the set of morphisms lead to  subcategories. For example, one may consider from least restrictive to most restrictive: topological (semi)equivalence with reparametrization of time, topological (semi)conjugacy, or no structure on the morphism set, ie. $\Hom(\phi,\phi)=\{{\rm id}\times{\rm id}\}$ and $\Hom(\phi,\psi)=\emptyset$ when $\phi\neq\psi$, cf.~\cite{deVries,Robinson}.
\end{remark}


\section{Functoriality of dynamics}

\label{sec:AttFun}

The study of a dynamical system often focuses on the properties of its invariant sets. A subset $S\subset X$ is {\em invariant} if it is the union of complete orbits, or equivalently $\phi_t(S)=S$ for all $t\in\T$. One of the most important classes of invariant sets are the attractors. In \cite{KMV-1a}, it is shown that the set of attractors has the algebraic structure of a bounded, distributive lattice.
In this section, we characterize such algebraic structures in terms of functors on the category of dynamical systems.

For a given dynamical system $\phi\colon \T\times X \to X$ and a subset $U\subset X$, the {\em maximal invariant set in $U$} is
\[
\Inv_\phi(U) := \bigcup\{S\subset U : \phi(t, U) = U \textnormal{ for all } t\in \T^+\}.
\]\label{pg:inv}
The {\em omega-limit set} of $U$ is defined by
\[
\omega_\phi(U) := \bigcap_{t\ge 0} \cl \bigcup_{s\ge t} \phi_s(U).
\]\label{pg:omegalimit}
Recall from \cite{KMV-1a} some properties of $\omega_\phi(U)$.
\begin{enumerate}
    \item[(i)] $\omega_\phi(U)$ is compact, closed, and nonempty whenever $U\neq \emptyset$,
    \item[(ii)] $\omega_\phi(U)$ is an invariant set,\footnote{For a forward invariant set $U$, i.e. $\phi_t(U)\subset U$ for all $t\ge 0$, it holds that $\Inv_\phi(\cl U) = \omega_\phi(U)$.} 
    \item[(iii)] $\omega_\phi\bigl(\omega_\phi(U)\bigr) = \omega_\phi(U)$,
    \item[(iv)] $\omega_\phi(\cl U)=\omega_\phi(U)$,
    \item[(v)]$\omega_\phi(U\cup V) = \omega_\phi(U) \cup \omega_\phi(V)$.
\end{enumerate}

A subset $U\subset X$ is called an {\em attracting neighborhood} if $\omega_\phi(U)
\subset \Int U$. Attracting neighborhoods form a bounded, distributive lattice denoted by $\sANbhd(\phi)$. The binary operations are $\cap$ and $\cup$, see \cite{KMV-1a}. \label{pg:anbhd}
A subset $A\subset X$ is called an {\em attractor} if there exists an attracting neighborhood $U\subset X$ such that $A=\omega_\phi(U)$, which is a neighborhood of $A$ by definition. Attractors are compact, closed invariant sets, and the set of all attractors is a bounded, distributive lattice $\sAtt(\phi)$ with binary operations: $A\vee A'=A\cup A'$ and
$A\wedge A' := \omega_\phi(A\cap A')$, cf.\ \cite{KMV-1a}. \label{pg:att}

\begin{remark}
In the above listed properties of omega-limit sets and attractors, the compactness of $X$ is crucial. If we drop the compactness assumption on $X$, some of the properties, such as invariance and idempotency, do not hold in general.
\end{remark}

When the spaces $\T,X$ are fixed, we often write $\sDS$ in place of $\sDS(\T,X).$
The categorical structure of $\sDS$ can now be used to reformulate the above lattices in terms of functors.
For notational convenience we write $\psi_t^\dagger := \psi(\tau(t,\cdot),\cdot)$

\begin{lemma}\label{lem:finv}
Let $\phi,\psi\in \sob(\sDS)$
and let  $\tau\times h\in \Hom(\phi,\psi)$. Then, for all $U\subset Y$  we have
\[
\phi_t(h^{-1}(U))\subset h^{-1}(\psi^\dagger_{t}(U))\quad \forall t\ge 0.
\]
In particular, 
\begin{equation}
\label{pullbackatt2}
\omega_\phi(h^{-1}(U))=\omega_\phi(h^{-1}(\omega_\psi(U)))\subset h^{-1}(\omega_\psi(U)).
\end{equation}
\end{lemma}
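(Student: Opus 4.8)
The plan is to read the first inclusion directly off the commuting square that defines the morphism $\tau\times f$, and then to obtain \eqref{pullbackatt2} by pushing the resulting identity forward through the continuous map $f$, using compactness of $X$ to control the reparametrization $\tau$ uniformly in the phase variable.

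First I would record the pointwise content of the diagram: for every $x\in X$ and $t\in\T$,
\[
f(\phi_t(x)) = \psi\bigl(\tau(t,x),f(x)\bigr).
\]
Since $\tau(0,x)=0$ and $\tau(\cdot,x)$ is strictly increasing, $t\ge 0$ forces $\tau(t,x)\ge 0$; hence for $x\in f^{-1}(U)$ the point $f(\phi_t(x))=\psi_{\tau(t,x)}(f(x))$ lies in $\psi^\dagger_t(U)$, which is exactly the asserted inclusion $\phi_t(f^{-1}(U))\subset f^{-1}(\psi^\dagger_t(U))$.

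For the inclusion $\omega_\phi(f^{-1}(U))\subset f^{-1}(\omega_\psi(U))$ I would instead prove the equivalent statement $f\bigl(\omega_\phi(f^{-1}(U))\bigr)\subset\omega_\psi(U)$. Fix $a\in\omega_\phi(f^{-1}(U))$ and a threshold $M\ge 0$. The main obstacle is a uniformity statement about $\tau$: because $\tau$ is jointly continuous and each $\tau(\cdot,x)$ is a strictly increasing bijection with $\tau(0,x)=0$, the level function $x\mapsto\sigma_M(x):=\tau(\cdot,x)^{-1}(M)$ is continuous, so on the compact space $X$ it attains a finite maximum $T_M$; consequently $s\ge T_M$ implies $\tau(s,x)\ge M$ for all $x$. (In the discrete case $\tau=\id$ and this is automatic.) Granting this, $a\in\cl\bigcup_{s\ge T_M}\phi_s(f^{-1}(U))$, and applying $f$, using continuity in the form $f(\cl E)\subset\cl f(E)$ together with the pointwise identity above, gives
\[
f(a)\in\cl\bigcup_{s\ge T_M} f\bigl(\phi_s(f^{-1}(U))\bigr)\subset\cl\bigcup_{r\ge M}\psi_r(U).
\]
Intersecting over $M\to\infty$ yields $f(a)\in\omega_\psi(U)$, which is the desired inclusion.

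Finally I would derive the equality from this inclusion and the listed properties of $\omega$. The direction $\omega_\phi(f^{-1}(U))\subset\omega_\phi(f^{-1}(\omega_\psi(U)))$ follows by applying the monotone, idempotent operator $\omega_\phi$: using property (iii), $\omega_\phi(f^{-1}(U))=\omega_\phi\bigl(\omega_\phi(f^{-1}(U))\bigr)\subset\omega_\phi\bigl(f^{-1}(\omega_\psi(U))\bigr)$. For the reverse direction I would use that $U$ is an attracting neighborhood, so $\omega_\psi(U)\subset\Int U\subset U$; then $f^{-1}(\omega_\psi(U))\subset f^{-1}(U)$ and monotonicity of $\omega_\phi$ gives $\omega_\phi(f^{-1}(\omega_\psi(U)))\subset\omega_\phi(f^{-1}(U))$. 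I expect essentially all the genuine work to sit in the compactness-based uniform estimate on $\tau$; I would also flag that the reverse inclusion truly requires $\omega_\psi(U)\subset U$, since for an arbitrary subset $U$ the two omega-limits need not agree.
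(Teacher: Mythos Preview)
Your argument for the first displayed inclusion is the paper's, read pointwise from the commuting square.

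For the containment $\omega_\phi(f^{-1}(U))\subset f^{-1}(\omega_\psi(U))$ you and the paper take different routes. The paper stays on the $X$ side: it feeds $\phi_s(f^{-1}(U))\subset f^{-1}(\psi^\dagger_s(U))$ into the definition of $\omega_\phi$, uses $\cl f^{-1}(E)\subset f^{-1}(\cl E)$ and $\bigcap f^{-1}=f^{-1}\bigcap$, and then reindexes the outer intersection via invertibility of $\tau(\cdot,x)$. You instead push forward to $Y$, proving $f\bigl(\omega_\phi(f^{-1}(U))\bigr)\subset\omega_\psi(U)$ from $f(\cl E)\subset\cl f(E)$ and the pointwise identity. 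Both arguments need a uniform-in-$x$ lower bound on $\tau(s,x)$ as $s\to\infty$; you make this explicit via compactness of $X$, whereas the paper hides it in the phrase ``invertibility of the parametrization function $\tau$.'' Your version is more transparent about where compactness enters.

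Your flag about the equality is well placed. The paper closes with
\[
\omega_\phi(f^{-1}(U))=\omega_\phi\bigl(\omega_\phi(f^{-1}(U))\bigr)\subset\omega_\phi\bigl(f^{-1}(\omega_\psi(U))\bigr)\subset\omega_\phi(f^{-1}(U)),
\]
so your first direction (idempotency plus monotonicity) matches exactly. The last inclusion, however, is asserted without argument, and it fails for general $U\subset Y$: take $X=\{p\}$ with trivial flow, $Y=[0,1]$ with $\psi_t(y)=e^{-t}y$, $f(p)=0$, $\tau=\id$, and $U=\{1\}$. Then $f^{-1}(U)=\varnothing$, so $\omega_\phi(f^{-1}(U))=\varnothing$, whereas $f^{-1}(\omega_\psi(U))=f^{-1}(\{0\})=\{p\}$ and $\omega_\phi(\{p\})=\{p\}$. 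Thus the equality in \eqref{pullbackatt2} genuinely requires $\omega_\psi(U)\subset U$, exactly as you say; this is the only case (attracting neighborhoods and attractors) in which the identity is subsequently used.
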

\proof
See Appendix~\ref{sec:app1}.
\eproof

Now suppose we have $\tau \times h\in\Hom(\phi,\psi)$ and $U\in \sANbhd(\psi)$. Then, by Lemma \ref{lem:finv}, 
$$
\omega_\phi(h^{-1}(U)) \subset h^{-1}(\omega_\psi(U)) \subset h^{-1}(\interior(U))  
\subset \interior(h^{-1}(U)),
$$
where the latter inclusion follows from the continuity of $f$.
Therefore $h^{-1}(U)\in \sANbhd(\phi),$ and the inverse image operator induces a well-defined map $h^{-1}\colon \sANbhd(\psi)\to \sANbhd(\phi)$. 
This map is in fact a homomorphism by the properties of inverse images, since the lattice operations on $\sANbhd(\phi)$ and $\sANbhd(\psi)$ are union and intersection, so using functor notaton, $\sANbhd(\tau\times h) = h^{-1}$. Thus, by assigning to each dynamical system its attracting neighborhood lattice and to each morphism its inverse image operator, by the properties of inverse images and Lemma~\ref{lem:finv}, we have a contravariant functor, $\sANbhd\colon \sDS \to \sBDLat$, from the category of dynamical systems  to the category of bounded, distributive lattices. 

\begin{remark} \label{rem:ablock}
\label{attextviabl}
A neighborhood $U\in\sANbhd(\phi)$ is an {\em attracting block} if
$\phi_t(\cl U)\subset\Int(U)$
for all $t>0$.
Now suppose $\tau \times h\in\Hom(\phi, \psi)$ and  $U\in \sABlock(\psi)$. Then for all $t>0$
\[
\begin{aligned}
\phi_t\bigl(\cl(h^{-1}(U))\bigr)&\subset \phi_t\bigl(h^{-1}(\cl U)\bigr) \subset  h^{-1}\bigl(\psi^\dagger_{t}(\cl U)\bigr)\\
&\subset h^{-1}(\interior(U)) \subset \interior(h^{-1}(U)),
\end{aligned}
\]
which implies that $h^{-1}(U)\in \sABlock(\phi)$ so that  we can  restrict $h^{-1}$ to $h^{-1}\colon \sABlock(\psi) \to \sABlock(\phi)$. As before, $\sABlock(\tau\times h) = h^{-1}$. This makes $\sABlock\colon \sDS \to \sBDLat$ a contravariant functor. We will primarily use attracting neighborhoods, but Remark \ref{rem:nbhdvsblock} demonstrates that restricting to attracting blocks does not change the theory.
\end{remark}

A similar construction can be used to define a contravariant functor $\sAtt\colon\sDS\to\sBDLat$, but its action on morphisms must be modified, since the inverse image of an attractor need not be an attractor.

\begin{proposition}\label{prop:finvAtt}
Suppose $\tau \times h\in\Hom(\phi, \psi)$ and $A\in \sAtt(\psi)$. Then $\omega_\phi(h^{-1}(A))\in \sAtt(\phi)$. Moreover, for $\tau \times h\in\Hom(\phi,\psi),$ the map $\omega_\phi\circ h^{-1}\colon\sAtt(\psi)\to\sAtt(\phi)$ is a lattice homomorphism.
\end{proposition}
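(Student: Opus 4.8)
The plan is to prove well-definedness first and then check that $g:=\omega_\phi\circ f^{-1}$ respects both lattice operations, reducing every step to the pull-back identity \eqref{pullbackatt2} together with the already-established fact that $\omega$ is a lattice homomorphism on attracting neighborhoods. For well-definedness, I would write $A=\omega_\psi(U)$ for some $U\in\sANbhd(\psi)$, which exists by the definition of an attractor. Since $\sANbhd$ is a contravariant functor with $\sANbhd(\tau\times f)=f^{-1}$, the set $V:=f^{-1}(U)$ lies in $\sANbhd(\phi)$. The first equality in \eqref{pullbackatt2} then yields
\[
\omega_\phi(f^{-1}(A))=\omega_\phi\bigl(f^{-1}(\omega_\psi(U))\bigr)=\omega_\phi(V),
\]
exhibiting $\omega_\phi(f^{-1}(A))$ as the $\omega$-limit of the attracting neighborhood $V$, hence an attractor. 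This proves $g(A)\in\sAtt(\phi)$ and simultaneously records the formula $g(A)=\omega_\phi(f^{-1}(U))$ that I will reuse below.

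For the join, using that inverse images preserve unions and property (v) of $\omega_\phi$,
\[
g(A\vee A')=\omega_\phi\bigl(f^{-1}(A)\cup f^{-1}(A')\bigr)=\omega_\phi(f^{-1}(A))\cup\omega_\phi(f^{-1}(A'))=g(A)\vee g(A'),
\]
since joins in both $\sAtt(\psi)$ and $\sAtt(\phi)$ are unions. This is immediate and requires no new ideas.

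The meet is the delicate case, and this is where I expect the main obstacle: the meet in $\sAtt$ is the twisted operation $A\wedge A'=\omega(A\cap A')$, so I must reconcile $\omega_\phi\bigl(f^{-1}(\omega_\psi(A\cap A'))\bigr)$ with $\omega_\phi\bigl(\omega_\phi(f^{-1}A)\cap\omega_\phi(f^{-1}A')\bigr)$, an awkward comparison of nested applications of two distinct $\omega$-operators. I would route around this by passing to attracting neighborhoods, where meets are genuine intersections. Choosing $A=\omega_\psi(U)$ and $A'=\omega_\psi(U')$, the homomorphism property of $\omega_\psi$ on $\sANbhd(\psi)$ gives $A\wedge A'=\omega_\psi(U\cap U')$; applying $g$ and then \eqref{pullbackatt2} collapses this to
\[
g(A\wedge A')=\omega_\phi\bigl(f^{-1}(\omega_\psi(U\cap U'))\bigr)=\omega_\phi\bigl(f^{-1}(U\cap U')\bigr).
\]
On the other side, by the formula from the first paragraph $g(A)=\omega_\phi(f^{-1}(U))$ and $g(A')=\omega_\phi(f^{-1}(U'))$, and since $f^{-1}(U),f^{-1}(U')\in\sANbhd(\phi)$ and $\omega_\phi$ is a lattice homomorphism there,
\[
g(A)\wedge g(A')=\omega_\phi\bigl(f^{-1}(U)\cap f^{-1}(U')\bigr)=\omega_\phi\bigl(f^{-1}(U\cap U')\bigr),
\]
using $f^{-1}(U)\cap f^{-1}(U')=f^{-1}(U\cap U')$. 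The two expressions coincide, giving meet preservation.

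Finally, I would remark that the bounds are preserved as well, so that $g$ is actually a morphism in $\sBDLat$: the bottom $\varnothing\mapsto\omega_\phi(f^{-1}(\varnothing))=\varnothing$, and the top $\omega_\psi(Y)\mapsto\omega_\phi(X)$ follows from \eqref{pullbackatt2} with $U=Y$ since $f^{-1}(Y)=X$. The only genuine subtlety in the whole argument is the meet, and the key device is that representing attractors by attracting neighborhoods lets one compute with ordinary intersections and invoke the homomorphism property of $\omega$, thereby avoiding any direct manipulation of the twisted meet $\omega(\,\cdot\,\cap\,\cdot\,)$.
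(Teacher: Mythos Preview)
Your proof is correct. The well-definedness and join arguments match the paper's exactly. For the meet, however, you take a genuinely different route: you lift both attractors to attracting neighborhoods $U,U'$, where the meet is literal intersection, and then invoke the known homomorphism property of $\omega$ on $\sANbhd$ together with \eqref{pullbackatt2} to collapse both sides to $\omega_\phi\bigl(f^{-1}(U\cap U')\bigr)$. The paper instead works entirely at the level of attractors and establishes the equality by two chains of inclusions, using idempotency of $\omega_\phi$ and the containment in \eqref{pullbackatt2} repeatedly to show $\omega_\phi(f^{-1}(A\wedge A'))\subset \omega_\phi(f^{-1}(A))\wedge\omega_\phi(f^{-1}(A'))$ and conversely. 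Your approach is shorter and conceptually cleaner, since it exploits the surjectivity of $\omega_\psi\colon\sANbhd(\psi)\twoheadrightarrow\sAtt(\psi)$ to transport the problem to a setting where the lattice structure is transparent; the paper's approach has the minor advantage of being self-contained at the attractor level without choosing representatives.
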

\proof
See Appendix~\ref{sec:app1}.
\eproof

Thus, by assigning each dynamical system its attractor lattice and each morphism $\tau\times h\in\Hom(\phi,\psi)$ the operator $\sAtt(\tau\times h) = \omega_\phi\circ h^{-1}$, we have a contravariant functor $\sAtt\colon \sDS\to \sBDLat$.

\begin{remark}
\label{conjatt}
If $\tau\times h\in \Hom(\phi,\psi)$ is a conjugacy, i.e. $h\colon X\to Y$ is a homeomorphism, then 
also $\tau^{-1}\times h^{-1}\in \Hom(\psi,\phi)$ is a conjugacy, where $\tau^{-1}(s,y)$ is defined by 
$s=\tau(t,h^{-1}(y))$. As a consequence, $A\in \sAtt(\phi)$ if and only if $h(A)\in\sAtt(\psi)$, cf.\ Appendix \ref{sec:app1}.
\end{remark}

\begin{remark}\label{remark:alpha} 
Similar statements as in Lemma~\ref{lem:finv} also hold for $\alpha$-limit sets, as defined in \cite{KMV-0}. 
Therefore we can define
functors $\sRNbhd,\sRBlock\colon \sDS \to \sBDLat$ for repelling neighborhoods and repelling blocks analogously. 
As for $\sAtt$, one builds $\sRep\colon \sDS \to \sBDLat$ by replacing $\omega$ with $\alpha$. The details for these constructions are in Appendix \ref{repels12}.
\end{remark}

\begin{remark}
In some situations it is  useful to consider attracting neighborhoods in the algebra of regular closed sets using analogous constructions, cf.\ \cite{KMV-1b}.
\end{remark}

\begin{remark}
\label{isolnbhd}
In the spirit of Montgomery we one can also consider the semilattice of isolating neighborhoods $\sINbhd(\phi)$ defined by the property $\phi_t(\cl U) \subset \Int U$, cf.\ \cite{Mont}. An isolated invariant set is obtained as the maximal invariant set of an isolating neighborhood: $S=\Inv_\phi(U)$. The semilattice of isolated invariant sets is denoted by $\sIsol(\phi)$, with $S\wedge S' = \Inv_\phi(S\cap S')$. As for attractors $\Inv_\phi\colon \sINbhd(\phi)\to \sIsol(\phi)$ is a semi-lattice homomorphism and $\sINbhd$ and $\sIsol$ may be regarded as functors.
\end{remark}

Given this functorial description of dynamical structures, we now turn to the primary focus of this paper, representing continuation of dynamical features in terms of sheaves over $\sDS(\T,X)$.  To keep the underlying theory flexible, and so as not to repeat theoretical arguments, we first introduce the underlying concepts and theorems abstractly, and then apply this general theory in specific contexts.

\section{Abstract continuation}
\label{sec:ShStr}
Recall from the introduction that a fundamental feature of Conley theory is that an isolated invariant set continues under perturbation of a dynamical system, which leads to the concept of continuation of isolated invariant sets.
In this section, we provide an abstract framework to expand the continuation property to algebraic structures
of dynamics.

\subsection{$\cC$-structures and categories of elements}
\label{Cstruc}
Let $\cD$ be a category such that $\sob(\cD)$ forms a topological space, 
and let $\cC$ be a \emph{concrete} category, i.e.  there exists a faithful functor, $\cC \to \sSetCat$, into the category of sets.
In applications $\cD$ is a category  of dynamical systems equipped with a topology on $\sob(\cD)$,
such as $\sDS(\T,X),$ and $\cC$ is the category characterizing the algebraic structure of the dynamical feature to be continued, for example bounded, distributive lattices. 

A $\cC$-valued contravariant functor on $\cD$ is referred to as a $\cC$\textit{-structure} on $\cD$. Let $\sE,\sG\colon \cD\to \cC$ be $\cC$-structures and let $\sw\colon \sE \Rightarrow \sG$ be a natural transformation. 
For  objects $\phi\in \sob(\cD)$ the functors $\sE$ and $\sG$ yield objects $\sE(\phi)$ and $\sG(\phi)$ in $\cC$ and the component $\sw_\phi$ of the natural transformation yields a morphism $\sw_\phi\colon \sE(\phi) \to \sG(\phi)$.

In applications, typically the functor $\sG$ represents a dynamical feature such as attractors, and the functor $\sE$ denotes a corresponding neighborhood feature such as attracting neighborhoods or attracting blocks.

Furthermore, we assume the existence of a constant, contravariant functor $\sPow\colon \cD\to \cC$, $\phi\xmapsto{\sPow} \sPow_0$, referred to as the \emph{universe functor}, for which there exists an injective natural transformation
$\biota\colon \sE\Rightarrow \sPow$. \label{pg:universefunctor} In dynamics applications, when $\cD= \sDS(\T,X)$, the universe functor assigns to each $\phi$ a fixed subalgebra of the Boolean algebra $\sSet(X)$, the power set of the phase space, or a fixed subalgebra of the Boolean algebra $\scrR(X),$ the regular closed subsets of $X$. For example, if $\sE=\sANbhd$, the lattice of attracting neighborhoods, $\sE(\phi),$ is a sublattice of $\sSet(X)$.

Now we have a span of functors and natural transformations $\sPow \xLeftarrow{\biota} \sE \xRightarrow{\sw} \sG$ which are summarized in the following diagrams: 
\begin{equation}
    \label{funcdiagranms}
\begin{tikzcd}[column sep=huge]
\cD
  \arrow[bend left=50]{r}[name=U,label=above:$\sE$]{}
  \arrow[bend right=50]{r}[name=D,label=below:$\sPow$]{} &
\cC
  \arrow[shorten <=5pt,shorten >=4pt,Rightarrow,to path={(U) -- node[label=left:$\biota$] {} (D)}]{}
\end{tikzcd}
\qquad\qquad
\begin{tikzcd}[column sep=huge]
\cD
  \arrow[bend left=50]{r}[name=U,label=above:$\sE$]{}
  \arrow[bend right=50]{r}[name=D,label=below:$\sG$]{} &
\cC
  \arrow[shorten <=5pt,shorten >=4pt,Rightarrow,to path={(U) -- node[label=left:$\sw$] {} (D)}]{}
\end{tikzcd}
\end{equation}

Since $\cC$ is a concrete category, we may regard a functor $\sE$ into $\cC$ as a $\sSetCat$-valued functor, and thus consider $\mathrm{\Pi}[\sE]$, its \emph{category of elements}, cf.\ \cite{MacLane,MacLaneMoerdijk}. \label{pg:catelements} 
The category of elements construction is used in the next section to generate an \etale space. To define the category of elements $\mathrm{\Pi}[\sE]$, let $\sob(\mathrm{\Pi}[\sE])$ be the set of all pairs $(\phi,U)$ such that $\phi\in \sob(\cD)$ and $U\in \sE(\phi)$. 
The morphisms of $\mathrm{\Pi}[\sE]$ are maps $(\phi,U) \to (\phi',U')$ for which
there is a $\cD$-morphism $h\colon \phi \to \phi'$ with 
$\sE(h)(U') = U$. 
The projection $(\phi,U) \mapsto \phi$ defines a canonical projection functor  
\begin{equation*}
    \bpi\colon \mathrm{\Pi}[\sE] \to \cD.
\end{equation*}
Moreover, given a natural transformation between functors, $\sw\colon \sE \Rightarrow \sG,$ we have the functor between the associated categories of elements
\begin{equation*}
\begin{aligned}
\mathrm{\Pi}[\sw]\colon \mathrm{\Pi}[\sE] &\to \mathrm{\Pi}[\sG] \\
 (\phi,U) &\mapsto (\phi,\sw_\phi(U)).
\end{aligned}
\end{equation*}

From the span of functors $\sPow \xLeftarrow{\biota} \sE \xRightarrow{\sw} \sG$ we obtain a span of functors on the  associated categories of elements:
\begin{equation}
\label{elcats22}
\begin{aligned}
\mathrm{\Pi}[\sPow] &\xleftarrow{\mathrm{\Pi}[\biota]} \mathrm{\Pi}[\sE] \xrightarrow{\mathrm{\Pi}[\sw]} \mathrm{\Pi}[\sG]\\
(\phi,U) &\xmapsfrom{\phantom{000\,}} (\phi,U) \xmapsto{\phantom{000\,}} (\phi,\sw_\phi(U)).
\end{aligned}
\end{equation}
Note that in  \eqref{elcats22}, the set $U\in\sE(\phi)$. To localize $\mathrm{\Pi}[\sE]$, for a fixed element $U \in \sPow_0$ 
we define the subcategory
$\mathrm{\Pi}[\sE;U]$ via
\[
\sob \bigl(\mathrm{\Pi}[\sE;U] \bigr) := \Bigl\{(\phi,U) \in \sob(\mathrm{\Pi}[\sE])~|~ U\in \sE(\phi)\Bigr\}
\]
with morphisms $(\phi,U) \to (\phi',U)$ for which
there is a $\cD$-morphism $h\colon \phi \to \phi'$ with $\sE(h)(U) = U$.

Applying the projection functor $\bpi$ yields a corresponding subcategory 
$\mathrm{\Phi}[\sE;U]$ of $\cD$. \label{pg:phiopen} The objects of $\mathrm{\Phi}[\sE;U]$ are given by
$\sob\bigl(\mathrm{\Phi}[\sE;U] \bigr) = \bigl\{\phi \in \sob(\cD)~|~ U\in \sE(\phi)\bigr\}$
with morphisms $h\colon \phi \to \phi'$ with $\sE(h)(U) = U$.
This yields the following commutative diagrams:
\begin{equation}
    \begin{tikzcd}[column sep=large, row sep=huge]
\mathrm{\Pi}[\sE]\arrow[r,"{\bpi}{}"]
  &
\cD 
\\
\mathrm{\Pi}[\sE;U] \arrow[u,"\subset"] \arrow[r,"\bpi"]& \mathrm{\Phi}[\sE;U] \arrow[u,"{\subset}{}"]
\end{tikzcd}
\qquad
\begin{diagram}
\node{} \node{\mathrm{\Pi}[\sG]}\arrow{s,l}{\bpi}\\
\node{\mathrm{\Phi}[\sE;U]} \arrow{ne,l}{\mathrm{\Theta}[\sw;U]}\arrow{e,l}{\subset}\node{\cD}
\end{diagram}
\qquad
\dgARROWLENGTH=2.0em
\begin{diagram}
\node{\mathrm{\Pi}[\sE;U]}\arrow{e,l}{\mathrm{\Pi}[\sw]}\arrow{s,l}{\bpi} \node{\mathrm{\Pi}[\sG]}\\
\node{\mathrm{\Phi}[\sE;U]} \arrow{ne,r}{\mathrm{\Theta}[\sw;U]}
\end{diagram}
\end{equation}
where $\mathrm{\Theta}[\sw;U](\phi) := \bigl(\phi,\sw_\phi(U)\bigr)$ is called the \emph{partial section functor},\label{pg:partialsec} which satisfies:
\begin{equation}
    \label{comps2}
    \mathrm{\Theta}[\sw;U]\circ \bpi = \mathrm{\Pi}[\sw] \hbox{\; and\; }  \bpi \circ \mathrm{\Theta}[\sw;U] = \sId.
\end{equation}
We leave it to the reader to verify functoriality.
\begin{remark}
In settings where $\mathrm{\Phi}[\sE;U]$ is a used as a subset of $\cD$ we abuse notation and write $\phi\in \mathrm{\Phi}[\sE;U]$. The same applies to open subsets $\mathrm{\Omega} \subset \cD$, cf.\ Sect.\ \ref{sec:DSCat}.
\end{remark}


\subsection{Continuation frames and \etale spaces}
\label{etale}


In the same way that $\sob(\cD)$ forms topological space, we will equip $\sob(\mathrm{\Pi}[\sG])$ with a topology. This is done such that the functors $\mathrm{\Theta}[\sw, U]$ become continuous maps on objects. We will abuse notation and drop the $\sob(-)$ when referring to "elements of $\mathrm{\Pi}[\sG]$." 
A $\cC$-structure $\sE\colon \cD \to \cC$ is called  \textit{stable} if $\mathrm{\Phi}[\sE;U]$ is open in $\cD$ for all elements $U\in \sPow_0$. 
Otherwise a $\cC$-structure is said to be \emph{unstable}. 
{In the remainder of the paper we will always assume that $\sE\colon \cD \to \cC$ admits a universe $\sPow_0$ for which it is stable.}

\begin{definition}
\label{Cframe}

A $\cC$\textit{-continuation frame} on $\cD$ is a triple $(\sG,\sE,\sw)$
consisting of  
$\cC$-structures $\sE,\sG\colon \cD\to \cC$  and a natural transformation
$\sw\colon \sE\Rightarrow\sG$
such that
\begin{enumerate}
        \item [(i)] $\sw_\phi\colon \sE(\phi) \twoheadrightarrow \sG(\phi)$ is surjective for all $\phi\in \sob(\cD)$;
    \item [(ii)] $\sE$ is a stable $\cC$-structure. 
    \item [(iii)] The sets $\bigl\{\phi \in \mathrm{\Phi}[\sE; U] \cap \mathrm{\Phi}[\sE; U'] : \sw_\phi(U) = \sw_\phi(U')\bigr\}$ are open for all pairs $U, U'\in \sF_0$.
\end{enumerate}
Condition (i) can be paraphrased by saying that $\sw$ is \emph{componentwise} surjective.
The $\cC$-structure $\sE$ in a continuation frame is 
is called a \emph{stable extension} of $\sG$. 
\end{definition}

Condition (iii) is crucial
for continuity of the sections $\mathrm{\Theta}[\sw; U]$.
In the application of $\cC$-structures in dynamics,
$\sG$ is typically unstable as the examples in the next section show. 

The next step is to
  topologize $\mathrm{\Pi}[\sG]$ with the topology generated by
    \[
     \scrB(\sG) :=\Bigl\{\mathrm{\Theta}[\sw;U](\mathrm{\Omega})~|~U\in \sPow_0, \,\hbox{$\mathrm{\Omega}\subset \mathrm{\Phi}[\sE;U]$ open}\Bigr\},
  \]
  where $\mathrm{\Theta}[\sw;U](\mathrm{\Omega})$ is the image under $\mathrm{\Theta}[\sw;U]$ of objects $\phi\in \mathrm{\Omega}$.

\begin{lemma} \label{lem:basis}
$\scrB(\sG)$ is a basis for a
topology on $\mathrm{\Pi}[\sG]$. The maps $\mathrm{\Theta[\sw; U]}\colon \mathrm{\Phi}[\sE; U] \to \mathrm{\Pi}[\sG]$ are all continuous.
\end{lemma}
\proof
Let $\mathrm{\Theta}[\sw;U](\mathrm{\Omega_1})$, $\mathrm{\Theta}[\sw;U'](\mathrm{\Omega_2})$ be some elements of $\scrB(\sG)$. We can write their intersection in the following way:
\[
\mathrm{\Theta}[\sw;U](\mathrm{\Omega_1}) \cap \mathrm{\Theta}[\sw;U'](\mathrm{\Omega_2}) = \mathrm{\Theta}[\sw;U](\mathrm{\Omega}),
\]
where we let $\mathrm{\Omega} = \mathrm{\Omega}_1 \cap \mathrm{\Omega}_2 \cap \bigl\{\phi \in \mathrm{\Phi}[\sE; U] \cap \mathrm{\Phi}[\sE; U'] : \sw_\phi(U) = \sw_\phi(U')\bigr\}$. For any given $\mathrm{\Theta}[\sw; U]$ and basis element $\mathrm{\Theta}[\sw; U'](\mathrm{\Omega})$, one has
\[
\mathrm{\Theta}[\sw; U]^{-1}\big( \mathrm{\Theta}[\sw; U'](\mathrm{\Omega}) \big) = \mathrm{\Omega} \cap \bigl\{\phi \in \mathrm{\Phi}[\sE; U] \cap \mathrm{\Phi}[\sE; U'] : \sw_\phi(U) = \sw_\phi(U')\bigr\},
\]
which is open.
\eproof

The functor $\bpi\colon \mathrm{\Pi}[\sG] \to \cD$ may be regarded as a projection
 $\pi\colon \sob\bigl(\mathrm{\Pi}[\sG]\bigr) \twoheadrightarrow \sob(\cD),$
 and with the above defined topology on $\sob\bigl(\mathrm{\Pi}[\sG]\bigr)$, it is also a continuous map between topological spaces. 
In this setting we denote the objects of the category of elements by $\mathrm{\Pi}[\sG]$,
 and we show that $(\mathrm{\Pi}[\sG],\pi)$ is an \emph{\'etal\'e space} in the category $\sSetCat$ by establishing that $\pi$ is a local homeomorphism.

\begin{theorem}\label{thm:shgen}
Let $(\sG,\sE,\sw)$ be a $\cC$-continuation frame on $\cD$. Then, the pair $(\mathrm{\Pi}[\sG],\pi)$ is an \etale space on $\cD$.
\end{theorem}

\proof
To establish $(\mathrm{\Pi}[\sG],\pi)$ as an \etale space with the above defined topology on $\mathrm{\Pi}[\sG]$ we show that $\pi$ is a local homeomorphism.

Let $(\phi,S)$ be a point in $\mathrm{\Pi}[\sG]$. Then, since $\sw_\phi\colon \sE(\phi) \twoheadrightarrow \sG(\phi)$ is surjective for all $\phi$, there exists $U\in \sE(\phi)$ such that $\sw_\phi(U) = S$. Consequently, the point $(\phi,S)$ is contained in 
the image of the map
$\mathrm{\Theta}[\sw;U]\colon \mathrm{\Phi}[\sE;U]\to \mathrm{\Pi}[\sG]$, 
which is \emph{open} by the definition of the topology. The image under $\pi$ of the set 
$\Img\bigl(\mathrm{\Theta}[\sw;U]\bigr)$  
is the set $\mathrm{\Phi}[\sE;U]$ which is open by assumption. It remains to show that 
$\pi\colon \Img\big(\mathrm{\Theta}[\sw;U]\big) \to \mathrm{\Phi}[\sE;U]$ 
is a homeomorphism.

First we show bijectivity. 
By definition 
$\pi\colon \Img\big(\mathrm{\Theta}[\sw;U]\big) \to \mathrm{\Phi}[\sE;U]$ 
is onto and since $\phi\mapsto (\phi,\sw_\phi(U))$ for $\phi\in \mathrm{\Phi}[\sE;U]$ is a section by Lemma \ref{lem:basis},
we establish bijectivity. 

Second we show that 
$\pi\colon \Img\big(\mathrm{\Theta}[\sw;U]\big) \to \mathrm{\Phi}[\sE;U]$ 
is continuous and open.
Let $\mathrm{\Omega} \subset \mathrm{\Phi}[\sE;U]$ be open. Then, 
$\pi^{-1}(\mathrm{\Omega}) = \mathrm{\Theta}[\sw;U](\mathrm{\Omega})$
is open by the definition of the topology which proves the continuity of $\pi$.
Let 
$\mathrm{\Theta}[\sw;U'](\mathrm{\Omega})$ be a basic open set.
Then,
\[
\pi\big( \mathrm{\Theta}[\sw;U'](\mathrm{\Omega}) \cap \Img\bigl(\mathrm{\Theta}[\sw;U]\bigr)\big)
= \mathrm{\Omega} \cap \bigl\{\phi \in \mathrm{\Phi}[\sE; U] \cap \mathrm{\Phi}[\sE; U'] : \sw_\phi(U) = \sw_\phi(U')\bigr\}
\]
is open, and thus $\pi$ is a homeomorphism. This proves that $\mathrm{\Pi}[\sG]$ is an  \etale space in $\sSetCat$.
\eproof

In the spirit of \cite{Mont} two points $(\phi,S)$ and  $(\phi',S')$ are \emph{related by continuation} if they are contained in the same quasicomponent of $\mathrm{\Pi}[\sG]$, or equivalently $(\phi',S')$ is contained in the quasicomponent of $(\phi,S)$. Recall that a quasicomponent of $(\phi,S)$ of $\mathrm{\Pi}[\sG]$ is the intersection of all clopen subsets of $\mathrm{\Pi}[\sG]$ containing $(\phi,S)$. 
The following result characterizes this topology.

\begin{proposition} 
Let $(\sG,\sE,\sw)$ be a $\cC$-continuation frame. The topology generated by the basis $\scrB(\sG)$ is the coarsest topology such that the maps $\mathrm{\Theta}[\sw; U]\colon \mathrm{\Phi}[\sE; U] \to \mathrm{\Pi[\sG]}$ are continuous, and $\pi\colon \mathrm{\Pi}[\sG] \to \cD$ is a local homeomorphism.
\end{proposition}
\proof
Theorem \ref{thm:shgen} and Lemma \ref{lem:basis} give us that $\scrB(\sG)$ generates such a topology. Now suppose the maps $\mathrm{\Theta}[\sw; U]\colon \mathrm{\Phi}[\sE; U] \to \mathrm{\Pi[\sG]}$ are continuous, and $\pi\colon \mathrm{\Pi}[\sG] \to \cD$ is a local homeomorphism in some topology $\tau$. Let $\mathrm{\Omega}\subset \mathrm{\Phi}[\sE; U]$ be an open set in $\cD$ for some $U$. We have that $\pi \circ \mathrm{\Theta}[\sw; U] = \iota$ where $\iota$ denotes the inclusion of $\mathrm{\Omega}$ into $\cD$. Since $\iota$ and $\pi$ are both local homeomorphisms, so is $\mathrm{\Theta}[\sw; U]$, which implies that the image $\mathrm{\Theta}[\sw; U](\mathrm{\Omega})$ is  in $\tau$. Every element of $\scrB(\sG)$ is of this form; thus, $\scrB(\sG)$ is coarser than $\tau$.
\eproof


The stable $\cC$-structure in a $\cC$-continuation frame yields a second \etale space.
Topologize $\mathrm{\Pi}[\sE]$ as follows. 
Define the embedding $\mathrm{\Theta}[\sId;U]\colon\mathrm{\Phi}[\sE;U] \to \mathrm{\Pi}[\sE]$ as the trivial section $\phi \mapsto (\phi,U)$ and define a subbasis for the topology on $\mathrm{\Pi}[\sE]$ as follows:
  \[
  \scrB(\sE) :=\Bigl\{\mathrm{\Theta}[\sId;U](\mathrm{\Omega})~|~U\in \sPow_0, \,\hbox{$\mathrm{\Omega}\subset \mathrm{\Phi}[\sE;U]$ open}\Bigr\}.
  \]
\vskip-.5cm  
\begin{corollary}\label{prop:shgen2}
Let $(\sG,\sE,\sw)$ be a $\cC$-continuation frame on $\cD$. Then, the pair $(\mathrm{\Pi}[\sE],\pi)$ is an \etale space on $\cD$.
\end{corollary}

\proof
The projection $\pi\colon \mathrm{\Pi}[\sE] \to \sob(\cD)$ given by $(\phi,U) \mapsto \phi$ is a local homeomorphism with the above defined topology, i.e.
$\pi\colon \mathrm{\Pi}[\sE;U] \to \mathrm{\Phi}[\sE;U]$ is a homeomorphism.
\eproof

The category of elements $\mathrm{\Pi}[\sPow]$ trivially gives an \etale space and makes the span of functors in \eqref{elcats22} into a span of \etale spaces. A continuous map $\mathrm{\Pi}[\sw] \colon \mathrm{\Pi}[\sE]\to \mathrm{\Pi}[\sG]$ is called an \emph{\etale morphism} if the following diagram in commutes, cf.\  \cite[Definition 3.3]{Tenni}. 
\begin{equation}\label{etaletrig}
\begin{diagram}
\node{\mathrm{\Pi}[\sE]}\arrow[2]{e,l}{\mathrm{\Pi}[\sw]}\arrow{se,r}{\pi}\node{}\node{\mathrm{\Pi}[\sG]}\arrow{sw,r}{\pi}\\
\node{}\node{\cD}\node{}
\end{diagram}
\end{equation}
Such a map is then a local homeomorphism by
\cite[Proposition 2.4.8]{Borceux},  \cite[Lemma 3.5]{Tenni}.

\begin{corollary}
\label{etalemaps2}
The map $\mathrm{\Pi}[\sw]\colon \mathrm{\Pi}[\sE] \to \mathrm{\Pi}[\sG]$ is an \etale morphism.
\end{corollary}

\proof
By definition of the topologies on $\mathrm{\Pi}[\sE]$ and $\mathrm{\Pi}[\sG]$, the inverse image under $\mathrm{\Pi}[\sw]$ of a basis element $\mathrm{\Theta}[\sw;U](\mathrm{\Omega})$, $\mathrm{\Omega}\subset \cD$ open, is open in $\mathrm{\Pi}[\sE]$, which proves that $\mathrm{\Pi}[\sw]$ is continuous. 
\eproof


\subsection{Induced \etale  morphisms}
\label{unique}
The following lemma provides a criterion to construct an \etale space morphism from a natural transformation of structures.
\begin{lemma}
Let  $\sE, \sE'$ be stable $\cC$-structures on $\cD$ and let $\sn\colon \sE \Rightarrow \sE'$ be a natural transformation. Then, the induced functor $\mathrm{\Pi}[{\sn}]\colon \mathrm{\Pi}[\sE] \to \mathrm{\Pi}[\sE']$, defined by
\[
(\phi,U) \mapsto (\phi, {\sn}_\phi(U)),
\]
defines a morphism of  \etale spaces if and only if the sets
$$
\{\phi \in \mathrm{\Phi}[\sE;U] : {\sn}_\phi(U) = U'\},
$$
are open for every pair $U,U'\in \sPow_0$.
\end{lemma}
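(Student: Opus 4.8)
The plan is to reduce the claim to a statement purely about continuity of $\Pi[\sn]$. First I would observe that $\Pi[\sn]$ is well defined on points: since $\sn_\phi\colon \sE(\phi)\to \sE'(\phi)$ is a $\cC$-morphism, $U\in \sE(\phi)$ forces $\sn_\phi(U)\in \sE'(\phi)$, so $(\phi,\sn_\phi(U))$ is a genuine object of $\Pi[\sE']$. Because $\Pi[\sn]$ preserves the first coordinate, the triangle analogous to \eqref{etaletrig} commutes automatically, i.e. $\pi\circ \Pi[\sn]=\pi$. By the definition of an \etale morphism (cf.\ the Remark following Corollary \ref{etalemaps2}), $\Pi[\sn]$ is therefore a morphism of \etale spaces precisely when it is continuous. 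Thus the whole content of the lemma is that continuity of $\Pi[\sn]$ is equivalent to openness of the sets $W_{U,U'}:=\{\phi\in \Phi[\sE;U] : \sn_\phi(U)=U'\}$.

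The key computation I would carry out is the preimage of a subbasic open set. A generic subbasic open set of $\Pi[\sE']$ has the form $\Theta[\sId;U'](\Omega')=\{(\phi,U'):\phi\in \Omega'\}$ with $\Omega'\subset \Phi[\sE';U']$ open. Unwinding the definitions, a point $(\phi,U)\in \Pi[\sE]$ lies in $\Pi[\sn]^{-1}\bigl(\Theta[\sId;U'](\Omega')\bigr)$ iff $\sn_\phi(U)=U'$ and $\phi\in \Omega'$. Grouping these points by the value of the second coordinate gives
\[
\Pi[\sn]^{-1}\bigl(\Theta[\sId;U'](\Omega')\bigr)=\bigcup_{U\in \sP}\Theta[\sId;U]\bigl(W_{U,U'}\cap \Omega'\bigr).
\]
For the ``if'' direction I would then argue that each $W_{U,U'}\cap \Omega'$ is open in $\Phi[\sE;U]$: the hypothesis makes $W_{U,U'}$ open there, while $\Omega'$ is open in $\cD$ because $\sE'$ is stable, so the intersection is again open. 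Hence every term on the right is a subbasic open set of $\Pi[\sE]$, their union is open, and $\Pi[\sn]$ is continuous, hence an \etale morphism.

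For the converse I would start from continuity of $\Pi[\sn]$ and recover openness of a single $W_{U,U'}$. Taking $\Omega'=\Phi[\sE';U']$ above shows that $\Pi[\sn]^{-1}\bigl(\Img(\Theta[\sId;U'])\bigr)$ is open. Intersecting this with the open set $\Img(\Theta[\sId;U])$ isolates the slice with fixed second coordinate $U$, yielding $\Theta[\sId;U](W_{U,U'})$ as an open subset of $\Pi[\sE]$. The final step uses that the trivial section $\Theta[\sId;U]\colon \Phi[\sE;U]\to \Pi[\sE]$ is a homeomorphism onto its image --- equivalently, $\pi\colon \Pi[\sE;U]\to \Phi[\sE;U]$ is a homeomorphism by Corollary \ref{prop:shgen2} --- so transporting $\Theta[\sId;U](W_{U,U'})$ back along $\pi$ exhibits $W_{U,U'}$ as open in $\Phi[\sE;U]$.

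The main obstacle, and essentially the only place requiring care, is the bookkeeping in passing between the single-index sets $W_{U,U'}$ and the preimage, which a priori mixes all second coordinates $U$. The resolving device is that each trivial section $\Theta[\sId;U]$ is an open embedding, so its image is an open slice of $\Pi[\sE]$; intersecting with this slice extracts the contribution of one $U$, and descending along the local homeomorphism $\pi$ recovers openness downstairs in $\Phi[\sE;U]$. I would also keep careful track of openness relative to $\cD$, to $\Phi[\sE;U]$, and to $\Phi[\sE';U']$, which all agree after intersection precisely because both $\cC$-structures $\sE$ and $\sE'$ are assumed stable.
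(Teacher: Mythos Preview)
Your proof is correct and follows essentially the same approach as the paper: both directions hinge on the preimage computation $\Pi[\sn]^{-1}\bigl(\Theta[\sId;U'](\Omega')\bigr)=\bigcup_{U}\Theta[\sId;U](W_{U,U'}\cap\Omega')$, and the converse is obtained by intersecting with the slice $\Img(\Theta[\sId;U])$ and pushing down along the local homeomorphism $\pi$. Your write-up is in fact slightly more careful than the paper's, since you make explicit the reduction to continuity via the commuting triangle and distinguish the ambient topologies in which openness is being asserted.
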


\proof
Suppose $\mathrm{\Pi}[{\sn}]$ is continuous, and $U,U'\in \sPow_0$. Then, 
$$
\big\{\phi \in \mathrm{\Phi}[\sE;U] ~|~ {\sn}_\phi(U) = U'\big\} = \pi\Big(\mathrm{\Pi}[{\sn}]^{-1}\big(\Img(\mathrm{\Theta}[\sId; U])\big)\cap \Img(\mathrm{\Theta}[\sId; U'])\Big ),
$$ 
which is open. Now for the converse. Let $\mathrm{\Theta}[\sId; U'](\mathrm{\Omega})$ be a subbasis element of $\mathrm{\Pi}[\sE']$. Then,
$$
\bigcup_{U\in \sPow} \Big( \mathrm{\Omega}\cap\big\{\phi \in \mathrm{\Phi}[\sE;U] ~|~ {\sn}_\phi(U) = U'\big\}\Big) = \mathrm{\Pi}[{\sn}]^{-1}\big(\mathrm{\Theta}[\sId; U'](\mathrm{\Omega})\big),
$$
which is a union of open sets, and therefore open.
\eproof

When the action of ${\sn}_\phi$ is independent of $\phi\in \sob(\cD)$, the openness condition is trivially satisfied. This condition for stable structures translates to unstable structures in the following proposition.

\begin{proposition}\label{prop:shmor}
Let  $(\sG,\sE,\sw)$ and $(\sG',\sE',\sw')$ be continuation frames on $\cD$, and
let $\sv\colon \sG \Rightarrow \sG'$ be a natural transformation. Suppose there exists a natural transformation $\widetilde{\sv}\colon \sE\Rightarrow \sE'$ 
such that $\mathrm{\Pi}[\widetilde{\sv}]$ is continuous, and the following diagram commutes: 
\begin{equation}\label{eq:etalelift}
\begin{tikzcd}
\sE\arrow[r,Rightarrow, "\widetilde{\sv}"]\arrow[d, Rightarrow, "\sw"]& 
\sE'\arrow[d, Rightarrow, "\sw'"] \\ \sG\arrow[r, Rightarrow,"\sv"]& 
\sG'\end{tikzcd}
\end{equation}
Then, $\mathrm{\Pi}[\sv]$ is a morphism of \etale spaces. The lift $\widetilde{\sv}$ is called a {stable extension} of $\sv$.
\end{proposition}

\proof
We have the following maps on \etale spaces:
\[
\begin{tikzcd}
\mathrm{\Pi}[\sE]\arrow[rr,"\mathrm{\Pi}{[}\widetilde{\sv}{]}"]\arrow[d, "\mathrm{\Pi}{[}\sw{]}"]&& 
\mathrm{\Pi}[\sE']\arrow[d, "\mathrm{\Pi}{[}\sw'{]}"] \\ \mathrm{\Pi}[\sG]\arrow[rr, "\mathrm{\Pi}{[}\sv{]}"]\arrow[dr,"\pi"]&& 
\mathrm{\Pi}[\sG']\arrow[dl,"\pi"]
\\
& \cD
\end{tikzcd}
\]
By Corollary \ref{etalemaps2} $\mathrm{\Pi}[\sw]$ and $\mathrm{\Pi}[\sw']$ are \etale morphisms and by \cite[Proposition 2.4.8]{Borceux},  \cite[Lemma 3.5]{Tenni} the map $\mathrm{\Pi}[\tilde \sv]$ is an \etale morphism. Diagram chasing then  shows that $\mathrm{\Pi}[\sv]$ is also an \etale morphism by using the same results.
\eproof

\section{Continuation of attractors and Morse representations}
\label{contatt12}

In this section we establish continuation frames for attractors and for finite sublattices of attractors. The latter induces continuation of Morse representations.


\subsection{Attractors}\label{sec:att}
In Section \ref{sec:AttFun} we establised the functors $\sANbhd$ and $\sAtt$ acting between the category of dynamical systems and the category of bounded, distributive lattices. The topologies introduced in Section \ref{sec:DSCat} yield the following result.
\begin{lemma}
\label{stalANbhd}
$\sANbhd\colon\sDS(\T,X)\to\sBDLat$ is a stable structure.
\end{lemma}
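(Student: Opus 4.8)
<br>

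The goal is to prove that $\sANbhd\colon\sDS(\T,X)\to\sBDLat$ is a stable structure. By the definition of stability given in Section~\ref{etale}, this means showing that for every fixed subset $U\in\sP = \sSet(X)$ (the Boolean algebra of subsets of $X$, serving as the universe functor), the set
\[
\Phi[\sANbhd;U] = \bigl\{\phi\in\sob(\sDS(\T,X)) ~|~ U\in\sANbhd(\phi)\bigr\}
\]
is open in the compact-open topology on $\sob(\sDS(\T,X))$. Unwinding the definition of an attracting neighborhood, this is the set of flows $\phi$ satisfying $\omega_\phi(U)\subset\Int(U)$.

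Let me think about how I would establish openness. The plan is to fix $\phi_0\in\Phi[\sANbhd;U]$ and produce a subbasic (or basic) compact-open neighborhood of $\phi_0$ entirely contained in $\Phi[\sANbhd;U]$. The key reformulation I would use is that, since $X$ is compact, the condition $\omega_{\phi_0}(U)\subset\Int(U)$ can be replaced by a finite-time condition on an attracting block or trapping region: there should exist a time $t_0>0$ (or a bounded time window) and a pair of open/compact sets witnessing that orbits starting in $\cl U$ enter $\Int(U)$ and stay there. Concretely, I expect to use the standard equivalence that $U$ is an attracting neighborhood for $\phi_0$ if and only if there is $t_0>0$ with $\phi_{0,t}(\cl U)\subset\Int(U)$ for all $t\ge t_0$, together with compactness to reduce this to finitely many compact-open conditions. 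Each such condition $\phi(K)\subset V$ with $K=\{t\}\times\cl U$ compact and $V=\Int(U)$ open is exactly a subbasic open set in the compact-open topology, so their finite intersection is an open neighborhood of $\phi_0$, and every $\phi$ in it still satisfies the trapping condition, hence $U\in\sANbhd(\phi)$.

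The main obstacle, and the step deserving the most care, is the reduction from the infinite-time limit-set condition $\omega_{\phi_0}(U)\subset\Int(U)$ to a \emph{finite-time} condition that is visibly compact-open. The definition $\omega_\phi(U)=\bigcap_{t\ge0}\cl\bigcup_{s\ge t}\phi_s(U)$ involves a nested intersection over all time, and a priori small perturbations of $\phi_0$ could destroy the asymptotic inclusion. Here compactness of $X$ is essential: I would argue that $\omega_{\phi_0}(U)\subset\Int(U)$ forces the forward tail $\cl\bigcup_{s\ge t_0}\phi_{0,s}(U)$ to lie inside $\Int(U)$ for some finite $t_0$, by a standard nested-compact-sets argument (the sets $\cl\bigcup_{s\ge t}\phi_{0,s}(U)$ are a decreasing family of compacta whose intersection lies in the open set $\Int(U)$, so by finite intersection one of them does). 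This is where I would also need to handle the continuity-in-time subtlety flagged in Section~\ref{sec:DSCat}: if $\T$ carries the discrete topology the ``time window'' may need to be an integer range, whereas for $\T=\R$ with standard topology one gets a genuine interval; either way the relevant set of times reduces to something compact in $\T$.

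Once the finite-time trapping reduction is in hand, the remainder is routine: I would write the witnessing inclusion as $\phi(\{0\}\times \cl U \,\cup\, [t_0,t_1]\times\cl U)\subset\Int(U)$ for a suitable compact time-slice, observe this is a finite intersection of compact-open subbasic sets (using that the required set of times is compact in $\T\times X$), and conclude that this open neighborhood of $\phi_0$ lies in $\Phi[\sANbhd;U]$. Since $\phi_0$ was arbitrary, $\Phi[\sANbhd;U]$ is open for every $U\in\sP$, and therefore $\sANbhd$ is stable with respect to the universe $\sP=\sSet(X)$. I would double-check that the perturbed flow's $\omega$-limit genuinely lands in $\Int(U)$ and not merely in $\cl U$, using property~(i) of $\omega_\phi$ (invariance and the fact that $\omega_\phi(U)\subset\cl\bigcup_{s\ge t_0}\phi_s(U)$) to propagate the finite-time inclusion back to the full limit set.
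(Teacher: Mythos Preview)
Your approach is essentially the paper's: reduce $U\in\sANbhd(\phi)$ to a finite-time trapping condition and then read off openness from the compact-open subbasis. There is, however, a genuine gap at exactly the point you flag as delicate, and also a slip.

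The slip: you cannot include $\{0\}\times\cl U$ in the compact set $K$, since $\phi_0=\id$ forces $\phi_0(\cl U)=\cl U$, and $\cl U\subset\Int U$ holds only when $U$ is clopen. Drop that piece.

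The gap: you correctly argue that for the \emph{fixed} $\phi_0$ there is $t_0$ with $\phi_{0,t}(\cl U)\subset\Int U$ for all $t\ge t_0$, but the compact-open condition you then impose on a \emph{perturbed} $\phi$ only controls $\phi$ on some compact window $[t_0,t_1]\times\cl U$. You say you will ``propagate the finite-time inclusion back to the full limit set'' via $\omega_\phi(U)\subset\cl\bigcup_{s\ge t_0}\phi_s(U)$, but that containment involves \emph{all} $s\ge t_0$, precisely the information you do not yet have for the perturbed $\phi$. The missing idea is to choose the window so that the semigroup property bootstraps it: take $t_1=2t_0$. If $\phi\bigl([\tau,2\tau]\times\cl U\bigr)\subset\Int U$, then for $t\in[2\tau,3\tau]$ one has $\phi_t(\cl U)=\phi_{t-\tau}\bigl(\phi_\tau(\cl U)\bigr)\subset\phi_{t-\tau}(\cl U)\subset\Int U$ since $t-\tau\in[\tau,2\tau]$; by induction $\phi_t(\cl U)\subset\Int U$ for all $t\ge\tau$, hence $U\in\sANbhd(\phi)$. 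With this choice one gets $\Phi[\sANbhd;U]=\bigcup_{\tau>0}\bigl\{\phi:\phi([\tau,2\tau]\times\cl U)\subset\Int U\bigr\}$, a union of subbasic open sets. This is exactly the argument the paper supplies and is the one non-routine step.
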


\proof
As subset of $\sDS(\T,X)$ we define 
$\mathrm{\Phi}[\sANbhd;U] := \{\phi\in \sDS(\T,X) ~|~ U\in \sANbhd(\phi)\}$ for any subset $U\subset X$. 
The condition $U\in \sANbhd(\phi)$ is equivalent to
$\omega_\phi(U) \subset \Int U$. By \cite{GKV,KMV-1a} we have the equivalent characterization:
$U\in \sANbhd(\phi)$ if and only if there exists a time $\tau>0$ such that
\begin{equation}\label{chareqan}
\phi_t(\cl U) \subset \Int U\quad \forall t\ge \tau,
\end{equation}
which is equivalent to
\begin{equation}\label{chareqan2}
\bigcup_{t\in [\tau,2\tau]} \phi_t(\cl U)  = \phi\bigl( [\tau,2\tau]\times \cl U\bigr) \subset \Int U.
\end{equation}
Indeed, if \eqref{chareqan} is satisfied then \eqref{chareqan2} follows. On the other hand if 
\eqref{chareqan2} is satisfied then 
\[
\begin{aligned}
 \phi\bigl( [2\tau,3\tau]\times \cl U\bigr) &= 
 \bigcup_{t\in [\tau,2\tau]} \phi_{t+\tau}(\cl U)  = \phi_\tau\Bigl( \bigcup_{t\in [\tau,2\tau]} \phi_t(\cl U) \Bigr)\\
 &= \phi_\tau(\Int U) \subset \phi_\tau(\cl U) \subset \Int U.
\end{aligned}
\]
By induction $ \phi\bigl( [n\tau,(n+1)\tau]\times \cl U\bigr) \subset \Int U$ for all $n\ge 1$, which establishes \eqref{chareqan}.
Summarizing, 
\begin{equation}
    \label{chareqan3}
\phi \in \mathrm{\Phi}[\sANbhd;U]~\text{~~~ if and only~~~}~ \phi\bigl( [\tau,2\tau]\times \cl U\bigr) \subset \Int U\;\text{for some~}~\tau>0.
\end{equation}

For any $\tau>0$ define
  $K_\tau = [\tau,2\tau] \times \cl U\subset \T\times X$ which is a compact set.
Consider the basic open sets
\[
\calB\bigl(K_\tau,\Int U\bigr) := \Bigl\{\phi~|~\phi(K_\tau) = \phi\big([\tau,2\tau] \times \cl U\big) \subset \Int U\Bigr\},
\]
which are contained in the subbasis for the compact-open topology on $\sDS(\T,X)$. By \eqref{chareqan3} an element $\phi\in \mathrm{\Phi}[\sANbhd;U]$
is contained in $\calB\bigl(K_\tau,\Int U\bigr)$ for some $\tau>0$
and thus $\mathrm{\Phi}[\sANbhd;U] \subset \bigcup_{\tau>0} \calB\bigl(K_\tau,\Int U\bigr)$.
On the other hand if $\phi \in \calB\bigl(K_\tau,\Int U\bigr)$ for some $\tau>0$, then \eqref{chareqan3}
implies that $\phi\in \mathrm{\Phi}[\sANbhd;U]$ which shows that $\bigcup_{\tau>0} \calB\bigl(K_\tau,\Int U\bigr)
\subset \mathrm{\Phi}[\sANbhd;U]$ and thus $\mathrm{\Phi}[\sANbhd;U] = \bigcup_{\tau>0} \calB\bigl(K_\tau,\Int U\bigr)$
which is a union of basic open set and thus open.
\eproof

The next result we prove for isolating neighborhoods, cf.\ Remark \ref{isolnbhd}, 
which applies to the special case of attracting neighborhoods.

\begin{lemma}
The sets $\bigl\{\phi \in \mathrm{\Phi}[\sINbhd; U] \cap \mathrm{\Phi}[\sINbhd; U'] : \Inv_\phi(U) = \Inv_\phi(U')\bigr\}$ are open in $\sDS(\T,X)$. 
\end{lemma}
\proof
The proof is identical to Montgomery's proof in \cite{Mont} for flows. Let $U_1, U_2 \in \sINbhd(\phi)$ for $\phi \in \sDS(\T,X)$. If $\Inv_\phi(U_1) = \Inv_\phi(U_2) = S$, then $\Inv_\phi(U) = S$ for $U := U_1 \cap U_2$. For any point $x\in V_i := \cl U_i \smin \interior U, i=1,2$, there exists a time $\tau>0$ such that $\phi(\tau, x) \in X \smin V_i$. By compactness, we may in fact pick $\tau>0$ such that $\phi(\tau, V_i) \subset X \smin V_i$. The set
\[
\begin{aligned}
\mathrm{\Omega} = \mathrm{\Phi}[\sINbhd; U_1] \cap \mathrm{\Phi}[\sINbhd; U_2] &\cap \{ \phi \in \sDS(\T,X) : \phi(\tau, V_1) \subset X \smin V_1 \}\\  &\cap \{ \phi \in \sDS(\T,X) : \phi(\tau, V_2) \subset X \smin V_2 \},
\end{aligned}
\]
is open in the compact-open topology. For any $\psi\in \mathrm{\Omega}$, $\Inv_\psi(U_1) = \Inv_\psi(U) = \Inv_\psi(U_2)$, so we are done.
\eproof

\begin{remark}
Similar to  attractors,  the triple $(\sINbhd,\sIsol,\Inv)$ is a $\MeetLat$-continuation frame on $\sDS(\T,X)$, cf.\ \cite{Mont}.
\end{remark}

In particular, $\Inv_\phi(U) = \omega_\phi(U)$ when $U\in\sANbhd(\phi)$ by Corollary 3.6 of \cite{KMV-1a}. Consequently, the triple $\big(\sAtt,\sANbhd,\omega \big)$ is a $\sBDLat$-continuation frame on $\sDS(\T,X)$
and $\sANbhd$ is a stable extension for $\sAtt$.
By
 Theorem \ref{thm:shgen} we have that $(\mathrm{\Pi}[\sAtt],\pi)$ is an \etale space in $\sSetCat$.
 
 \begin{remark} \label{rem:nbhdvsblock}
Stable extensions in a continuation frame are not unique. Following Remark \ref{attextviabl}, attracting blocks define attractors via 
 $\omega_\phi\colon \sABlock(\phi) \twoheadrightarrow \sAtt(\phi)$. As before we may regard $\sABlock\colon \sDS(\T,X) \to \sBDLat$ as a contravariant functor which is a stable extension of $\sAtt\colon \sDS(\T,X) \to \sBDLat$. 
  Using the inclusion transformation $\iota \colon \sABlock \Rightarrow \sANbhd$, 
  we obtain the following commutative diagram of transformations:
\[
\begin{tikzcd}
\sABlock\arrow[r,Rightarrow, "\iota"]\arrow[d, Rightarrow, "\mathrm{\omega}"]& 
\sANbhd\arrow[d, Rightarrow, "\mathrm{\omega}"] \\ \sAtt\arrow[r, Rightarrow,"\id"]& 
\sAtt\end{tikzcd}
\]
Proposition \ref{prop:shmor} obtains an isomorphism between the \etale spaces generated from the two continuation frames $(\sAtt, \sANbhd, \mathrm{\omega})$ and $(\sAtt, \sABlock, \mathrm{\omega})$. 
\end{remark}
 

The functor $\sAtt$ is the structure we wish to continue, with stable extension $\omega\colon\sANbhd\Rightarrow\sAtt$. This gives
\[
\begin{tikzcd}[column sep=huge]
\sDS(\T,X)
  \arrow[bend left=50]{r}[name=U,label=above:$\sANbhd$]{}
  \arrow[bend right=50]{r}[name=D,label=below:$\sAtt$]{} &
\sBDLat
  \arrow[shorten <=10pt,shorten >=10pt,Rightarrow,to path={(U) -- node[label=left:$\omega$] {} (D)}]{}
\end{tikzcd}
\qquad
\begin{diagram}
\node{} \node{\mathrm{\Pi}[\sAtt]}\arrow{s,l}{\bpi}\\
\node{\mathrm{\Phi}[\sANbhd;U]} \arrow{ne,l}{\mathrm{\Theta}[\omega;U]}\arrow{e,l}{\subset}\node{\sDS(\T,X)}
\end{diagram}
\]

We have the partial section map
\[
\mathrm{\Theta}[\omega; U]\colon \mathrm{\Phi}[\sANbhd;U] \to \mathrm{\Pi}[\sAtt],
\]
which maps a dynamical system $\phi$ with attracting neighborhood $U$ to the pair $(\phi,A)$ with its associated attractor $A=\omega_\phi(U)$. Since $\omega_\phi$ is surjective, given a pair  $(\phi, A)\in \mathrm{\Pi}[\sAtt]$, there exists an attracting neighborhood $U$ such that $\mathrm{\Theta}[\omega; U](\phi) = (\phi, A)$. 

\begin{remark}\label{remark:alpha2}
Following Remark \ref{remark:alpha}, we can build a continuation frame $(\sRep, \sRNbhd, \alpha)$ for repellers, which gives us an \etale space $\mathrm{\Pi}[\sRep]$. Proposition \ref{prop:shmor} and the setup in Appendix \ref{repels12} allow us to construct an isomorphism of \etale spaces from the dual repeller operator
\[
\mathrm{\Pi}[*]\colon \mathrm{\Pi}[\sAtt] \to \mathrm{\Pi}[\sRep] \quad (\phi, A)\mapsto (\phi, A^*),
\]
by using the set complement on attracting neighborhoods as a stable extension of $*$. To view set complement and $*$ as natural transformations, one can augment the hom-set of $\sBDLat$ with anti-homomorphisms or compose with an opposite functor. This technicality appears again in \ref{Morserepcont} with $\bmu$ and $\btau$. Alas, note that so far these are $\sSetCat$-valued \etale spaces. When we introduce lattice operations in Section \ref{sec:algconstr}, this will become an anti-isomorphism of $\sBDLat$-valued \etale spaces.
\end{remark}



\begin{remark}
Note that
the dual repeller operator $*$ is dependent on the underlying system $\phi$:
\begin{equation}
    \label{pg:dualrepeller}
    A \mapsto A^* = \{x\in X : \omega_\phi(x) \cap A =\emptyset \}.
\end{equation}
For convenience of notation, we will omit the subscript when the underlying system is understood.
\end{remark}

\subsection{Morse representations}
\label{Morserepcont}
Define the set $\subF\sAtt(\phi)$ consisting of all the finite sublattices $\sA\subset \sAtt(\phi)$. \label{pg:subf} Every finite sublattice is understood to contain at least the elements $\emptyset$ and $\omega_\phi(X)$.
The set of finite sublattices can be given the structure of a 
semibounded 
lattice 
\[
\sA\wedge \sA' := \sA\cap \sA',\quad \sA\vee \sA' := [\sA\cup \sA'], \quad \sA,\sA'\subset \sAtt(\phi),
\]
where $[\sA\cup \sA']$ is  the smallest sublattice containing $\sA\cup \sA'$. Note that, since the $\sAtt(\phi)$ may be infinite, there may be no maximal element in $\subF\sAtt(\phi)$, and hence $\subF\sAtt(\phi)$ may not be a bounded lattice.
The lattice $\subF\sAtt(\phi)$ has  minimal element $\{\emptyset,\omega_\phi(X)\}$.
%
%
%
Also $\subF\sAtt(\phi)$ is not a distributive lattice in general. The assignment $\sAtt(\phi) \to \subF\sAtt(\phi)$ may be regarded as covariant functor $\subF\colon \sBDLat \to \sLat$, where $\sLat$ is the category of  lattices.
Indeed, if $\sL$ and $\sK$ are bounded, distributive lattices and $g\colon \sL\to \sK$ is a lattice homomorphism (preserves $0$ and $1$), then the inclusion of a finite sublattice $i\colon\sL'\subset \sL$ defines a finite sublattice $\sK'\subset \sK$ as the range of the composition $g\circ i$. We define the arrow 
\[
\subF(g)\colon \subF\sL \to \subF\sK,\quad \sL' \mapsto \subF(g)(\sL') :=\sK'.
\]
The composition of functors yields the the contravariant functors $\subF\circ\sANbhd$ and $\subF\circ \sAtt$ which provide the following diagrams:


\begin{equation}\label{funcdiag1}
\begin{tikzcd}[column sep=huge]
\sDS(\T,X)
  \arrow[bend left=50]{r}[name=U,label=above:$\subF\sANbhd$]{}
  \arrow[bend right=50]{r}[name=D,label=below:$\subF\sSet$]{} &
\sLat
  \arrow[shorten <=10pt,shorten >=10pt,Rightarrow,to path={(U) -- node[label=right:$\bm{\mathsf{\iota}}$] {} (D)}]{}
\end{tikzcd}
\quad\quad
\begin{tikzcd}[column sep=huge]
\sDS(\T,X)
  \arrow[bend left=50]{r}[name=U,label=above:$\subF\sANbhd$]{}
  \arrow[bend right=50]{r}[name=D,label=below:$\subF\sAtt$]{} &
\sLat
  \arrow[shorten <=10pt,shorten >=10pt,Rightarrow,to path={(U) -- node[label=right:$\bm{\mathsf{\omega}}$] {} (D)}]{}
\end{tikzcd}
\end{equation}
where $\bm{\mathsf{\iota}}$ is the natural transformation defined by inclusion. The bounded lattice $\subF\sSet(X)$ consists of finite rings of sets over $X$ in the universe for the continuation frame we construct.
Moreover,  $\bm{\mathsf{\omega}}$ is a natural transformation defined as follows. 
%
Let $\sU\in \subF\sANbhd(\phi)$, then $\bm{\mathsf{\omega}}_\phi(\sU):=\{A=\omega_\phi(U)~|~U\in \sU\}$.
This construction also yields the lattice homomorphism  $\omega_\phi\colon \sU \twoheadrightarrow \bm{\mathsf{\omega}}_\phi(\sU)$.
From the definition of $\subF\sANbhd$ we obtain the following lemma.  

\begin{lemma}
\label{stablesub}
$\subF\sANbhd\colon \sDS(\T,X)\to \sLat$ is a stable structure. The sets $\{\phi \in \sDS(\T,X) : \bm{\mathsf{\omega}}_\phi(\sU) = \bm{\mathsf{\omega}}_\phi(\sU')\}$ are open.
\end{lemma}
\proof
For any finite sublattice $\sU \in \subF\sSet(X)$ we  observe that
\[
\mathrm{\Phi}[\subF\sANbhd; \sU] = \bigcap_{U\in \sU}\mathrm{\Phi}[\sANbhd; U],
\]
which, by Lemma \ref{stalANbhd}, is open. Suppose we have two finite sublattices $\sU, \sU'\in \subF\sANbhd(\psi)$ such that $\bm{\mathsf{\omega}}_\psi(\sU) = \bm{\mathsf{\omega}}_\psi(\sU')$. Then for each $U\in \sU$, there exists a $U'\in \sU'$ such that $\omega_\psi(U) = \omega_\psi(U')$, and vice versa. Take a finite intersection of open sets 
\[
\bigcap_{\substack{U\in \sU, U'\in \sU' \\ \omega_\psi(U) = \omega_\psi(U')}}
\{\phi \in \mathrm{\Phi}[\sANbhd; U] \cap \mathrm{\Phi}[\sANbhd; U'] : \omega_\phi(U) = \omega_\phi(U')\} 
\]
\[
\subset \{\phi \in \mathrm{\Phi}[\subF\sANbhd; \sU] \cap \mathrm{\Phi}[\subF\sANbhd; \sU'] : \bm{\mathsf{\omega}}_\phi(\sU) = \bm{\mathsf{\omega}}_\phi(\sU')\},
\]
and we are done.
\eproof

By construction the natural transformation $\bm{\mathsf{\omega}}\colon \subF\sANbhd \Longrightarrow \subF\sAtt$ is surjective, which in combination with Lemma \ref{stablesub} implies the following:
\begin{lemma}
\label{contframe}
The triple $\bigl(\subF\sAtt,\subF\sANbhd,\bm{\mathsf{\omega}} \bigr)$ is a $\sLat$-continuation frame.
\end{lemma}


Following \cite{KastiKV,KMV-1c} we associate an ordered partition $\sT(\sU)$ for every finite
sublattice $\sU\in \subF\sANbhd(\phi)$: let $\sJ(\sU)$ be the poset of join-irreducible elements in $\sU$, elements $U$ with a unique predecessor $\pred U$ , and 
for $U\in\sJ(\sU)$ define $T(U)= U\smin \pred U$. The poset $\sJ(\sU)$, ordered by inclusion, induces an isomorphic poset structure on $\sT(\sU) :=\{T(U)~|~U\in \sJ(\sU)\}$
with $T(U)\le T(U')$ if and only if $U \subseteq U'$.
The poset $\sT(\sU)$ is called a \emph{Morse tessellation} for $\phi$.
We can similarly consider a finite sublattice $\sA\in \subF\sAtt(\phi)$, and let $\sJ(\sA)$ be the poset of join-irreducible elements in $\sA$.
Define a map $\sJ(\sA)\to \sInvset(\phi)$
by $A\mapsto  M(A):=A\cap (\pred A)^* = C_\sAtt(A, \pred A)$. The elements $M(A)$ compose the isomorphic poset  $\sM(\sA) :=\{M(A)~|~A\in \sJ(\sA)\}$
with $M(A)\le M(A')$ if and only if $A \subseteq A'$.
The poset $\sM(\sA)$ is called a \emph{Morse representation} for $\phi$. 

Let $\sMRepr(\phi)$ denote the set of Morse representations for a dynamical system $\phi$, and $\sMTess(\phi)$ the set of Morse tessellations. There are bijections: \label{pg:mrepr}
\[
 \btau_\phi\colon \subF\sANbhd(\phi) \to \sMTess(\phi), \quad \btau_\phi(\sU):=\sT(\sU), ~~  \btau^{-1}_\phi(\sT) :=
\bigl\{U = \bigcup I~|~I\in \sO(\sT)\bigr\}.
\]
\[
\bmu_\phi\colon \subF\sAtt(\phi) \to \sMRepr(\phi), \quad  \bmu_\phi(\sA) := \sM(\sA), ~~ \bmu^{-1}_\phi(\sM) := \bigl\{ A = \bigcup_{M\in I}W^u(M) ~|~ I\in \sO(\sT)\bigr\}.
\]
These bijections let $\sMRepr(\phi)$ and $\sMTess(\phi)$ inherit the lattice structure of their dual counterparts $\subF\sAtt(\phi)$ and $\subF\sANbhd(\phi)$ respectively:
\[
\sT\vee\sT' := \btau_\phi\bigl(\btau_\phi^{-1}(\sT) \wedge \btau_\phi^{-1}(\sT')\bigr), \quad \sT\wedge\sT' := \btau_\phi\bigl(\btau_\phi^{-1}(\sT)\vee\btau_\phi^{-1}(\sT')\bigr)
\]
\[ \sM\vee\sM' := \bmu_\phi\bigl(\bmu_\phi^{-1}(\sM)\wedge\bmu_\phi^{-1}(\sM')\bigr), \quad \sM\wedge\sM' := \bmu_\phi\bigl(\bmu_\phi^{-1}(\sM)\vee\bmu_\phi^{-1}(\sM')\bigr).
\]
As such, $\btau$ and $\bmu$ become lattice isomorphisms. We can view $\sMRepr$ and $\sMTess$ as functors assigning dynamical systems their Morse representations and Morse tesselations respectively. Define an action on morphisms using $\btau$ and $\bmu$:
\[
h\in \hom(\phi, \psi), \quad\sMTess(h) := \btau^{-1}\circ\subF\sANbhd(h)\circ\btau,\quad \sMRepr(h) := \bmu^{-1}\circ\subF\sAtt(h)\circ\bmu.
\]
$\btau$ and $\bmu$ become natural transformations in this way. The result is the following diagram of functors:

\begin{equation}\label{funcdiags2}
\begin{tikzcd}[column sep=huge]
\sDS(\T,X)
  \arrow[bend left=50]{r}[name=U,label=above:$\subF\sANbhd$]{}
  \arrow[bend right=50]{r}[name=D,label=below:$\sMTess$]{} &
\sLat
  \arrow[shorten <=10pt,shorten >=10pt,Leftrightarrow,to path={(U) -- node[label=left:$\btau$] {} (D)}]{}
  \arrow[shorten <=10pt,shorten >=10pt,Leftrightarrow,to path={(U) -- node[label=right:$\btau^{-1}$] {} (D)}]{}
\end{tikzcd}
\qquad
\begin{tikzcd}[column sep=huge]
\sDS(\T,X)
  \arrow[bend left=50]{r}[name=U,label=above:$\subF\sAtt$]{}
  \arrow[bend right=50]{r}[name=D,label=below:$\sMRepr$]{} &
\sLat
  \arrow[shorten <=10pt,shorten >=10pt,Leftrightarrow,to path={(U) -- node[label=left:$\bmu$] {} (D)}]{}
  \arrow[shorten <=10pt,shorten >=10pt,Leftrightarrow,to path={(U) -- node[label=right:$\bmu^{-1}$] {} (D)}]{}
\end{tikzcd}
\end{equation}



%


Let $\bm{\mathsf{\Delta}}$ be  the natural transformation defined by
\[
\bm{\mathsf{\Delta}} := \bmu\circ \bm{\mathsf{\omega}} \circ \btau^{-1}.
\]

Given a Morse tessellation $\sT$ we obtain a Morse representation via $\sT \mapsto \bm{\mathsf{\Delta}}_\phi(\sT) =: \sM$.
By the above correspondences $\sM = \bmu_\phi(\sA)$ with $\sA = \bmu_\phi^{-1}(\bm{\mathsf{\Delta}}_\phi(\sT))$ and
$\sT = \btau_\phi(\sU)$ with $\sU = \btau^{-1}_\phi(\sT)$
and $\bm{\mathsf{\omega}}_\phi(\btau^{-1}_\phi(\sT)) = \sA$.
For the elements $\sU$ and $\bm{\mathsf{\omega}}_\phi(\sU)$ we have the homomorphism $\omega_\phi\colon \sU \twoheadrightarrow \sA=\bm{\mathsf{\omega}}_\phi(\sU)$. This implies the following relation for Morse tessellations and Morse representations. Associated with $\sU$ we have $\sT=\btau_\phi(\sU)$ maps to $\sM = \bm{\mathsf{\Delta}}_\phi(\sT)$ and we obtain a canonical embedding $\iota\colon\sM \hookrightarrow \sT$,
which will be called a \emph{tessellated Morse decomposition}. The embedding $\iota$ is induced by the homomorphism $\omega_\phi$.
The continuation frame is given by the following diagrams:
\begin{equation}
    \label{dualsub12}
\begin{tikzcd}[column sep=huge]
\sDS(\T,X)
  \arrow[bend left=50]{r}[name=U,label=above:$\sMTess$]{}
  \arrow[bend right=50]{r}[name=D,label=below:$\sOrdTess$]{} &
\sLat
  \arrow[shorten <=10pt,shorten >=10pt,Rightarrow,to path={(U) -- node[label=right:$\bm{\mathsf{\iota}}$] {} (D)}]{}
\end{tikzcd}
\quad\quad
\begin{tikzcd}[column sep=huge]
\sDS(\T,X)
  \arrow[bend left=50]{r}[name=U,label=above:$\sMTess$]{}
  \arrow[bend right=50]{r}[name=D,label=below:$\sMRepr$]{} &
\sLat
  \arrow[shorten <=10pt,shorten >=10pt,Rightarrow,to path={(U) -- node[label=right:$\bm{\mathsf{\Delta}}$] {} (D)}]{}
\end{tikzcd}
\end{equation}
and establishes the continuation frame
 $\bigl(\sMRepr,\sMTess,\bm{\mathsf{\Delta}} \bigr)$. 
Here the universe is given by $\sOrdTess(X)$ which is the (complete) bounded lattice
of finite ordered tessellations of $X$. 
In Section \ref{sec:shofsec} we 
define the Morse representation sheaf which generalizes  \cite{fran3}.

\begin{remark}
\label{dualityforsub}
The lattice operations on $\sMRepr(\phi)$ and $\sMTess(\phi)$ are motivated by the duality between (Priestley) pre-orders and sublattices, i.e. for a bounded distributive lattice $\sL$ there exist an anti-isomorphism to the lattice of
Priestley pre-order on the Priestley $\sfS\sL$ and the lattice $\sub~\sL$ of sublattices of $\sL$, cf.\ \cite[Thm.\ 3.7]{Schmid}, \cite[Thm.\ 2.5]{Cignoli}.
\end{remark}


\section{Algebraic constructions}
\label{sec:algconstr}
In this section we incorporate the binary operations of lattices, groups, rings, etc. and augment the \etale spaces with these operations.

\subsection{Binary operations and lattices}
\label{ssc:bin1}
Given two \etale spaces $(\mathrm{\Pi}, \pi), (\mathrm{\Pi}', \pi')$ over a topological space. Define 
\[
\mathrm{\Pi}\bullet\mathrm{\Pi}' := \big\{(\sigma, \sigma') \in \mathrm{\Pi}\times\mathrm{\Pi}' : \pi(\sigma) = \pi'(\sigma')\big\},
\]
which is also an \etale space with the same projection map and the product topology, cf.\ \cite[Sect.\ 2.5]{Tenni}. 

\begin{proposition}\label{prop:shprod}
Suppose the category $\cC$ has concrete binary products.
Let $(\sG,\sE,\sw), (\sG',\sE',\sw')$ be $\cC$-continuation frames on $\cD$. Then, $(\sG\times\sG',\sE\times\sE',\sw\times\sw')$ is  a continuation frame and the map 
\[
g\colon \mathrm{\Pi}[\sG\times \sG']\to \mathrm{\Pi}[\sG]\bullet \mathrm{\Pi}[\sG'],\qquad \big(\phi, (S,S')\big)\mapsto \bigl((\phi, S),(\phi, S')\bigr)
\]
is a homeomorphism.
\end{proposition}

\proof
Since both $\sw$ and $\sw'$ are surjective componentwise, so is $\sw\times\sw'$. For the openness conditions:
\[
\mathrm{\Phi}[\sG\times \sG';(U,U')] = \big\{\phi\in \sob(\cD) ~|~ (U,U')\in (\sG\times \sG')(\phi)\big\}=\mathrm{\Phi}[\sG;U]\cap \mathrm{\Phi}[\sG';U'].
\]
\[
\begin{aligned}
\{\phi \in \cD : (\sw\times \sw')_\phi(U_1, U_2) = (\sw\times \sw')_\phi(U_1', U_2')\} = &\{\phi \in \cD : \sw_\phi(U_1) = \sw_\phi(U_1')\} 
\\ &\cap \{\phi \in \cD : \sw'_\phi(U_2) = \sw'_\phi(U_2')\}
\end{aligned}
\]
which is open. Bijectivity of $g$ is immediate; it remains to be shown that $g$ is continuous and open on subbasis elements. Let $U,U'\in \sPow$ and $\mathrm{\Omega},\mathrm{\Omega}'$ open in $\mathrm{\Phi}[\sE; U]$ and $\mathrm{\Phi}[\sE'; U']$ respectively. 


Then, 
$$
g^{-1}\Big(\mathrm{\Theta}[\sw;U](\mathrm{\Omega})\times \mathrm{\Theta}[\sw; U'](\mathrm{\Omega}')\cap \mathrm{\Pi}[\sG]\bullet \mathrm{\Pi}[\sG']\Big) =\mathrm{\Theta}\big[\sw\times\sw; (U, U')\big](\mathrm{\Omega}\cap \mathrm{\Omega}'),
$$
which is open. Similarly, letting $U, U'\in \sPow$ and $\mathrm{\Omega}\subset \mathrm{\Phi}[\sE\times\sE'; (U,U')]$ open we have: 
$$
g\Big(\mathrm{\Theta}[\sw\times\sw'; (U,U')]\Big) = \mathrm{\Theta}[\sw; U](\mathrm{\Omega})\times \mathrm{\Theta}[\sw', U'](\mathrm{\Omega}) \cap \mathrm{\Pi}[\sG]\bullet \mathrm{\Pi}[\sG']
$$
which proves  that $g$ is an open map and therefore a homeomorphism. 
\eproof

\begin{remark}
The universe functor in the product continuation frame is the product $\sPow\times \sPow'$.
\end{remark}

We can apply Propositions \ref{prop:shmor} and  \ref{prop:shprod} to the 
the $\sBDLat$-continuation frames $(\sAtt,\sANbhd,\omega)$ 
and $(\sRep,\sRNbhd,\alpha)$
to interpret lattice operations as morphisms of \etale spaces. This permits us to regard
$\mathrm{\Pi}[\sAtt]$ and $\mathrm{\Pi}[\sRep]$ as
 $\sBDLat$-valued.
 
For example $\wedge_\phi\colon \sAtt(\phi) \times \sAtt(\phi) \to \sAtt(\phi)$ given by $(A,A') \mapsto A\wedge A'$ forms a natural transformation \[
\wedge\colon \sAtt\times\sAtt \Rightarrow \sAtt.
\]
From Proposition \ref{prop:shmor}
$\widetilde\wedge\colon \sANbhd \times \sANbhd \Rightarrow \sANbhd$, 
given by $(U,U') \mapsto U\cap U'$ with $\omega_\phi(U) = A$ and
$\omega_\phi(U') =A'$,
acts as a lift for $\wedge$
which yields
 an \etale space morphism from $\mathrm{\Pi}[\sAtt\times\sAtt]$ to $\mathrm{\Pi}[\sAtt]$.
Combining the latter with Proposition \ref{prop:shprod} yields an \etale space morphism:
\[
\mathrm{\Pi}[\wedge]\colon \mathrm{\Pi}[\sAtt]\bullet \mathrm{\Pi}[\sAtt] \to \mathrm{\Pi}[\sAtt],\quad \bigl((\phi,A),(\phi,A')\bigr) \mapsto (\phi,A\wedge A'),
\]
which establishes $\wedge$ as a continuous binary operation on $\mathrm{\Pi}[\sAtt]$. The same can be achieved for $\vee$.
Absorption, distributivity, and associativity follows immediately from the properties of $\wedge$ and $\vee$.
It remains to show that the assignments of the neutral elements
\[
\phi \mapsto (\phi,\emptyset) \in \mathrm{\Pi}[\sAtt],\qquad \phi \mapsto \bigl(\phi,\omega_\phi(X)\bigr)\in
\mathrm{\Pi}[\sAtt],
\]
are continuous. By composing the constant sections $\mathrm{\Theta}[\sId; \emptyset], \mathrm{\Theta}[\sId; X]\colon \sDS(\T,X) \to \mathrm{\Pi}[\sANbhd]$ with the continuous map $\mathrm{\Pi}[\omega]$ we obtain the desired result.
A similar argument holds for $\sRep$. 
Consequently, we have established $\mathrm{\Pi}[\sAtt]$ and $\mathrm{\Pi}[\sRep]$ as $\sBDLat$-valued \etale spaces. 
We later explore abelian structures and ring structures which are used in the treatment of sheaf cohomology.

\subsection{The Conley form}
\label{ssc:conley}
Recall that the Conley form assigns to two attractors $A,A'\in \sAtt(\phi)$ an associated invariant set $(A,A')\mapsto \CF_\sAtt(A,A') := A\cap A'^*$, where
$A'^*\in \sRep(\phi)$ is dual to $A'$ in the sense that $A'^* = \alpha_\phi(U^c)$ where
$U\in \sANbhd(\phi)$ with $\omega_\phi(U) = A'$. The repeller $A'^*$ is called the \emph{dual repeller} to $A'$.
The Conley form has a universal property in the sense that it is a unique extension
of set-difference for bounded, distributive lattices, cf.\ \cite{KMV-1c}. \label{pg:conleyform}

A \emph{Morse neighborhood} is a subset $T\subset X$ given by $T=U\cap V$ with $U\in \sANbhd(\phi)$ and $V\in \sRNbhd(\phi)$. It holds that $\Inv_\phi(T) = \omega_\phi(U) \cap \alpha_\phi(V):= M$ which is called a \emph{Morse set}. \label{pg:morseset} By construction $M \subset \Int T$, cf.\ \cite{KMV-1c}.
The Morse sets are denoted by $\sMorse(\phi)$ which is a bounded, meet-semilattice with binary operation $M\wedge M' := \Inv_\phi(M\cap M')$. The Morse neighborhoods are denoted by $\sMNbhd(\phi)$ and  form a bounded, meet-semilattice with intersection as binary operation. Both $\emptyset$ and $\omega_\phi(X)$ are neutral elements.
As before $\Inv\colon \sMNbhd \Rightarrow \sMorse$ is a stable $\MeetLat$-structure, where $\cC= \MeetLat$ is the category of bounded, meet-semilattices. The triple $(\sMorse,\sMNbhd,\Inv)$ is a $\MeetLat$-continuation frame and by the general theory in Section \ref{sec:ShStr} we obtain the $\MeetLat$-\etale space $\mathrm{\Pi}[\sMorse]$ of Morse sets.

By the same token we can treat the Conley form as natural transformation 
\[
\CF_\sAtt\colon \sAtt\times\sAtt \Longrightarrow \sMorse,
\]
where the functor $\sMorse$ assigns the bounded, meet-semilattice of Morse sets to $\phi$. By Proposition \ref{prop:shprod}
 this leads to 
a continuous operation
\[
\mathrm{\Pi}[\CF_\sAtt]\colon \mathrm{\Pi}[\sAtt]\bullet \mathrm{\Pi}[\sAtt] \to \mathrm{\Pi}[\sMorse]\quad \bigl((\phi, A)(\phi, A')\bigr) \mapsto \bigl(\phi, \CF_\sAtt(A,A')\bigr).
\]
The map $\mathrm{\Pi}[\CF_\sAtt]$ will play a role in setting up the appropriate algebraic construction for sheaf cohomology.

A variation on the Conley form is the \emph{symmetric Conley form} which is defined as follows:
\[
(A,A')\mapsto \CF^*(A,A') := \CF_\sAtt(A\cup A',A\wedge A') = (A\cap A'^*) \cup (A'\cap A^*).
\]
For the symmetric Conley form we use the following notation: $(A,A') \mapsto A+A'$.
\begin{remark}
The range of the symmetric Conley  form is the same as for the standard Conley form. Indeed, if $A'\subset A$ then $\CF^*(A,A') = \CF_\sAtt(A,A')$. For any pair of attractor $A,A'$ absorption implies that $\CF_\sAtt(A,A') = 
\CF_\sAtt(A,A\wedge A')$ which shows that the Conley form can always be determined from nested pairs, in which case the standard and symmetric Conley forms coincide.
\end{remark}


\subsection{The algebra of attractors}
\label{ssc:alg-att}
In this section we take a closer look at the algebraic structure of attractors.
Algebraic structures and in particular (abelian) group structures are important for the (co)homological theory of sheaves.
Our starting point is the lattice of attractors $\sAtt(\phi)$ of a fixed dynamical system $\phi$, which is a bounded, distributive lattice. 
Before treating the lattice of attractors we first consider bounded, distributive lattices from a more general point of view.

Let $(\sL,\wedge,\vee,0,1)$ be bounded, distributive lattice. Then, by the Priestley representation theorem, $\sL$ is isomorphic to the lattice $\cO^{\rm clp}(\sfS \sL)$ of clopen downsets in the ordered topological space $\sfS \sL$, the spectrum of $\sL$, whose points are the prime ideals in $\sL$  and whose topology is the Priestley topology. The latter is a zero-dimensional, compact Hausdorff space. The Boolean algebra $\B\sL$ of clopen sets in $\sfS\sL$ is called the \emph{Booleanization}, or \emph{minimal Boolean extension} of $\sL$, and $j\colon \sL\to \B\sL$ is a lattice-embedding given by $j(a) = \{I\in \sfS\sL~|~a\not\in I\}$. 
The Priestley topology is generated by the basis consisting of elements
$j(a)\smin j(b)$, $a,b\in \sL$.
This construction is functorial; we have the \emph{Booleanization functor} $\sB \colon \sBDLat \to \sBool$.

Boolean algebras can be given the structure of a ring, i.e.\ given a Boolean algebra $(\sB,\wedge,\vee,^c,0,1)$ define
\[
a+b := (a\wedge b^c) \vee (b\wedge a^c) \quad\hbox{(symmetric difference) and }\quad
a\cdot b := a\wedge b.
\]
Then, $(\sB,+,-,0,1)$ is a commutative, idempotent ring (idempotency with respect to multiplication). One retrieves the Boolean algebra structure via $a\vee b = a+b+a\cdot b$. We can formulate this as a faithful functor $\sI \colon \sBool \to \sRing$ from the category of Boolean algebras to the category of rings.
\[
\begin{tikzcd}
\sBDLat \arrow{rr}{\sB} && \sBool \arrow{rr}{\sI} && \sRing
\end{tikzcd}
\]

Define the ring  obtained from Booleanization of $\sL$ as the \emph{(Boolean) lattice ring} of $\sL$: \label{pg:latticering}
\begin{equation}
    \label{BLring}
    \sR\sL := (\sI\circ\sB)(\sL)
\end{equation}
the composition is also denoted by $\sR := \sI\circ \sB$.
This is the natural way to give an abelian structure to a bounded distributive lattice $\sL$.
We note that $\sR\sL$ is in general not free as additive $\Z_2$-module (vector space), nor as multiplicative monoid.
Since $\sL$ may be regarded as a (commutative) monoid with respect to $\wedge$ we can use the monoid ring construction, cf. \cite{Lang,Bergman,Tarizadeh}, to define the $\Z_2$-algebra 
$\monZ\sL$, which is a free $\Z_2$-module (vector space). 
The elements of $\monZ\sL$ are finite formal sums $\sum_ia_i$, $a_i\in \sL$, with the additional requirement that $2a=a+a=0$.
Multiplication is given by $a\cdot b := a\wedge b$. We refer to $\monZ\sL$ as the \emph{lattice algebra} of $\sL$  which was first introduced in the context of minimal Boolean extension by MacNeille \cite{MacNeille}. \label{pg:latticealg}
The lattice algebra $\monZ\sL$ is clearly a Boolean ring as is the lattice ring.
The $\Z_2$-monoid ring construction defines a covariant functor
\[
\begin{tikzcd}
\sBDLat \arrow{rr}{\monZ}  && \sRing.
\end{tikzcd}
\]
The analog of the homomorphism $j\colon \sL \to \B\sL$ is now given by the ring homomorphism:
\[
j\colon \monZ\sL \to \sR\sL,\qquad j\Bigl(\sum_i a_i \Bigr) := 
\sum_i j(a_i) = \sum_i\alpha_i.
\]
By definition $j(a\wedge b) = j(a) \cap j(b)$, which makes $j$ an algebra homomorphism.
The image of the generators of $\monZ\sL$ in $\sR\sL$ are downsets in $\sfS\sL$ and via the induced $\vee$ operation the lattice $\sL$ can be retrieved.
By construction 
\[
\begin{aligned}
j(a) + j(b) &=\bigl(j(a)\cup j(b)\bigr)\smin \bigl(j(a)\cap j(b)\bigr)
= j(a\vee b)\smin j(a\wedge b)\\
&= \CF^\sigma(a\vee b, a\wedge b),
\end{aligned}
\]
When $b\subset a$, then $j(a)+j(b) = \CF^\sigma(a,b)$ and thus the sums $j(a)+j(b)$ exhaust the range of the Conley form $\CF^\sigma\colon \sL \times \sL \to \sR\sL$.
We define the set $\sC\sL := \bigl\{\CF^\sigma(a,b)~|~a,b\in \sL\bigr\}$ as the \emph{convexity monoid}:  for $\sigma,\sigma'\in \sC\sL$ we have $\sigma \cdot \sigma' = \CF^\sigma(a,b)\cap \CF^\sigma(a',b') = \CF^\sigma(a\wedge a',b\vee b')\in \sC\sL$
and $\sigma \cdot 1 = \CF^\sigma(a,b)\cap \CF^\sigma(1,0) = \CF^\sigma(a\wedge 1,b\vee 0)= \CF^\sigma(a,b) = \sigma$. Clearly the embedding
$i\colon \sC\sL \to \sR\sL$
is a monoid homomorphism.

\begin{lemma}
\label{ontoas}
The ring homomorphism $j\colon \monZ\sL \to \sR\sL$ is surjective.
\end{lemma}

\proof
Let $\gamma \in \sR\sL$, then by a property of the Priestley topology we can express
$\gamma$ as finite union of the form: $\gamma = \bigcup_i \alpha_i\smin \alpha_i'$,
with $\alpha_i = j(a_i)$, $\alpha_i' = j(a'_i)$ and $a_i,a'_i \in \sL$.
The objective is to prove that $\gamma$ is in the range of $j$.
Consider $\alpha\smin \beta \cup \gamma\smin \delta$. We may assume without loss of generality that $\beta\subset \alpha$ and $\delta\subset \gamma$. Indeed, use $\alpha\smin \beta = \alpha\smin (\alpha\cap \beta)$.
Therefore, $\alpha\smin \beta \cup \gamma\smin \delta = (\alpha+\beta) \cup (\gamma+\delta)$, and
\[
\begin{aligned}
    (\alpha+\beta) \cup (\gamma+\delta) &= \alpha+\beta+\gamma+\delta + (\alpha+\beta)\cap (\gamma+\delta)\\
    &= \alpha+\beta+\gamma+\delta + (\alpha\cap \gamma) + (\alpha\cap \delta) + (\beta\cap \gamma) + (\beta\cap \delta),
\end{aligned}
\]
which corresponds to a sum of $j$-images of elements in $\sL$.
We conclude that $\gamma = \bigcup_i \alpha_i\smin \alpha_i' = \sum_k \tilde \alpha_k = \sum_k j(\tilde a_k)$, $\tilde a_k\in \sL$, which proves that $\gamma$ is in the range of $j$.
\eproof

We now have the following short exact sequence:
\begin{equation}
    \label{shortexact1}
\begin{diagram}
\node{0}\arrow{e,l}{} \node{\ker j}\arrow{e,l,V}{\subset} \node{\monZ\sL}\arrow{e,l,A}{j}\node{\sR\sL}\arrow{e,l}{}\node{0,}
\end{diagram}
\end{equation}
and since
the kernel $\ker j$ is an ideal in $\monZ\sL$ the first isomorphism theorem for rings yields
\[
\sR\sL \cong \frac{\monZ\sL}{\ker j},
\]
where the isomorphism is given $\sum_i a_i + \ker j \mapsto \sum_i \alpha_i$.
If we regard $j$ as a module (vector space) homomorphism then 
 both $\ker j$ and $\monZ\sAtt(\phi)$ are free $\Z_2$-modules. 
The ideal $\ker j$ can be characterized as follows.
\begin{lemma}
\label{ontoas2}
$\ker j$ is the ideal freely generated by elements of the form $a\vee b + a+b+a\cdot b$.
\end{lemma}

\proof
For an element $a\vee b + a+b+a\cdot b$ we have that 
\[
\begin{aligned}
j(a\vee b + a+b+a\cdot b) &= j(a\vee b) + j(a) + j(b) + j(a\cdot b)\\
&= j(a) \cup j(b) + j(a) + j(b) + j(a) \cap j(b)\\
&= 2\bigl(j(a) \cup j(b)\bigr) =\emptyset, 
\end{aligned}
\]
which proves that finite sums of elements of the form $a\vee b + a+b+a\cdot b$
are contained in $\ker j$.
Let $j\Bigl(\sum_i a_i \Bigr) = \sum_i\alpha_i = \emptyset$, then the sum must have an even number of terms. We can rearrange the sequence to a filtration $
\alpha_1' \subset \cdots \subset \alpha'_{2m}$ such that $\sum_i\alpha_i
= \sum_i\alpha'_i = \emptyset$. Consequently $\alpha'_{2k-1}+\alpha'_{2k} = \emptyset$ for $k=1,\cdots,m$, i.e. $\alpha'_{2k-1}=\alpha'_{2k}$ for all $k$.
In order to have distinct elements mapping to $\alpha'_{2j-1}=\alpha'_{2j}$
we have 
\[
j(b_k+c_k) = \alpha'_{2k-1}=\alpha'_{2k} = j(b_k\vee c_k+b_k\cdot c_k),
\]
which proves that element in $\ker j$ is contained in the set of formal sums 
generated by terms of the form $a\vee b + a+b+a\cdot b$.
\eproof

Let us return to the lattice of attractors $\sAtt(\phi)$. Define the \emph{attractor ring} of a dynamical system $\phi$ as $\sR\sAtt(\phi):= (\sI\circ\B)\bigl(\sAtt(\phi)\bigr)$ as the Boolean lattice ring of $\sAtt(\phi)$. This is the natural way to give an abelian structure to the attractors of a dynamical system. Via the monoid ring construction  we obtain
the algebra $\monZ\sAtt(\phi)$ which 
is called the \emph{free attractor ring} over $\Z_2$.

\section{Continuation sheaves}
\label{sec:shofsec}
From an abstract continuation frame we have shown how to build an \etale space which encodes the continuation of the unstable structure of interest. This \etale space $\mathrm{\Pi}[\sG]$ connects the topology of the base space to the algebraic structure of $\sG$. To study this connection, we shift our attention to the sheaves of sections generated by the \etale spaces of continuation frames. While sheaves and \etale spaces are equivalent from a categorical viewpoint, the theory of sheaves contributes a rich algebraic toolkit to our study of continuation. Perhaps most prominent is the idea of sheaf cohomology. 

\begin{definition}\label{defn:sheafofsec}
Let  $(\sG,\sE,\sw)$ be a  continuation frame and let $(\mathrm{\Pi}[\sG],\pi)$
be the associated \etale space  over $\cD$. A \textit{section} in $\mathrm{\Pi}[\sG]$ over an open set $\mathrm{\Omega}$ in $\cD$ is a continuous map $\sigma\colon \mathrm{\Omega} \to \mathrm{\Pi}[\sG]$ such that
\[
\pi\circ \sigma = \id.
\]
The set of all sections over $\mathrm{\Omega}$ is denoted by $\scrS^\sG(\mathrm{\Omega})$. The set of global sections, $\scrS^\sG(\cD),$ is also written as $\mathrm{\Gamma}(\scrS^\sG)$.
\end{definition}

 The presheaf
\[
\scrS^\sG\colon \cO(\cD)\to \sSetCat,
\]
where $\cO(\cD)$ is the category of open sets in $\cD$, is in fact a sheaf over $\cD$ and is
 called the \emph{sheaf of sections}, 
  cf.\ \cite[Sect.\ 2.2C]{Tenni}.
  A stalk of the sheaf $\scrS^\sG$ at $\phi \in \cD$ is the object $\sG(\phi)$. 
By considering sections in $\mathrm{\Pi}[\sE]$ we obtain the sheaf of sections   $\scrS^\sE$ and
stalks in $\scrS^\sE$ are denoted by $\sE(\phi)$, cf.\ \cite[Prop.\ 3.6]{Tenni}.

\begin{remark}\label{stalks}
There are multiple equivalent ways to define stalks. The sheaf-theoretic definition is a direct limit $\scrS^\sG_\phi:=\varinjlim \scrS^\sG(U)$ over open neighborhoods $U$ of a point $\phi$. Equivalently, for an \etale space $\pi\colon\mathrm{\Pi}[\sG] \to \cD$ the stalk at $\phi$ can be defined as $\pi^{-1}(\phi)$. In our setting, we make the identification between $\pi^{-1}(\phi)$ and $\sG(\phi)$.
\end{remark}

\begin{lemma}
\label{charsec2}
Let $(\sG,\sE,\sw)$ be a continuation frame and let 
$\sigma \colon \mathrm{\Omega} \to \mathrm{\Pi}[\sG]$ be a map with property that $\pi\circ \sigma = \id$ on $\mathrm{\Omega}$ (open).
Then, $\sigma$ is a section in $\mathrm{\Pi}[\sG]$ if and only if for every $\phi\in \mathrm{\Omega}$ there exists an open neighborhood $\mathrm{\Omega}_0\subset \mathrm{\Omega}$ of $\phi$ and $U\in \sE(\phi)$, such that 
$\restr{\sigma}{\mathrm{\Omega}_0} = \restr{\mathrm{\Theta}[\sw;U]}{\mathrm{\Omega}_0}$.
\end{lemma}

\proof
This follows immediately from the definition of sheaves.
\eproof

 Sections therefore act locally like $\mathrm{\Theta}[\sw;U]$.  Following this intuition, observe that $\mathrm{\Theta}[\sw; U]$ is a section in $\mathrm{\Pi}[\sG]$ over $\mathrm{\Phi}[\sE;U]$.

The above lemma means we only need to verify that a candidate section locally agrees with $\mathrm{\Theta}[\sw;U]$ for a \textit{particular} $U\in \sE(\phi)$ for some $\phi\in \sob(\cD)$, rather than all such $U$. 
By the same token sections $\sigma\colon \mathrm{\Omega} \to \mathrm{\Pi}[\sE]$ are given locally by $\mathrm{\Theta}[\sId;U]$, i.e. $\sigma(\phi) = (\phi,U)$.

From the construction of the sheaves $\scrS^\sE$ and $\scrS^\sG$ we have the following property of the natural transformation $\sw$:
\[
\begin{diagram}
\node{\scrS^\sE(\mathrm{\Omega})}\arrow{s,l}{\rho_{\mathrm{\Omega}',\mathrm{\Omega}}}\arrow{e,l}{\sw(\mathrm{\Omega})}\node{\scrS^\sG(\mathrm{\Omega})}\arrow{s,r}{\rho_{\mathrm{\Omega}',\mathrm{\Omega}}}\\
\node{\scrS^\sE(\mathrm{\Omega}')}\arrow{e,l}{\sw(\mathrm{\Omega}')}\node{\scrS^\sG(\mathrm{\Omega}')}
\end{diagram}
\]
where $\sw(\mathrm{\Omega})\colon \scrS^\sE(\mathrm{\Omega}) \to \scrS^\sG(\mathrm{\Omega})$ is defined by $\sigma\mapsto 
\mathrm{\Pi}[\sw]\circ \sigma$, $\mathrm{\Omega}$ open, and similarly for $\mathrm{\Omega}'\subset \mathrm{\Omega}$.
The maps $\rho_{\mathrm{\Omega}',\mathrm{\Omega}}$ are the restriction maps.
The latter defines a \emph{morphism of sheaves} $\sw\colon \scrS^\sE \to \scrS^\sG$.
Since $\sw$ yields the
 stalkwise surjections $\sw_\phi\colon \sE(\phi) \twoheadrightarrow \sG(\phi)$, we say that the morphism $\sw\colon \scrS^\sE \to \scrS^\sG$ is surjective.  

 

\subsection{Attractor sheaves}\label{ssc:attsheaves}


In Section \ref{ssc:bin1} we constructed \etale spaces in various categories such as bounded, distributive lattices. The above sheaf of sections construction creates sheaves with values in these same categories.
For example, the $\cC$-structure $(\sAtt,\sANbhd,\omega)$
yields 
the \etale morphism $\mathrm{\Pi}[\omega]\colon \mathrm{\Pi}[\sANbhd] \twoheadrightarrow \mathrm{\Pi}[\sAtt]$ and the $\sBDLat$-valued sheaves 
 $\scrS^\sANbhd\colon \cO(\sDS(\T,X)) \to \sBDLat$ and
 $\scrS^\sAtt\colon \cO(\sDS(\T,X)) \to \sBDLat.$ 
Hence, we obtain the following morphism of sheaves
\[
\omega\colon \scrS^\sANbhd \twoheadrightarrow \scrS^\sAtt
\]
that assigns to every section $\sigma\colon\mathrm{\Omega} \to \mathrm{\Pi}[\sANbhd]$ the section
$\mathrm{\Pi}[\omega](\sigma)\colon \mathrm{\Omega} \to \mathrm{\Pi}[\sAtt]$.
The  sheaf $\scrS^\sAtt$ is called the \emph{attractor lattice sheaf} \label{pg:attlatsheaf}
over $\sDS(\T,X)$.

Similarly, we have the morphism of sheaves
\[
\alpha\colon \scrS^\sRNbhd \twoheadrightarrow \scrS^\sRep,
\]
where $\scrS^\sRep$ is the  \emph{repeller lattice sheaf}.
Duality between $\sAtt$ and $\sRep,$ as well as between $\sANbhd$ and $\sRNbhd,$ yields the following commutative diagram of sheaves:
\[
\begin{diagram}
\node{\scrS^\sANbhd}\arrow{s,l,A}{\omega}\arrow{e,l,<>}{^c}\node{\scrS^\sRNbhd}\arrow{s,r,A}{\alpha}\\
\node{\scrS^\sAtt}\arrow{e,l,<>}{^*}\node{\scrS^\sRep}
\end{diagram}
\]
The Conley form on \etale spaces in Section \ref{ssc:conley} gives rise to  the $\MeetLat$-valued sheaf $\scrS^\sMorse\colon \cO(\sDS(\T,X)) \to \MeetLat$. 

For $\scrS^\sAtt$ and $\scrS^\sMorse$, lattice-valued and meet lattice-valued sheaves respectively,
we need to 
 construct a suitable abelian structure in order to define their sheaf cohomology.
 In general,  let $\cC$ be a concrete category, and let $\sK\colon \cC \to \sRing$  a covariant functor. Moreover, let $\scrS\colon \cO(\cD) \to \cC$ be a $\cC$-valued sheaf over a topological category $\cD$.
For every open set $\mathrm{\Omega}$ in $\cD$ we have  the ring $\sK\scrS(\mathrm{\Omega})$ and the ring-valued presheaf
\[
\sK\scrS\colon\cO(\cD) \to \sRing.
\]
Via the sheafification functor $^\#\colon \sPrSh_\cC(X) \to \sSh_\cC(X)$, 
we then obtain the sheaf
\[
\scrK:= \bigl(\sK\scrS\bigr)^\#\colon\cO(\cD) \to \sRing.
\]
In Section \ref{ssc:alg-att} we consider two functors that take values in the category of rings: the \emph{Boolean ring functor} $\sR = \sI\circ \sB\colon \sBDLat\to \sRing$ and
 the \emph{monoid ring functor} $\Z_2 \colon \sMonoid \to \sRing$. 
In the case of the sheaf of attractors we obtain the abelian sheaves:
\begin{equation}
    \label{absheaves1}
    \scrA := \bigl(\sR\scrS^\sAtt\bigr)^\#\colon \cO(\sDS(\T,X)) \to \sRing,
\end{equation}
and 
\begin{equation}
    \label{absheaves7}
   \gAtt  := \bigl(\monZ\scrS^\sAtt\bigr)^\#\colon \cO(\sDS(\T,X)) \to \sRing,
\end{equation}
which are called the \emph{attractor sheaf} and the \emph{free attractor sheaf} 
over $\sDS(\T,X)$ respectively.

\begin{remark}\label{rmk:freeMorseshef}
Similar constructions can be applied to sheaves in other dynamical contexts. 
The construction via the functor $\monZ\colon \sMonoid \to \sRing$ works for all of the above examples since both bounded, distributive lattices and semilattices compose subcategories of the category of (commutative) monoids. Of particular interest is the
\emph{free Morse sheaf}
\[
\gMorse := \bigl(\monZ\scrS^\sMorse\bigr)^\#\colon \cO(\sDS(\T,X)) \to \sRing.
\]
\end{remark}

\begin{remark}
The short exact sequence in \eqref{shortexact1} for $\sAtt$ yields:
\label{fundseq}
\begin{equation}
    \label{shortexact2}
\begin{diagram}
\node{0}\arrow{e,l}{} \node{\ker {\mathcal{j}}}\arrow{e,l,V}{\subset} \node{\gAtt}\arrow{e,l,A}{{\mathcal{j}}}\node{\scrA}\arrow{e,l}{}\node{0,}
\end{diagram}
\end{equation}
where the stalks
\[
\scrA_\phi = \sR\sAtt(\phi)\quad \text{and} \quad \gAtt_\phi = \monZ\sAtt(\phi)
\]
are the attractor ring  at $\phi$ and the free  attractor ring over $\Z_2$ at $\phi$ respectively. We define  this to be the \emph{fundamental short exact sequence of the attractor sheaf}.
The fundamental exact sequence allows us
 to relate the sheaves $\scrA$ and $\gMorse$. The generators of $\gAtt$ and the ring structure of $\scrA$ recover the attractor lattice sheaf  $\scrS^\sAtt$.
\end{remark}


\begin{remark}
\label{diffdef1}
An alternative way to define the sheaves $\scrA$ and $\gAtt$ is a direct definition via \etale spaces. In the case of $\gAtt$ we define an \etale space $\mathrm{\Pi}[\monZ\sAtt]$ using the stable $\cC$-structure $\bigl(\monZ\sAtt,\monZ\sANbhd,\monZ(\omega) \bigr)$ via the monoid ring functor. The stability follows from the fact that stability is preserved under free sums. We obtain the \etale space $\pi\colon \mathrm{\Pi}[\monZ\sAtt]\to  \sDS(\T,X)$ and the associated sheaf of sections $\scrS^{\monZ\sAtt}$. It holds that 
$\scrS^{\monZ\sAtt} \cong \gAtt$.
For the Boolean ring functor it is more involved to prove that $\bigl(\sR\sAtt,\sR\sANbhd,\sR(\omega) \bigr)$ is a continuation frame, but $\scrS^{\sR\sAtt} \cong \scrA$.
\end{remark}

\subsection{Finite sublattice and Morse representation sheaves}
Following Lemma \ref{contframe}, we also have an $\sLat$-valued \etale space $\mathrm{\Pi}[\subF\sAtt]$, encoding the continuation of finite sublattices of attractors. As earlier, we can consider the corresponding sheaf of sections $\scrS^{\subF\sAtt}\colon \sDS(\T,X) \to \sLat$. For an open set $\mathrm{\Omega} \subset \sDS(\T,X)$, a section in $\scrS^{\subF\sAtt}(\mathrm{\Omega})$ assigns to each dynamical system $\phi \in \mathrm{\Omega}$ a finite sublattice of $\sA\subset \sAtt(\phi)$. The lattice operations on $\scrS^{\subF\sAtt}(\mathrm{\Omega})$, on stalks, send two finite sublattices to their intersection or the smallest sublattice containing both. 
This yields the following question concerning the structure of the sheaf $\scrS^{\subF\sAtt}$:
\begin{center}
\emph{Can we view sections of $\scrS^{\subF\sAtt}$ as a lattice of sections of $\scrS^\sAtt$?}
\end{center}
This is not always possible, see Example \ref{ex:mobiusmonodromy}. 

To understand  this structure one needs to be able to relate the sheaves $\scrS^{\subF\sAtt}$ and $\scrS^{\sAtt}$. Define the following \etale space on $\sDS(\T,X)$:
 \[
 \mathrm{\Pi} := \bigl\{(\phi, \sA, A) \in \mathrm{\Pi}[\subF\sAtt]\bullet\mathrm{\Pi}[\sAtt] : A \in \sA\bigr\}.
 \]
The projection from $\mathrm{\Pi}[\subF\sAtt]\bullet\mathrm{\Pi}[\sAtt]$
remains a surjective local homeomorphism when restricted to the subspace $\mathrm{\Pi}$. 
There is a commutative diagram of restriction maps for \etale spaces:
\[
\begin{diagram}
\node{}\node{\mathrm{\Pi}[\subF\sAtt]\bullet \mathrm{\Pi}[\sAtt]}\arrow{sw,r}{}\arrow{se,r}{} \node{}\\
\node{\mathrm{\Pi}[\subF\sAtt]}\arrow{se,r}{\pi}  \node{\mathrm{\Pi}} \arrow{e,r}{}  \arrow{w,r}{} \arrow{n,r}{} \arrow{s,r}{\pi} \node{\mathrm{\Pi}[\sAtt]}\arrow{sw,r}{\pi}\\
\node{}\node{\sDS(\T,X)}\node{}
\end{diagram}
\]


Denote the sheaf of sections associated to the \etale space $\mathrm{\Pi}$ by
\[
\scrE^{\subF\sAtt}\colon \cO(\sDS(\T,X)) \to \sSetCat.
\]
A section of $\scrE^{\subF\sAtt}$ traces out the continuation of a finite sublattice of attractors, as well as a specific attractor in that sublattice. From the diagram, there are two morphisms:
\[
q\colon \scrE^{\subF\sAtt} \to \scrS^\sAtt\quad\quad
r\colon \scrE^{\subF\sAtt} \to \scrS^{\subF\sAtt},
\]
which restricts sections to their attractor and finite sublattice components respectively. For an open set $\mathrm{\Omega} \subset \sDS(\T,X)$ and a section $\nu \in \scrS^{\subF\sAtt}(\mathrm{\Omega})$, we can consider the set $r_{\mathrm{\Omega}}^{-1}(\nu)$ consisting of sections of $\scrE^{\subF\sAtt}$ which agree with $\nu$ on their finite sublattice component. This set has a bounded distributive lattice structure, defined on the attractor component. Suppose $\sigma, \sigma' \in r_\mathrm{\Omega}^{-1}(\nu)$:
\[
\sigma(\phi) = (\nu(\phi), A), \quad \sigma'(\phi) = (\nu(\phi), A'), \quad \quad (\sigma\wedge\sigma')(\phi) := (\nu(\phi), A \wedge A').
\]
The meet operation is defined similarly. We can then pass this lattice through $q$, and achieve a bounded distributive lattice of sections of $\scrS^\sAtt$:
\[
q_\mathrm{\Omega} \colon r_\mathrm{\Omega}^{-1}(\nu) \subset \scrE^{\subF\sAtt}(\mathrm{\Omega}) \to \scrS^\sAtt(\mathrm{\Omega})
\]

We make the following observations:
\begin{itemize}
    \item For any section $\sigma\in q_\mathrm{\Omega}\bigl(r_\mathrm{\Omega}^{-1}(\nu)\bigr)$, with $\sigma(\phi) = (\phi,A)$, we have that $A\in \sA$, where $\nu(\phi) = (\phi, \sA)$. In other words, the value of $\sigma$  at $\phi$ is contained in the value of $\nu$ at $\phi$.
     \item Composing  the stalk restriction map $\rho_\phi\colon \scrS^\sAtt(\mathrm{\Omega}) \to \sAtt(\phi)$  yields the composite lattice homomorphism 
    \[
    f_{\mathrm{\Omega},\phi}\colon q_\mathrm{\Omega}\bigl(r_\mathrm{\Omega}^{-1}(\nu)\bigr) \to \sA \subset \sAtt(\phi), 
    \]
    where $\nu(\phi) = (\phi, \sA)$.
    \item If $f_{\mathrm{\Omega},\phi}$ is surjective at every $\phi \in \mathrm{\Omega}$, we retrieve $\nu$ from these sections: 
    \[
    \nu(\phi) = \bigl(\phi, \{A_\sigma\}\bigr), \quad  \textnormal{where} \quad {\sigma \in q_\mathrm{\Omega}\bigl(r_\mathrm{\Omega}^{-1}(\nu)\bigr)}, \quad \sigma(\phi) = (\phi, A_{\sigma}).
    \]
\end{itemize}

\begin{proposition}\label{prop:localcomposition}
Let $\nu\in \scrS^{\subF\sAtt}(\mathrm{\Omega})$ for some open set $\mathrm{\Omega}\subset \sDS(\T,X)$ and let  $\phi \in \mathrm{\Omega}$. Then, there is an open neighborhood $\mathrm{\Omega}'$ of $\phi$ such that $f_{\mathrm{\Omega}', \phi'}$ defined by $\restr{\nu}{\mathrm{\Omega}'}$ is surjective for all $\phi'\in \mathrm{\Omega}'$.
\end{proposition}
\proof
By Lemma \ref{charsec2}  $\nu$ yields a neighborhood $\mathrm{\Omega}'$ of $\phi$ upon which $\restr{\nu}{\mathrm{\Omega}'} = \restr{\mathrm{\Theta}[\bm{\mathsf{\omega}}; \sN]}{\mathrm{\Omega}'}$ for some $\sN\in \subF\sANbhd(\phi)$. For each $U \in \sN$, we have a section $\restr{\mathrm{\Theta}[\omega; U]}{\mathrm{\Omega}'} \in \scrS^\sAtt(\mathrm{\Omega}')$. Indeed, we can define the following section in $\scrE^{\subF\sAtt}(\mathrm{\Omega}')$:
\[
\phi \mapsto \bigl(\phi, \bm{\mathsf{\omega}}_\phi(\sN), \omega_\phi(U)\bigr)
\]
which maps to $\restr{\mathrm{\Theta}[\omega; U]}{\mathrm{\Omega}'}$ under $q$, and therefore $\restr{\mathrm{\Theta}[\omega; U]}{\mathrm{\Omega}'} \in q_{\mathrm{\Omega}'}\bigl(r_{\mathrm{\Omega}'}^{-1}(\nu)\bigr)$. Let $\phi' \in \mathrm{\Omega}'$, and $A\in \sA$, where $\nu(\phi') = (\phi', \sA)$. Then $A=\omega_\psi(U)$ for some $U\in \sN$, since $\bm{\mathsf{\omega}}_\psi(\sN) = \sA$. Moreover, since $f_{\mathrm{\Omega}',\phi'}\big(\restr{\mathrm{\Theta}[\omega; U]}{\mathrm{\Omega}'}\big) = A$ for arbitrary choices of $\phi'$ and $A$, the proof is complete.
\eproof 

Proposition \ref{prop:localcomposition} justifies that \emph{locally} a section in $\scrS^{\subF\sAtt}$ may be interpreted as a finite 
distributive lattice of sections in $\scrS^\sAtt$. We will investigate when this interpretation extends globally at a later stage. \\

Dually, we can consider the continuation frame  $\bigl(\sMRepr,\sMTess,\bm{\mathsf{\Delta}} \bigr)$, which defines the $\sLat$-valued \emph{Morse representation sheaf} $\scrS^\sMRepr$, cf.\ Sect.\ \ref{Morserepcont}. Applying Proposition \ref{prop:shmor} to the natural transformation $\bmu\colon \subF\sAtt \to \sMRepr$ with stable extension $\btau \colon \subF\sANbhd \to \sMTess$ yields a sheaf isomorphism
\[
\bmu\colon \scrS^{\subF\sAtt} \to \scrS^\sMRepr.
\]
The lattice structure of $\scrS^\sMRepr$ allows common coarsings and refinements of Morse representations:
let $\sigma_\sM,\sigma_\sM'\in \scrS^\sMRepr(\mathrm{\Omega})$, then
\[
\sigma_\sM \vee\sigma_\sM' \in \scrS^\sMRepr(\mathrm{\Omega})\quad\hbox{and}\quad 
\sigma_\sM \wedge\sigma_\sM' \in \scrS^\sMRepr(\mathrm{\Omega}),
\]
the \emph{common coarsening} and \emph{common refinement} of Morse representations continuations respectively. 
The binary operations are defined in the sheaf $\scrS^{\subF\sAtt}$ via 
\[
\bmu^{-1}(\sigma_\sM)\wedge \bmu^{-1}(\sigma_\sM')\quad \hbox{and} \quad \bmu^{-1}(\sigma_\sM)\vee \bmu^{-1}(\sigma_\sM'),
\]
respectively.
We can dualize the earlier theory for $\scrS^{\subF\sAtt}$ to describe sections of $\scrS^\sMRepr$. For a section $\zeta \in \sMRepr(\mathrm{\Omega})$, there is a corresponding section $\nu:=\bmu(\zeta)\in \scrS^{\subF\sAtt}(\mathrm{\Omega})$. We again have $f_{\mathrm{\Omega},\phi}\colon q_\mathrm{\Omega}(r^{-1}_\mathrm{\Omega}(\nu)) \to \sA$ where $\nu(\phi) = (\phi, \sA)$. Suppose $q_\mathrm{\Omega}(r^{-1}_\mathrm{\Omega}(\nu))$ is finite. We can dualize to achieve:
\[
g_{\mathrm{\Omega},\phi}\colon \sM(\sA) \to \sP_{\mathrm{\Omega}}, 
\]
where $\sP_{\mathrm{\Omega}}$ denotes the poset of join-irreducible elements of $q_\mathrm{\Omega}(r^{-1}_\mathrm{\Omega}(\nu))$. The map $g_{\mathrm{\Omega}, \phi}$ composes the isomorphism between $\sJ(\sA)$, the join-irreducible elements of $\sA$, and $\sM(\sA)$ with the dual of $f_{\mathrm{\Omega},\phi}$. The Morse representation $\sM(\sA)$ is exactly the value of $\zeta$ at $\phi$, in other words, $\zeta(\phi) = (\phi, \sM(\sA))$. If the lattice morphism $f_{\mathrm{\Omega}, \phi}$ is surjective, then $g_{\mathrm{\Omega}, \phi}$ is an embedding and thus a \emph{Morse decomposition}, cf.\ \cite[Def.\ 7]{KMV-1c}.
Thus we get an analogous statement to Proposition \ref{prop:localcomposition}.


\begin{corollary}
Let $\zeta\in \scrS^{\sMRepr}(\mathrm{\Omega})$ for an open $\mathrm{\Omega}\subset \sDS(\T,X)$ such that $q_\mathrm{\Omega}(r^{-1}_\mathrm{\Omega}(\bmu_\mathrm{\Omega}(\zeta)))$ is finite, and $\phi \in \mathrm{\Omega}$. Then there is an open neighborhood $\mathrm{\Omega}'$ of $\phi$ such that $g_{\mathrm{\Omega}', \phi'}$ is a Morse decomposition for all  $\phi'\in \mathrm{\Omega'}$.
\end{corollary} \noindent


\section{Parameter spaces and pullbacks}
\label{sec:bifs}
In this section we discuss continuation frames for parametrized families of dynamical systems
and how the associated sheaves can be constructed.

\subsection{Parametrized dynamical systems}
\label{ssc:paramsys}

Let $\mathrm{\Lambda}$ be a topological space. 
In keeping with the spirit of the paper we keep the conditions mild but in practical situations $\mathrm\Lambda$ is a CW-space.
\begin{definition}
\label{paradyn1}
Let $X$ be a compact topological space.
A \emph{parametrized dynamical system over $\mathrm{\Lambda}$} on $X$ is a continuous map
$\bphi\colon \T\times X\times \mathrm{\Lambda} \to X$ such that 
$\phi^\lambda := \bphi(\cdot,\cdot,\lambda) \in \sDS(\T,X)$ for all $\lambda\in \mathrm{\Lambda}$.
\end{definition}

The category of dynamical systems $\sDS(\T,X)$ is a function space equipped with the compact-open topology. 
For a parametrized dynamical system $\bphi$ we define the \emph{transpose}  $\obphi\colon\mathrm{\Lambda} \to \sDS(\T,X)$ by
\[
\obphi(\lambda) = \phi^\lambda := \bphi(\cdot,\cdot,\lambda).
\]
The transpose $\obphi\colon\mathrm{\Lambda} \to \sDS(\T,X)$ is a continuous map without additional assumptions on the topological spaces $\mathrm{\Lambda}$ and $X$, cf.\ Appendix \ref{cot}.

 For  the continuation frame $(\sAtt,\sANbhd,\omega)$ on $\sDS(\T,X)$ a parametrized dynamical system yields a pullback \etale space on $\mathrm{\Lambda}$: 
 \[
 \iobphi\mathrm{\Pi}[\sAtt] := \bigl\{(\lambda,\phi,A)\in \mathrm{\Lambda} \times \mathrm{\Pi}[\sAtt] ~|~ \obphi(\lambda) = \pi(\phi,A)=\phi\bigr\},
 \]

 
 i.e. the follows diagram commutes
 \[
\begin{tikzcd}
 \iobphi\mathrm{\Pi}[\sAtt] \arrow{rrr}{(\lambda,\phi,A)\mapsto (\phi,A)} \arrow{d}{(\lambda,\phi,A) \mapsto \lambda} &&& \mathrm{\Pi}[\sAtt] \arrow{d}{\pi} \\ \mathrm{\Lambda} \arrow{rrr}{\obphi} &&& \sDS(\T,X)
\end{tikzcd}
 \]
 where $\iobphi\mathrm{\Pi}[\sAtt]$ is the pullback in the category of topological spaces, cf.\ \cite[Sect.\ I.3]{Bredon}. From \cite[Prop.\ 2.4.9]{Borceux} it follow that $\iobphi\mathrm{\Pi}[\sAtt] \to \mathrm{\Lambda}$ is an \etale space over $\mathrm{\Lambda}$.
The binary operations on $\mathrm{\Pi}[\sAtt]$ can be verified to be continuous on the inverse image \etale space.
As before we obtain the following $\sBDLat$-valued pullback sheaf
\[
\iobphi\scrS^\sAtt\colon \cO(\mathrm{\Lambda}) \to \sBDLat,
\]
as the sheaf of sections of $\iobphi\mathrm{\Pi}[\sAtt]$.
Applying the boolean ring functor $\sR$ to the sheaf of sections yields a ring valued sheaf:
\[
\scrA^{\obphi} 
:= \bigl(\sR\iobphi\scrS^{\sAtt}\bigr)^\# \colon \cO(\mathrm{\Lambda}) \to \sRing.
\]\label{pg:attsheaf}
The \emph{ringed space} $(\mathrm{\Lambda},\scrA^{\obphi})$ encodes the continuation data of attractors for the parametrized dynamical system. Similarly, for the Monoid ring functor $\Z_2\cdot $ we obtain:
\[
\gAtt^{\obphi} 
:= \bigl(\Z_2\iobphi\scrS^{\sAtt}\bigr)^\# \colon \cO(\mathrm{\Lambda}) \to \sRing.
\]
where the multiplication is inherited from the monoidal structure of $\sAtt$, cf. Section \ref{ssc:alg-att}. We are now in the setting of sheaf cohomology. Since the category of sheaves of abelian groups has enough injectives, the \emph{$i$th sheaf cohomology groups} may be defined as the right derived functors of the global section functor. A more direct and detailed construction can be found in \cite{Bredon}. We apply these, and their relative versions, to the sheaves $\scrA^\obphi$ and $\gAtt^\obphi$. Theorem \ref{thm:CIT} and the later sections will show the cohomology groups $H^i(\mathrm{\Lambda}, \scrA^\obphi)$ and $H^i(\mathrm{\Lambda}, \mathrm{\Lambda}'; \scrA^\obphi)$ are algebraic invariants which can detect bifurcations.
\begin{remark}
\label{diffdef2}
The sheaf $\scrA^{\obphi}$ can be alternatively defined as follows. The Boolean ring functor yields the \etale space $\mathrm{\Pi}[\sR\sAtt]$ and the associated sheaf of sections \[
\iobphi\scrS^{\sR\sAtt} \cong \iobphi\scrA \cong \scrA^\obphi,
\]
which defines the sheaf as the pullback sheaf with respect to $\obphi$.
\end{remark}

\subsection{Conjugate dynamical systems and homeomorphic \etale spaces}
\label{ssc:conjinvariance}
We start off with the basic notion of conjugacy in dynamical systems.
\begin{definition}
\label{paradyn2}
Let $X$ and $Y$ be compact topological spaces, and 
let $\obphi\colon \mathrm{\Lambda} \to \sDS(\T,X)$ and $\obpsi\colon \mathrm{\Lambda} \to \sDS(\T,Y)$ be parametrized dynamical systems.
A \emph{conjugacy} between $\obphi$ and $\obpsi$ is a continuous map $h\colon \mathrm{\Lambda}\times X \to Y$ and a continuous reparametrization $\tau\colon \mathrm{\Lambda} \times \T \times X \to \T$, such that 
\begin{enumerate}
    \item [(i)] $h^\lambda\times\tau^\lambda:= h(\lambda,\cdot)\times \tau(\lambda, \cdot, \cdot)$
is a conjugacy in $\Hom(\phi^\lambda,\psi^\lambda)$ for all $\lambda\in \mathrm{\Lambda}$;
\item [(ii)] $h^\lambda(X_i) = Y_i$ uniformly for all  $\lambda\in \mathrm{\Lambda}$, where $X_i$ and $Y_i$ are the connected components of $X$ and $Y$ respectively.
\end{enumerate}
If a conjugacy $h$ exists, then $\obphi$ and $\obpsi$ are said to be \emph{conjugate parametrized dynamical systems}.
\end{definition}

\begin{remark}
\label{paradyn3}
Assumption (ii) is always satisfied pointwise for $\lambda$ by
appropriately indexing the components of $X$ and $Y$. The uniformity in the above definition is not guaranteed since no restrictions on the topology of $\mathrm{\Lambda}$ are required. For specific topologies on $\mathrm{\Lambda}$ condition (ii) may be superfluous. 
\end{remark}

\begin{remark}
One may
also consider quasiconjugacies between parametrized dynamical systems over $\mathrm{\Lambda}$. 
\end{remark}

Since $h^\lambda$ is a conjugacy we know from Remark \ref{conjatt} that
the push-forward $U^\lambda \mapsto h^\lambda(U^\lambda)$ is an attracting neighborhood for $\psi_\lambda$ and similarly, the push-forward $A^\lambda \mapsto h^\lambda(A^\lambda)$
is an attractor for $\psi_\lambda$.
\begin{lemma}
\label{commutes1}
The following diagram  commutes:
\[
\begin{diagram}
\node{\sANbhd(\phi^\lambda)}\arrow{s,l,A}{\omega_{\phi^\lambda}}\node{} \node{\sANbhd(\psi^\lambda)}\arrow{s,r,A}{\omega_{\psi^\lambda}}\arrow[2]{w,l,<>}{\cong}\\
\node{\sAtt(\phi^\lambda)} \node{}\node{\sAtt(\psi^\lambda)}\arrow[2]{w,l,<>}{\cong}
\end{diagram}
\]
\end{lemma}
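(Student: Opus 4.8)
The plan is to recognize this square as the naturality diagram of the natural transformation $\omega\colon \sANbhd \Rightarrow \sAtt$ established in Section \ref{sec:AttFun}, instantiated at the conjugacy morphism. First I would fix the underlying morphism: by Definition \ref{paradyn2}(i), $h^\lambda$ is the spatial component of a conjugacy $\tau^\lambda\times h^\lambda\in \Hom(\phi^\lambda,\psi^\lambda)$ with $h^\lambda\colon X\to Y$ a homeomorphism. The two horizontal maps in the diagram are then precisely the contravariant functors applied to this morphism, namely
\[
\sANbhd(\tau^\lambda\times h^\lambda) = (h^\lambda)^{-1}\colon \sANbhd(\psi^\lambda)\to \sANbhd(\phi^\lambda),
\]
\[
\sAtt(\tau^\lambda\times h^\lambda) = \omega_{\phi^\lambda}\circ (h^\lambda)^{-1}\colon \sAtt(\psi^\lambda)\to \sAtt(\phi^\lambda).
\]
Since the functorial action depends only on $h^\lambda$ and not on the reparametrization $\tau^\lambda$, the reparametrization plays no role here.

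Next I would verify that these horizontal maps are isomorphisms. Because $h^\lambda$ is a homeomorphism, Remark \ref{conjatt} gives that the inverse morphism is again a conjugacy in $\Hom(\psi^\lambda,\phi^\lambda)$; applying $\sANbhd$ and $\sAtt$ to it produces two-sided inverses to the maps above. Equivalently, the push-forward $U\mapsto h^\lambda(U)$ mentioned just before the statement inverts the pull-back $(h^\lambda)^{-1}$ on attracting neighborhoods, and likewise on attractors. Hence both horizontal arrows are lattice isomorphisms, as the $\cong$ labels indicate.

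Finally, the commutativity of the square is exactly the assertion that $\omega$ is natural, which was proved in Section \ref{sec:AttFun}; concretely it reduces to equation \eqref{pullbackatt2} of Lemma \ref{lem:finv}. Tracing $U\in \sANbhd(\psi^\lambda)$, the down-then-left path yields $\omega_{\phi^\lambda}\bigl((h^\lambda)^{-1}(\omega_{\psi^\lambda}(U))\bigr)$, while the left-then-down path yields $\omega_{\phi^\lambda}\bigl((h^\lambda)^{-1}(U)\bigr)$, and these coincide by \eqref{pullbackatt2} with $f=h^\lambda$. The only mild subtlety worth checking carefully is that the $\cong$-arrows are identified with the functorial images rather than with the bare push-forward; once this identification is in place, the lemma is an immediate specialization of the established naturality, so I do not expect any genuine obstacle beyond this bookkeeping.
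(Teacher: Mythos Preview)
Your argument is correct and uses the same ingredients as the paper (Lemma~\ref{lem:finv}, specifically \eqref{pullbackatt2}, together with Remark~\ref{conjatt}), but you trace the square in the opposite direction. You read the horizontal $\cong$ as the functorial pull-backs $(h^\lambda)^{-1}$ and $\omega_{\phi^\lambda}\circ(h^\lambda)^{-1}$ and then invoke the naturality of $\omega$ directly; the paper instead works with the push-forwards $U^\lambda\mapsto h^\lambda(U^\lambda)$ and $A^\lambda\mapsto h^\lambda(A^\lambda)$ (as set up in the paragraph preceding the lemma) and verifies by hand that
\[
\omega_{\psi^\lambda}\bigl(h^\lambda(U^\lambda)\bigr)=\omega_{\psi^\lambda}\bigl(h^\lambda(\omega_{\phi^\lambda}(U^\lambda))\bigr)=h^\lambda(A^\lambda),
\]
by applying \eqref{pullbackatt2} to the conjugacy $(h^\lambda)^{-1}$ and using that $h^\lambda(A^\lambda)$ is already an attractor. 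The two routes are equivalent once one notes (via Remark~\ref{conjatt}) that on attractors the pull-back $(h^\lambda)^{-1}$ and push-forward $h^\lambda$ are mutually inverse lattice isomorphisms, so $\omega_{\phi^\lambda}\circ(h^\lambda)^{-1}=(h^\lambda)^{-1}$ on $\sAtt(\psi^\lambda)$. Your route is slightly more economical; the paper's push-forward formulation has the advantage that the displayed identity \eqref{equivconj1} is reused verbatim in the proof of Theorem~\ref{thm:CIT}.
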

\proof
Indeed, the maps from above we have $U^\lambda \mapsto h^\lambda(U^\lambda)\mapsto \omega_{\psi^\lambda}\bigl(h^\lambda(U^\lambda) \bigr)$. From below yields $U^\lambda \mapsto A^\lambda =\omega_{\phi^\lambda}(U^\lambda) \mapsto h^\lambda(A^\lambda)$.
Since $h$ is a conjugacy it follows from Remark \ref{conjatt} that Lemma \ref{lem:finv} applies to both $h^\lambda$ and $(h^\lambda)^{-1}$. This gives:
\begin{equation}
    \label{equivconj1}
\omega_{\psi^\lambda}\bigl(h^\lambda(U^\lambda) \bigr) =
\omega_{\psi^\lambda}\bigl(h^\lambda\bigl(\omega_{\phi^\lambda}(U^\lambda)\bigr) \bigr)
= \omega_{\psi^\lambda}\bigl(h^\lambda(A^\lambda) \bigr) = h^\lambda(A^\lambda),
\end{equation}
which proves commutativity.
\eproof

Lemma \ref{commutes1}
 holds for all $\lambda \in \mathrm{\Lambda}$ and which provides stalkwise isomorphisms between the associated sheaves of attractors. This however does not  give isomorphic sheaves necessarily!




\begin{theorem}[Conjugacy Invariance Theorem]
\label{thm:CIT}
Let $X$, $Y$ be 
compact metric spaces. Suppose $\obphi\colon \mathrm{\Lambda} \to \sDS(\T, X)$ and $\obpsi\colon \mathrm{\Lambda} \to \sDS(\T, Y)$ are conjugate parametrized dynamical systems.
Then, the \etale spaces $\iobphi\mathrm{\Pi}[\sAtt]$ and $\iobpsi\mathrm{\Pi}[\sAtt]$ are homeomorphic.
\end{theorem}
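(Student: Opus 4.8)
The plan is to exhibit an explicit candidate homeomorphism and then reduce everything to a single continuity statement, which is settled by a sandwiching argument powered by upper semicontinuity of attractors. Write the conjugacy as $h\colon\Lambda\times X\to Y$, with $h^\lambda$ a homeomorphism conjugating $\phi^\lambda$ to $\psi^\lambda$. A point of $\iobphi\Pi[\sAtt]$ is a triple $(\lambda,\phi^\lambda,A)$ with $A\in\sAtt(\phi^\lambda)$, so I would define
\[
H\colon \iobphi\Pi[\sAtt]\to \iobpsi\Pi[\sAtt],\qquad (\lambda,\phi^\lambda,A)\mapsto (\lambda,\psi^\lambda,h^\lambda(A)).
\]
By Remark \ref{conjatt} the push-forward $h^\lambda$ carries $\sAtt(\phi^\lambda)$ bijectively onto $\sAtt(\psi^\lambda)$, so $H$ is a well-defined fibrewise bijection over $\Lambda$ whose inverse is push-forward by $(h^\lambda)^{-1}$, itself a conjugacy. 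It therefore suffices to prove $H$ continuous; continuity of $H^{-1}$ follows verbatim with the roles of $X,Y$ and $h,h^{-1}$ interchanged, and this symmetry is exactly where the metric hypothesis on \emph{both} spaces is consumed.

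For continuity I would use that the local sections $\Theta[\omega;U]$ form a basis for the étalé topology, so it is enough to check that $H$ sends each basic section to a section. Fix $\lambda_0$ and the basic section $\lambda\mapsto(\lambda,\phi^\lambda,\omega_{\phi^\lambda}(U))$ on a neighbourhood of $\lambda_0$ on which $U\in\sANbhd(\phi^\lambda)$, which exists by stability (Lemma \ref{stalANbhd}). Using the conjugacy intertwining of $\omega$ from Lemma \ref{commutes1}, its image under $H$ is $\lambda\mapsto(\lambda,\psi^\lambda,\,\omega_{\psi^\lambda}(h^\lambda(U)))$. By Lemma \ref{charsec2} this is a section precisely when it coincides locally with $\Theta[\omega;V]$ for a single, $\lambda$-independent attracting neighbourhood $V\subset Y$; the obstruction is that the transported neighbourhood $h^\lambda(U)$ genuinely varies with $\lambda$. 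The natural choice is $V=h^{\lambda_0}(U)$, so the crux reduces to the equality
\[
\omega_{\psi^\lambda}(h^{\lambda_0}(U))=\omega_{\psi^\lambda}(h^\lambda(U))\qquad\text{for }\lambda\text{ near }\lambda_0.
\]

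To prove this equality I would first record two elementary properties of $\omega$: monotonicity ($W\subset W'$ implies $\omega_\psi(W)\subset\omega_\psi(W')$, from property (v)) and a \emph{sandwich principle} — if $V$ is an attracting neighbourhood with $B=\omega_\psi(V)$ and $B\subset\Int W\subset W\subset V$, then $\omega_\psi(W)=B$ (monotonicity gives $\subseteq$; idempotency of $\omega$ together with $B\subset W$ gives $\supseteq$). Then I would fix one larger attracting neighbourhood $V^+\subset Y$ of $\psi^{\lambda_0}$ with $\cl\,h^{\lambda_0}(U)\subset\Int V^+$ and $\omega_{\psi^{\lambda_0}}(V^+)=B^{\lambda_0}\subset\Int h^{\lambda_0}(U)$. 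The decisive analytic input is upper semicontinuity of the attractor of this fixed neighbourhood: since $\obpsi$ is continuous and $Y$ is compact metric, for every $\ep>0$ there is a neighbourhood of $\lambda_0$ on which $\omega_{\psi^\lambda}(V^+)$ lies in the $\ep$-neighbourhood of $B^{\lambda_0}$ (realize $V^+$ as an attracting block and control the finite-time image $\psi^\lambda_T(\cl V^+)$). Choosing $\ep$ so small that this $\ep$-neighbourhood sits inside $\Int h^{\lambda_0}(U)$, and invoking the uniform convergence $h^\lambda\to h^{\lambda_0}$ and $(h^\lambda)^{-1}\to(h^{\lambda_0})^{-1}$ (continuity of $h$ with $X$ compact) to secure the strict inclusions $h^\lambda(U)\subset V^+$ and $\omega_{\psi^\lambda}(V^+)\subset\Int h^\lambda(U)$ for $\lambda$ near $\lambda_0$, two applications of the sandwich principle (with $W=h^{\lambda_0}(U)$ and $W=h^\lambda(U)$) yield $\omega_{\psi^\lambda}(h^{\lambda_0}(U))=\omega_{\psi^\lambda}(V^+)=\omega_{\psi^\lambda}(h^\lambda(U))$. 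Hence the image section agrees with $\Theta[\omega;h^{\lambda_0}(U)]$ near $\lambda_0$, so $H$ is continuous, and by the symmetry noted above $H$ is a homeomorphism.

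I expect the upper-semicontinuity step to be the main obstacle: it is the only place where compactness and the metric are truly indispensable — they upgrade the qualitative ``no escape from an attracting block'' into the quantitative Hausdorff estimate that makes the sandwich applicable — and it is precisely the mechanism by which the failure flagged after Lemma \ref{commutes1}, that stalkwise isomorphisms need not globalize, is overcome. The remaining point-set inclusions among $h^\lambda(U)$, $h^{\lambda_0}(U)$, $V^+$, and $\omega_{\psi^\lambda}(V^+)$ are routine compactness/uniform-continuity arguments that I would verify but not belabor.
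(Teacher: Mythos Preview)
Your proof is correct and shares the paper's architecture: define the fibrewise push-forward, reduce continuity to the identity $\omega_{\psi^\lambda}(h^{\lambda_0}(U))=\omega_{\psi^\lambda}(h^\lambda(U))$ for $\lambda$ near $\lambda_0$, and settle this by a sandwich combined with Hausdorff continuity of $\lambda\mapsto h^\lambda$ (the paper's Lemma~\ref{lemma:haus}). The sandwiches themselves differ. You enclose $h^{\lambda_0}(U)$ in a \emph{larger} attracting neighbourhood $V^+$ and invoke upper semicontinuity of $\lambda\mapsto\omega_{\psi^\lambda}(V^+)$ directly, so that your elementary monotonicity/idempotency principle pins both sides to $\omega_{\psi^\lambda}(V^+)$. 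The paper instead fixes a \emph{smaller} compact $U'\subset\Int U$ and a disjoint repelling neighbourhood $V$, and---rather than proving any semicontinuity estimate---exploits the \etale topology it has already built: the three local sections $\Theta[\omega;U]$, $\Theta[\omega;U']$, $\Theta[\omega;V^c]$ agree at $\lambda_0$, hence on a neighbourhood $E_0$, giving $B^\lambda\subset U'$ and $(B^\lambda)^*\subset V$ there, after which the attractor--repeller criterion \cite[Lemma~3.21]{KMV-1a} forces $\omega_{\psi^\lambda}(\widetilde U)=B^\lambda$ for every $\widetilde U$ with $d_\sH(\widetilde U,U)$ small. Your route is the classical dynamics argument and avoids the duality lemma; the paper's recycles its own sheaf machinery and sidesteps the analytic upper-semicontinuity step you flag as the main obstacle. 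Two small points to tighten: take $U$ compact (harmless since $\omega_\phi(\cl U)=\omega_\phi(U)$) so that $h^{\lambda_0}(U)$ is actually disjoint from $(B^{\lambda_0})^*$ and your $V^+$ exists; and when $U$ is a union of connected components there is no strictly larger $V^+$ with the same attractor, so the ``strict inclusion'' step fails---the paper isolates this boundary case and dispatches it via Definition~\ref{paradyn2}(ii).
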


\proof
From Lemma \ref{commutes1} we have the following commutative diagram of maps:
\[
\begin{diagram}
\node{\iobphi\mathrm{\Pi}[\sAtt]}\arrow[2]{e,l}{h_*}\arrow{se,r}{\pi}\node{}\node{\iobpsi\mathrm{\Pi}[\sAtt]}\arrow{sw,r}{\pi}\\
\node{}\node{\mathrm{\Lambda}}\node{}
\end{diagram}
\]
where $h_*$ is defined by $(\lambda,\phi^\lambda,A^\lambda) \mapsto h_*(\lambda,\phi^\lambda,A^\lambda):= \bigl(\lambda,\psi^\lambda,h^\lambda(A^\lambda)\bigr)$.
It is sufficient to show continuity, since if $h_*$ is continuous, then $h_*$ is a local homeomorphism, in which case $h_*^{-1}$ is a also a local homeomorphism ($h_*$ is a bijection), cf.\ \cite[Prop.\ 2.4.8]{Borceux}. This proves that $\iobphi\mathrm{\Pi}[\sAtt]$ and $\iobpsi\mathrm{\Pi}[\sAtt]$ are homeomorphic. 

In order to prove continuity we argue as follows. Consider the following commutative diagram
\[
\begin{diagram}
\node{} \node{\iobphi\mathrm{\Pi}[\sAtt]}\arrow{s,l}{\pi}\\
\node{\iobphi\mathrm{\Phi}[\sANbhd;U]} \arrow{ne,l}{\iobphi\mathrm{\Theta}[\omega;U]}\arrow{e,l}{\subset}\node{\mathrm{\Lambda}}
\end{diagram}
\]
where $\iobphi\mathrm{\Phi}[\sANbhd;U] = \bigl\{\lambda~|~ U\in \sANbhd(\phi^\lambda) \bigr\}$ and
$\iobphi\mathrm{\Theta}[\omega;U](\lambda) = \bigl(\lambda,\phi^\lambda,\omega_{\phi^\lambda}(U) \bigr)$.
Let $D_0\subset \mathrm{\Lambda}$ be an open neighborhood of $\lambda_0\in \mathrm{\Lambda}$ and let 
\[
\iobpsi\mathrm{\Theta}\big[\omega; h^{\lambda_0}(U^{\lambda_0})\big](D_0) =
\biggl\{\Bigl(\lambda,\psi^\lambda,\omega_{\psi^\lambda}\bigl(h^{\lambda_0}(U^{\lambda_0})\bigr)\Bigr)~|~\lambda\in D_0 \biggr\}
\]
be an open neighborhood of $h_*\bigl(\lambda_0,\phi^\lambda,A^{\lambda_0}\bigr) = \bigl(\lambda_0,\psi^\lambda,h^{\lambda_0}(A^{\lambda_0}) \bigr)$ in $\iobpsi\mathrm{\Pi}[\sAtt]$ for some  compact $U^{\lambda_0} \in\sANbhd(\phi^{\lambda_0})$. 
In order to establish continuity we seek a neighborhood $D_0'\subset D_0\subset \mathrm{\Lambda}$
such that 
\[
\begin{aligned}
h_*\Bigl(\iobphi\mathrm{\Theta}[\omega; U^{\lambda_0}]&(D_0')\Bigr) =
\bigg\{\Big(\lambda,\psi^\lambda, h^\lambda\bigl(\omega_{\phi^\lambda}(U^{\lambda_0})\bigr)\Big)~|~ \lambda \in D_0'\bigg\}\\
&= \bigg\{\Big(\lambda,\psi^\lambda, \omega_{\psi^\lambda}\big(h^{\lambda}(U^{\lambda_0})\big)\Big)~|~ \lambda \in D_0'\bigg\} \subset \iobpsi\mathrm{\Theta}\big[\omega; h^{\lambda_0}(U^{\lambda_0})\big](D_0),
\end{aligned}
\]
where the second equality follows from  Lemma \ref{commutes1}, Eqn.\ \eqref{equivconj1}.
This is equivalent to saying 
\begin{equation*}
\omega_{\psi^\lambda}\big(h^{\lambda}(U^{\lambda_0})\bigr) =
\omega_{\psi^\lambda}\bigl(h^{\lambda_0}(U^{\lambda_0})\bigr),\quad \forall \lambda \in D_0'.
\end{equation*}
For notational convenience we write 
\[
U := h^{\lambda_0}(U^{\lambda_0})\in \sANbhd(\psi^{\lambda_0}),\quad \hbox{and}\quad A = h^{\lambda_0}(A^{\lambda_0})
=  \omega_{\psi^{\lambda_0}}(U') \in \sAtt(\psi^{\lambda_0}). 
\]
We rephrase the above condition as:
\begin{equation}
 \label{id12}
\omega_{\psi^\lambda}\big(h^{\lambda}(U^{\lambda_0})\bigr) = \omega_{\psi^\lambda}(U), \quad \forall \lambda \in D_0'.
\end{equation}

For $U^{\lambda_0} = \varnothing$, or for  $U^{\lambda_0} = \bigsqcup_i X_i\subset X$, any union 
of connected components of $X$, Eqn.\ \eqref{id12} is satisfied by the uniform conjugacy condition in Defn.\ \ref{paradyn2}(ii), cf.\ Remark \ref{paradyn3}.
For the remainder of the proof we assume $U^{\lambda_0} \not= \varnothing$ and
$U^{\lambda_0} \neq  \bigsqcup_i X_i$, for all unions
of connected components of $X$.
Therefore, we may carry out the arguments for the components $U^{\lambda_0}_i = U^{\lambda_0}\cap X_i \neq  \varnothing, X_i$.

Choose a compact attracting neighborhood $U'\in \sANbhd(\psi^{\lambda_0})$ such that $U'\subset \Int U$
and $\omega_{\psi^{\lambda_0}}(U') = A$. Indeed, since $A$ is an attractor $\cl U^c \cap A=\varnothing$, cf.\ \cite[Lemma 3.23]{KMV-1a}. Therefore there exists open sets $N,N'$ such
that $A\subset N$, $\cl U^c\subset N'$ and $N\cap N'=\varnothing$. As a matter of fact 
$\cl N\cap N' = \varnothing$. Define $U' = \cl N$. By construction $A^*\subset U^c\subset \cl U^c\subset N'$ and thus $U' \cap A^*=\varnothing$ which proves that (i) $\omega_{\psi^{\lambda_0}}(U') = A$, (ii) $A\subset N\subset U'$, (iii) $U' = \cl N \subset N'^c \subset \bigl(\cl U^c \bigr)^c = \Int U$,
and thus
$U'$ is an attracting neighborhood satisfying the properties stated above, cf.\ \cite[Lemma 3.21]{KMV-1a}. From the fact that $U \neq  \bigsqcup_i Y_i$, a union of components,
it follows that $\Int U \subsetneq U$. Thus by Property (iii) 
there exists a $\delta_1>0$ such that $B_{\delta_1}(U')\subset U$ and therefore $d_\sH(U,U')\ge \delta_1>0$, where $d_\sH$ is the Hausdorff metric on the space $\sH(X)$ of compact subsets of $X$.

By the same token we can choose a compact repelling neighborhood $V\in \sRNbhd(\psi^{\lambda_0})$
such that $V\cap U = \varnothing$ and  $\omega_{\psi^{\lambda_0}}(V^c) = A$.
 Indeed, repeat the above arguments starting with $U\cap A^*=\varnothing$. $V$ is compact, so there exists a $\delta_2>0$ such that $d_\sH(U,V)\ge \delta_2>0$.

Since,  $\iobpsi\mathrm{\Theta}\big[\omega; U\big]$, $\iobpsi\mathrm{\Theta}\big[\omega; U'\big]$
and $\iobpsi\mathrm{\Theta}\big[\omega; V^c\big]$ define local sections in $\iobpsi\mathrm{\Pi}[\sAtt]$ over $\iobpsi\mathrm{\Phi}[\sANbhd;U]$, $\iobpsi\mathrm{\Phi}[\sANbhd;U']$ and $\iobpsi\mathrm{\Phi}[\sANbhd;V^c]$ respectively, and since
\[
\iobpsi\mathrm{\Theta}\big[\omega; U\big](\lambda_0) = \iobpsi\mathrm{\Theta}\big[\omega; U'\big](\lambda_0) = \iobpsi\mathrm{\Theta}\big[\omega; V^c\big](\lambda_0)
\]
there exists an open set $E_0\subset \mathrm{\Lambda}$ on which three sections coincide, i.e.
\[
B^\lambda:=\omega_{\psi^{\lambda}}(U) = \omega_{\psi^{\lambda}}(U') = \omega_{\psi^{\lambda}}(V^c), \quad\forall \lambda \in E_0,
\]
and $B^\lambda \subset \Int U$, $B^\lambda \subset \Int U'$ and $B^\lambda \subset \Int V^c$ for all $\lambda \in E_0$.

Let $\widetilde U$ be any compact neighborhood such that $d_\sH(U,\widetilde U)<\delta = \min\{\delta_1,\delta_2\}/2$ and let $\lambda\in E_0$.
Then, 
\[
B^\lambda \subset U'\subset \widetilde U,\qquad \widetilde U\cap (B^\lambda)^* \subset \widetilde U\cap V=\varnothing,
\]
which by \cite[Lemma 3.21]{KMV-1a} implies that $\omega_{\psi^\lambda}(\widetilde U) = B^\lambda$ for all $\lambda \in E_0$.

Finally, using the continuity of $h^\lambda_\sH$ in Lemma \ref{lemma:haus}, choose an open sets $D_0'\subset E_0\cap D_0$  such that 
$d_\sH\bigl(h^\lambda(U^{\lambda_0}), U\bigr) < \delta$ for all $\lambda \in D_0'$.
By the previous we choose $\widetilde U = h^\lambda(U^{\lambda_0})$ which proves that 
\[
\omega_{\psi^\lambda}\bigl(h^{\lambda}(U^{\lambda_0})\bigr) = B^\lambda = \omega_{\psi^\lambda}(U),
\quad\forall \lambda \in D_0',
\]
establishing \eqref{id12} and thereby the theorem.
\eproof

\begin{remark}
The condition that the spaces $X$ and $Y$ are compact metric spaces is used at several places in the proof and in particular for using the Hausdorff metric. The characterizations of attracting and repelling neighborhoods via attractors and dual repellers at least works in compact Hausdorff spaces.
\end{remark}

Theorem \ref{thm:CIT} can be extended to other structures. Since $\iobphi\mathrm{\Pi}[\sAtt]$ is homeomorphic (as a sheaf of sets) to $\iobphi\mathrm{\Pi}[\sRep]$, we can get a homeomorphism between $\iobphi\mathrm{\Pi}[\sRep]$ and $\iobpsi\mathrm{\Pi}[\sRep]$. There is the following commutative diagram for Morse sets:

\[
\begin{tikzcd}
\iobphi\mathrm{\Pi}[\sAtt]\bullet\iobphi\mathrm{\Pi}[\sAtt]\arrow[rrr,"{  }"]\arrow[d, "\mathrm{\Pi}{[}\sC_\sAtt{]}"]&&&
\iobpsi\mathrm{\Pi}[\sAtt]\bullet\iobpsi\mathrm{\Pi}[\sAtt]\arrow[d, "\mathrm{\Pi}{[}\sC_\sAtt{]}"] \\ \iobphi\mathrm{\Pi}[\sMorse]\arrow[rrr, "{}"]\arrow[dr]&&&
\iobpsi\mathrm{\Pi}[\sMorse]\arrow[dll]
\\
& \mathrm{\mathrm{\Lambda}}
\end{tikzcd}
\]
where the top horizontal map is given by 
\[
(\lambda ,\phi^\lambda , A),(\lambda ,\phi^\lambda , A') \mapsto   (\lambda ,\psi^\lambda , h^\lambda (A)),(\lambda ,\psi^\lambda , h^\lambda (A'))
\]
and the bottom horizontal map is given by
\[
(\lambda , \phi^\lambda , M) \mapsto (\lambda , \psi^\lambda , h^\lambda (M)),
\]
which, using a similar argument to Propositon \ref{prop:shmor}, establishes that the \etale spaces $\iobphi\mathrm{\Pi}[\sMorse]$ and $\iobpsi\mathrm{\Pi}[\sMorse]$ are homeomorphic. 

\begin{corollary}
\label{CIT2}
Let $X$ and $Y$ be homeomorphic compact metric spaces and let $\sAtt_X$ and $\sAtt_Y$ be the attractor functors on $\sDS(\T,X)$ and $\sDS(\T,Y)$ respectively. Then, the \etale spaces $\mathrm{\Pi}[\sAtt_X]$ and 
$\mathrm{\Pi}[\sAtt_Y]$ are homeomorphic.
\end{corollary}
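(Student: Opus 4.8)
The plan is to deduce the corollary from Theorem~\ref{thm:CIT} by using the entire category $\sDS(\T,X)$ as a parameter space and feeding it a conjugacy that is \emph{constant} in the parameter. Fix a homeomorphism $g\colon X\to Y$. I would take $\Lambda:=\sDS(\T,X)$, let $\obphi:=\id_{\sDS(\T,X)}$ be the tautological parametrization, and define $\obpsi:=G$, where $G\colon\sDS(\T,X)\to\sDS(\T,Y)$ sends $\phi$ to its $g$-conjugate $G(\phi):=g\circ\phi\circ(\id_\T\times g^{-1})$, so that $G(\phi)_t=g\circ\phi_t\circ g^{-1}$. A short check shows $G(\phi)\in\sDS(\T,Y)$ (the identity and semigroup axioms are preserved under conjugation).

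First I would verify that $\obphi$ and $\obpsi$ really are parametrized dynamical systems in the sense of Definition~\ref{paradyn1}. For $\obphi$ this is continuity of the evaluation map $\T\times X\times\sDS(\T,X)\to X$, $(t,x,\phi)\mapsto\phi_t(x)$, which holds because $\T\times X$ is locally compact Hausdorff ($X$ being compact metric), cf.\ Appendix~\ref{cot}. For $\obpsi$ I would observe that $G$ is precomposition with the homeomorphism $\id_\T\times g^{-1}\colon\T\times Y\to\T\times X$ followed by postcomposition with the homeomorphism $g\colon X\to Y$; each operation is a homeomorphism of the relevant function spaces in the compact-open topology, so $G$ is itself a homeomorphism $\sDS(\T,X)\xrightarrow{\ \cong\ }\sDS(\T,Y)$, hence in particular continuous.

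Next I would exhibit the constant map $h\colon\Lambda\times X\to Y$, $h(\lambda,x):=g(x)$, as a conjugacy between $\obphi$ and $\obpsi$. Condition~(i) of Definition~\ref{paradyn2} holds because, by construction, $g$ intertwines $\phi^\lambda=\lambda$ with $\psi^\lambda=G(\lambda)$ (with trivial time reparametrization $\tau=\id$); condition~(ii) is immediate and automatically uniform, since $h^\lambda=g$ is independent of $\lambda$ and $g$ carries the connected components of $X$ onto those of $Y$. Thus $\obphi$ and $\obpsi$ are conjugate parametrized systems, and Theorem~\ref{thm:CIT} yields a homeomorphism of \etale spaces $\iobphi\Pi[\sAtt_X]\cong\iobpsi\Pi[\sAtt_Y]$.

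Finally I would identify the two pullbacks. Since $\obphi=\id$, the pullback $\iobphi\Pi[\sAtt_X]=\{(\lambda,(\phi,A)):\phi=\lambda\}$ is canonically homeomorphic to $\Pi[\sAtt_X]$; and since $\obpsi=G$ is a homeomorphism, the projection $(\lambda,(\psi,A))\mapsto(\psi,A)$ is a continuous bijection with continuous inverse $(\psi,A)\mapsto(G^{-1}(\psi),(\psi,A))$, so $\iobpsi\Pi[\sAtt_Y]$ is homeomorphic to $\Pi[\sAtt_Y]$ (cf.\ \cite[Prop.\ 2.4.9]{Borceux}). Composing the three homeomorphisms gives $\Pi[\sAtt_X]\cong\Pi[\sAtt_Y]$. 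The main point to watch is \emph{not} the conjugacy verification---that is trivial here precisely because the conjugacy is constant in $\lambda$, so the delicate Hausdorff-metric continuity estimate used to produce the neighborhood $D_0'$ in Theorem~\ref{thm:CIT} is never invoked---but rather the point-set bookkeeping that $G$ is genuinely a homeomorphism of compact-open function spaces and that evaluation is continuous, both of which rest on the local compactness of $\T\times X$ recorded in Appendix~\ref{cot}. (Alternatively, one could bypass Theorem~\ref{thm:CIT} entirely and define $H\colon\Pi[\sAtt_X]\to\Pi[\sAtt_Y]$ directly by $(\phi,A)\mapsto(G(\phi),g(A))$, checking via Remark~\ref{conjatt} that $H^{-1}\bigl(\Theta[\omega;V](\Omega)\bigr)=\Theta[\omega;g^{-1}(V)]\bigl(G^{-1}(\Omega)\bigr)$ is basic open; the universal-parametrization route is preferable since it reuses the continuity argument already established.)
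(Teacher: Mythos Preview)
Your proof is correct and follows essentially the same approach as the paper: take $\Lambda=\sDS(\T,X)$, $\obphi=\id$, $\obpsi$ the conjugation-by-$g$ map into $\sDS(\T,Y)$, and the constant conjugacy $h(\lambda,x)=g(x)$, then invoke Theorem~\ref{thm:CIT}. Your write-up is in fact more thorough than the paper's, which leaves implicit the continuity of evaluation, the fact that $G$ is a homeomorphism, and the identification of the pullback \etale spaces with $\Pi[\sAtt_X]$ and $\Pi[\sAtt_Y]$.
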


\proof
Let $h\colon X\to Y$ be a homeomorphism and let $\mathrm{\Lambda} = \sDS(\T,X)$. Then, $\obphi$ is the identity map. The map $\obpsi\colon \mathrm{\Lambda} \to \sDS(\T,Y)$ is defined as follows: $\mathrm{\Lambda}\ni\phi \mapsto h\circ \phi \circ h^{-1} = \psi$. Then,
\[
h\bigl(\phi_t(x)\bigr) = h\Bigl(\phi_t\bigl(h^{-1}(y)\bigr)\Bigr) = \psi_t(y) = 
\psi_t\bigl(h(x)\bigr),
\]
which proves that  $\obphi$ and $\obpsi$ are  conjugate parametrized dynamical systems.
\eproof

\section{Bifurcations and sheaf cohomology}
\label{ssc:examples}

Sheaves attach both local and global data to a topological space. In our setting of continuation, they encode how dynamical structures vary with parameter values on open sets. Oftentimes, given an open cover of the topological space, one can glue together the local information on each element of the cover to obtain global information. 
However, sometimes local information fails to extend globally. Sheaf cohomology, which can be viewed as a generalization of singular cohomology, is a powerful tool for studying this. An interpretation for singular cohomology groups is that they constitute obstructions to a topological space being contractible. Sheaf cohomology generalizes this by representing barriers for local sections to extend to global sections.
 
One can always solve an attractor's continuation \textit{locally} using an attracting neighborhood. But this problem is sometimes impossible \textit{globally}. 
Sheaf cohomology provides a framework for quantifying when and how this occurs. Together with the conjugacy invariance theorem, this will build an algebraic invariant for parametrized dynamical systems, which can be used to study bifurcations. 

Recall that a parametrized dynamical system on a topological space $\mathrm{\Lambda}$ is a continuous
map $\obphi\colon \mathrm{\Lambda}\to \sDS(\T,X)$ such that $\obphi(\lambda)\colon \T\times X \to X$ is a dynamical system for all $\lambda\in \mathrm{\Lambda}$. In principal $\mathrm{\Lambda}$ may be $\sDS(\T,X)$ but in practice simpler topological spaces for $\mathrm{\Lambda}$ are used. In this section, to utilize Theorem \ref{thm:CIT}, \emph{we assume $X$ is a compact metric space.}

\begin{definition}\label{def:stability}
A parametrized dynamical system $\obphi\colon\mathrm{\Lambda} \to \sDS(\T,X)$ is \emph{stable} at a point $\lambda_0 \in \mathrm{\Lambda}$ if there exists an open neighborhood $\mathrm{\Lambda}' \ni \lambda_0$ such that $\restr{\obphi}{\mathrm{\Lambda}'}$ is conjugate to the constant parametrization $\obtheta\colon \mathrm{\Lambda}'\to \sDS(\T,X)$, given by $\lambda \mapsto \obphi(\lambda_0)$ for all $\lambda \in \mathrm{\Lambda}'$. If $\lambda_0$ is not stable, it is called a \emph{bifurcation point}. 
A parametrized dynamical system $\obphi$ is \emph{stable} on a subset $\mathrm{\Lambda}'\subset \mathrm{\Lambda}$ if it is stable at every point in $\mathrm{\Lambda}'\subset \mathrm{\Lambda}$.

If a parametrized dynamical system $\obphi\colon\mathrm{\Lambda} \to \sDS(\T,X)$ is conjugate to the constant parametrization $\obtheta\colon \mathrm{\Lambda}\to \sDS(\T,X)$ on $\mathrm{\Lambda}$ it is called \emph{uniformly stable}.
\end{definition}

In general stability of a parametrized dynamical system does not imply uniform stability. For instance if $\mathrm{\Lambda}$ is not connected 
then $\obphi$ need not be conjugate to a fixed constant system $\obtheta$. 
This example indicates that stability does not imply uniform stability in general if $\mathrm{\Lambda}$
is disconnected. See Example \ref{counterconstant} for an counter example with a connected space $\mathrm{\Lambda}$.

\subsection{Locally constant sheaves}
\label{locconstsh}
Let $\obphi\colon \mathrm{\Lambda} \to \sDS(\T,X)$ be a parametrized dynamical system. 
From the previous we have the induced attractor sheaf and free attractor sheaf  over $\mathrm{\Lambda}$:
\[
\scrA^\obphi  \colon \cO(\mathrm{\Lambda}) \to \sRing,\qquad \gAtt^\obphi  \colon \cO(\mathrm{\Lambda}) \to \sRing.
\]
The ringed spaces $(\mathrm{\Lambda},\scrA^\obphi)$ and $(\mathrm{\Lambda},\gAtt^\obphi)$ encode the continuation data of attractors for the parametrized dynamical system. At a later stage we also include the attracting neighborhood sheaf and free attracting neighborhood sheaf $\scrN$ and $\gANbhd$ respectively.

Recall that for an abelian group
$\sE\in \sAb$ the presheaf $\scrE\colon \cO(\mathrm{\Lambda}) \to \sAb$  defined by $\scrE(\mathrm{\Lambda}') := \bigl\{\sigma\colon \mathrm{\Lambda}' \to \sE~~\text{constant} \bigr\}$, $\mathrm{\Lambda}'\subset \mathrm{\Lambda}$ open, is called the \emph{constant presheaf} over $\mathrm{\Lambda}$ with values in $\sE$. The sheafification $\underline{\sE} := \scrE^\#$ is called the \emph{constant sheaf} over $\Lambda$ with values in $\sE$. The constant sheaf can be characterized as the sheaf of locally constant functions  with values in $\sE$, i.e.
\[
\underline \sE(\mathrm{\Lambda}') = \bigl\{\sigma\colon \mathrm{\Lambda}' \to \sE~~\text{locally constant} \bigr\},\quad \mathrm{\Lambda}'\subset \mathrm{\Lambda},~~\text{open}.
\]
If we equip $\sE$ with the discrete topology then such functions are continuous functions $\sigma\colon \mathrm{\Lambda}'\to \sE$. This corresponds to the sheaf of section of the \etale space $\mathrm{\Lambda}\times\sE$, with $\sE$ equipped with the discrete topology, cf.\ \cite[Sect.\ 2.4]{Tenni}, \cite[Ex.\ 3.31 and 3.40]{Wedhorn}.
If $\mathrm{\Lambda}'\subset \mathrm{\Lambda}$ is open and connected then $\underline \sE(\mathrm{\Lambda}') \cong\sE$. For an open set whose connected components are open then $\underline \sE(\mathrm{\Lambda}')$ is isomorphic to a direct  product of copies of $\sE$, one for each connected component, cf.\ \cite[Ex.\ 1.0.3]{Hartshorne}.
An abelian sheaf $\scrF$ is called \emph{locally constant} is there exists an open covering $\{\mathrm{\Lambda}_i\}$
of $\mathrm{\Lambda}$ such that $\scrF|_{\mathrm{\Lambda}_i}$ is a constant sheaf for all $i$. This is equivalent to saying that every point allows a neighborhood $\mathrm{\Lambda}'\subset \mathrm{\Lambda}$ such that $\scrF|_{\mathrm{\Lambda}'}$ is constant, cf.\ \cite[Defn.\ I.1.9]{Bergman}.
Locally constant sheaves are sheaves of sections of covering spaces, \cite[Ex.\ 3.41]{Wedhorn}.

\begin{lemma}
\label{prop:stableconstant00}
Let  $\obtheta\colon \mathrm{\Lambda} \to \sDS(\T,X)$ be a constant parametrization. Then,  the sheaves $\scrA^\obtheta$
and $\gAtt^\obtheta$ are  constant sheaves. 
\end{lemma}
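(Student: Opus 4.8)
The plan is to avoid the sheafification functor entirely and instead work with the \etale-space descriptions supplied by Remarks \ref{diffdef1} and \ref{diffdef2}, which reduce the claim to a purely topological computation of a pullback along a \emph{constant} map. Write $\phi_0 := \obtheta(\lambda)$, a dynamical system independent of $\lambda\in\Lambda$. By Remark \ref{diffdef2} the sheaf $\scrA^\obtheta$ is the sheaf of sections $\iobtheta\scrS^{\sR\sAtt}$ of the pullback \etale space $\iobtheta\Pi[\sR\sAtt]\to\Lambda$, and by the analogous statement in Remark \ref{diffdef1} the sheaf $\gAtt^\obtheta$ is the sheaf of sections of $\iobtheta\Pi[\monZ\sAtt]\to\Lambda$. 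It therefore suffices to identify these two pullback \etale spaces.

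First I would compute the pullback. Let $\pi\colon\Pi[\sR\sAtt]\to\sDS(\T,X)$ be the \etale projection and set $F:=\pi^{-1}(\phi_0)$, which as a set is the stalk $\sR\sAtt(\phi_0)$, cf.\ Definition \ref{fundseq}. Since $\obtheta$ is constant equal to $\phi_0$,
\[
\iobtheta\Pi[\sR\sAtt] = \bigl\{(\lambda,e)\in\Lambda\times\Pi[\sR\sAtt]~|~\pi(e)=\phi_0\bigr\} = \Lambda\times F.
\]
The key observation is that $F$ carries the \emph{discrete} topology: because $\pi$ is a local homeomorphism (Theorem \ref{thm:shgen}), each $e\in F$ has an \etale neighborhood $W_e$ on which $\pi$ is injective, whence $W_e\cap F=\{e\}$. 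It follows that the subspace topology $\Lambda\times F$ inherits from $\Lambda\times\Pi[\sR\sAtt]$ agrees with the product topology on $\Lambda\times F$ with $F$ discrete, since the traces $O\times(W\cap F)$ of basic open sets are exactly the unions of the product-open sets $O\times\{e\}$.

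Next I would pass to sections. The sheaf of sections of the trivial bundle $\Lambda\times F\to\Lambda$ with $F$ discrete assigns to an open $\Omega$ the continuous maps $\lambda\mapsto(\lambda,g(\lambda))$, i.e.\ the locally constant functions $g\colon\Omega\to F$; this is precisely the constant sheaf with stalk $F$. Hence $\scrA^\obtheta\cong\underline{\sR\sAtt(\phi_0)}$, and the identical argument applied to $\Pi[\monZ\sAtt]$ yields $\gAtt^\obtheta\cong\underline{\monZ\sAtt(\phi_0)}$. Finally I would note that the ring structure is respected: the ring operations on $\scrA^\obtheta$ and $\gAtt^\obtheta$ are the fibrewise operations inherited from the ring-valued \etale spaces $\Pi[\sR\sAtt]$ and $\Pi[\monZ\sAtt]$, and under the identifications above these become the pointwise operations of the constant sheaf, so the isomorphisms are isomorphisms of sheaves of rings.

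The main obstacle is exactly the topological point in the second paragraph: that the fibre $F$ is discrete and that the subspace topology on $\Lambda\times F$ is the product topology. This is where the \etale (local homeomorphism) property is indispensable, and it is what makes a pullback along a \emph{constant} map collapse to a trivial product bundle with constant sheaf of sections. An alternative route would work directly with the sheafified presheaves $(\sR\,\iobtheta\scrS^\sAtt)^\#$ and $(\monZ\,\iobtheta\scrS^\sAtt)^\#$: one would first observe that $\iobtheta\scrS^\sAtt$ is the constant sheaf $\underline{\sAtt(\phi_0)}$, then compute stalks using that $\sR$ and $\monZ$, being left adjoints (minimal Boolean extension, respectively the monoid ring), commute with the filtered colimits defining stalks, so that every stalk equals $\sR\sAtt(\phi_0)$, respectively $\monZ\sAtt(\phi_0)$; comparing with the canonical map out of the constant sheaf then gives the result. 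I would favour the \etale-space argument as the main proof, since it settles the ring structure and the global (not merely stalkwise) constancy at once.
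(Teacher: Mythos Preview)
Your proposal is correct and follows essentially the same approach as the paper: both compute the pullback \etale space along the constant map $\obtheta$ and observe that it is $\Lambda\times F$ with $F$ discrete, whence the sheaf of sections is constant. The only cosmetic difference is that the paper pulls back $\Pi[\sAtt]$ first, concludes that $\iobtheta\scrS^\sAtt$ is constant, and then asserts without further detail that $\scrA^\obtheta$ and $\gAtt^\obtheta$ are constant as well; you instead invoke Remarks \ref{diffdef1} and \ref{diffdef2} to work directly with $\Pi[\sR\sAtt]$ and $\Pi[\monZ\sAtt]$, which makes the final step about the ring-valued sheaves explicit rather than left to the reader.
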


\proof
The pullback \etale space $\iobtheta\mathrm{\Pi}[\sAtt]$ is given by
\[
\iobtheta\mathrm{\Pi}[\sAtt] \cong \mathrm{\Lambda} \times \sA,
\]
where $\sA = \sAtt(\phi^{\lambda_0})$, for some $\lambda_0\in \mathrm{\Lambda}$, 
is given the discrete topology. Therefore the sheaf of sections 
$\iobtheta\scrS^{\sAtt}$ is a constant sheaf.
Consequently, $\scrA^\obtheta$ and $\gAtt^\obtheta$ are also constant sheaves.
\eproof

\begin{lemma}\label{prop:stableconstant}
Let  $\obphi\colon \mathrm{\Lambda} \to \sDS(\T,X)$ be stable. Then,  the sheaves $\scrA^\obphi$
and $\gAtt^\obphi$ are locally constant sheaves. 
\end{lemma}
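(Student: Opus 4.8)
The plan is to reduce the statement to the constant case treated in Lemma \ref{prop:stableconstant00}, using stability together with the Conjugacy Invariance Theorem. Fix an arbitrary point $\lambda_0 \in \Lambda$. Since $\obphi$ is stable, there is an open neighborhood $\Lambda' \ni \lambda_0$ such that $\restr{\obphi}{\Lambda'}$ is conjugate to the constant parametrization $\obtheta\colon \Lambda' \to \sDS(\T,X)$, $\lambda \mapsto \obphi(\lambda_0)$. To prove that $\scrA^\obphi$ and $\gAtt^\obphi$ are locally constant it suffices to exhibit, for each $\lambda_0$, such a neighborhood on which the restrictions of these sheaves are constant.

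First I would record the compatibility of the pullback construction with restriction to open subsets. Write $\obphi' := \restr{\obphi}{\Lambda'}$ for the restricted parametrization. Since pulling back $\Pi[\sAtt]$ along the composite $\Lambda' \hookrightarrow \Lambda \xrightarrow{\obphi} \sDS(\T,X)$ is the iterated pullback, one has $\restr{\bigl(\iobphi\Pi[\sAtt]\bigr)}{\Lambda'} = \obphi'^{-1}\Pi[\sAtt]$ as \'etale spaces over $\Lambda'$. Passing to sheaves of sections and applying the functors $\sR$, $\monZ$, and sheafification --- each of which commutes with restriction to an open subset (sheafification being computed stalkwise, and $\sR,\monZ$ acting objectwise on presheaves) --- yields $\restr{\scrA^\obphi}{\Lambda'} \cong \scrA^{\obphi'}$ and $\restr{\gAtt^\obphi}{\Lambda'} \cong \gAtt^{\obphi'}$.

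Next I would invoke Theorem \ref{thm:CIT}: since $\obphi'$ and $\obtheta$ are conjugate parametrized dynamical systems over $\Lambda'$, the \'etale spaces $\obphi'^{-1}\Pi[\sAtt]$ and $\iobtheta\Pi[\sAtt]$ are homeomorphic. Crucially, the homeomorphism $h_*$ produced in that proof is the fiberwise pushforward by the conjugacies $h^\lambda$, each of which is a lattice isomorphism on attractors (cf.\ Remark \ref{conjatt}) and is compatible with $\omega$ by Lemma \ref{commutes1}; hence $h_*$ is an isomorphism of $\sBDLat$-valued \'etale spaces. Through the categorical equivalence between \'etale spaces and sheaves of sections this induces an isomorphism of the $\sBDLat$-valued sheaves of sections, and applying the functorial Booleanization/monoid-ring constructions and sheafification preserves it, giving $\scrA^{\obphi'} \cong \scrA^\obtheta$ and $\gAtt^{\obphi'} \cong \gAtt^\obtheta$ as sheaves of rings over $\Lambda'$.

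Finally, Lemma \ref{prop:stableconstant00} asserts that $\scrA^\obtheta$ and $\gAtt^\obtheta$ are constant sheaves. Chaining the isomorphisms, $\restr{\scrA^\obphi}{\Lambda'}$ and $\restr{\gAtt^\obphi}{\Lambda'}$ are constant, and since every point of $\Lambda$ admits such a neighborhood, $\scrA^\obphi$ and $\gAtt^\obphi$ are locally constant. The main obstacle I anticipate is the bookkeeping in the third step: verifying that the homeomorphism of Theorem \ref{thm:CIT} respects the lattice (and, after applying the functors, ring) structure \emph{stalkwise}, so that the resulting sheaf isomorphism is genuinely one of ring-valued sheaves and not merely of sheaves of sets.
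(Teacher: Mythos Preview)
Your proposal is correct and follows essentially the same route as the paper: pick $\lambda_0$, use stability to find a neighborhood $\Lambda'$ on which $\obphi$ is conjugate to the constant parametrization $\obtheta$, apply the Conjugacy Invariance Theorem to identify $\restr{\scrA^\obphi}{\Lambda'}$ with $\scrA^\obtheta$, and invoke Lemma~\ref{prop:stableconstant00}. You are in fact more careful than the paper in spelling out the compatibility of pullback with restriction and in flagging the need for $h_*$ to respect the lattice (hence ring) structure, points the paper takes for granted.
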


\proof

Pick a point $\lambda_0\in \mathrm{\Lambda}$. 
Since
$\obphi$ is stable 
there exists a neighborhood $\mathrm{\Lambda}'\ni \lambda_0$ such that 
$\obphi|_\mathrm{\Lambda'}$ is conjugate to the constant parametrization. 
By the Conjugacy Invariance Theorem in \ref{thm:CIT} we have that 
$\scrA^\obphi|_{\mathrm{\Lambda}'} \cong \scrA^\obtheta|_{\mathrm{\Lambda}'}$ as sheaves. The latter is a constant sheaf over $\mathrm{\Lambda}'$ and therefore $\scrA^\obphi|_{\mathrm{\Lambda}'}$ is a constant sheaf over $\mathrm{\Lambda}'$ by definition. 
We conclude that $\scrA^\obphi$ is locally constant. The same applies to $\gAtt^\obphi$.
\eproof

\begin{remark}
If $\obphi$ is uniformly stable then $\obphi$ is conjugate to a constant parametrization $\obtheta $ on $\mathrm{\Lambda}$. The associated \etale spaces are homemorphic  by Theorem
\ref{thm:CIT} and thus the sheaves $\scrA^\obphi$
and $\gAtt^\obphi$ are   constant sheaves is this case.
\end{remark}

\begin{example}
\label{counterconstant}
Let $X$ be the 2-point compactification of the line and consider the following family of differential equations
\[
\dot x = \sin(x+\lambda),\quad x\in \R,~~\lambda \in \Sb^1=\R/2\pi\Z.
\]
The above system defines a 1-parameter family of flows $\obphi\colon \mathrm{\Lambda}\to \sDS(\R,X)$
of flows on $X$ over parameter space $\mathrm{\Lambda} = \Sb^1$. Via the conjugacy $x\mapsto x-\lambda$ we conclude that $\obphi$ is stable and thus the attractor sheaf $\scrA^\obphi$ is a locally constant sheaf as indicated by Lemma \ref{prop:stableconstant}. Since $\pm\infty$ are not attractors, the only global sections in $\scrA^\obphi$ are $\varnothing$ and $X$. The stalks of $\scrA^\obphi$ are infinite  complete, atomic Boolean algebras which proves that $\scrA^\obphi$ is not a constant sheaf.
\end{example}

The above example shows that even if $\mathrm{\Lambda}$ is connected, then a stable system need not be uniformly stable. Indeed, $\obphi$ in Example \ref{counterconstant} allows a conjugacy over $\mathrm{\Lambda} = \Sb^1$, then
the attractor sheaf $\scrA^\obphi$ is constant which contradicts above statement that $\scrA^\obphi$ is locally constant but not constant. 

\begin{example}\label{ex:mobiusmonodromy}
Define a vector field restricted to the compact subset $[-2,2]\times [-2,2]$ of $\R^2$: 
\[
F(x,y) = (-x(x+1)(x-1), -y).
\]
We can rotate the vector field with a parameter $\theta$:
\[
F_\theta(x,y) = R_{-\theta}F(R_{\theta}(x,y)),
\]
where $R_\theta$ denotes the rotation matrix of angle $\theta$. Since $F_\pi(x,y)=F_0(x,y)$, gluing at $0$ and $\pi$ (not 2$\pi$) gives us a parametrized dynamical system $\obphi\colon \Sb^1 \to \sDS(\R,[0,2]\times[0,2])$  by integrating the vector field.
The invariant set $[-1,1]\times \{0\}$ undergoes a half-twist over $\Sb^1$. There are only three global sections of $\iobphi\scrS^\sAtt$:
\[\theta \mapsto \emptyset, \quad \theta \mapsto R_\theta(\{(1,0),(-1,0)\}), \quad \theta \mapsto R_\theta([-1,1]\times \{0\}).\]
Alas, each stalk is a five element lattice and $\Sb^1$ is connected, so $\iobphi\scrS^\sAtt$ is not the constant sheaf. Additionally, the five element attractor lattice is a global section in $\iobphi\scrS^{\subF\sAtt}$, but cannot be represented as a collection of global sections of $\iobphi\scrS^\sAtt$.

\begin{figure}[]
\label{fig:flip}
    \centering
    \includegraphics[width=\textwidth]{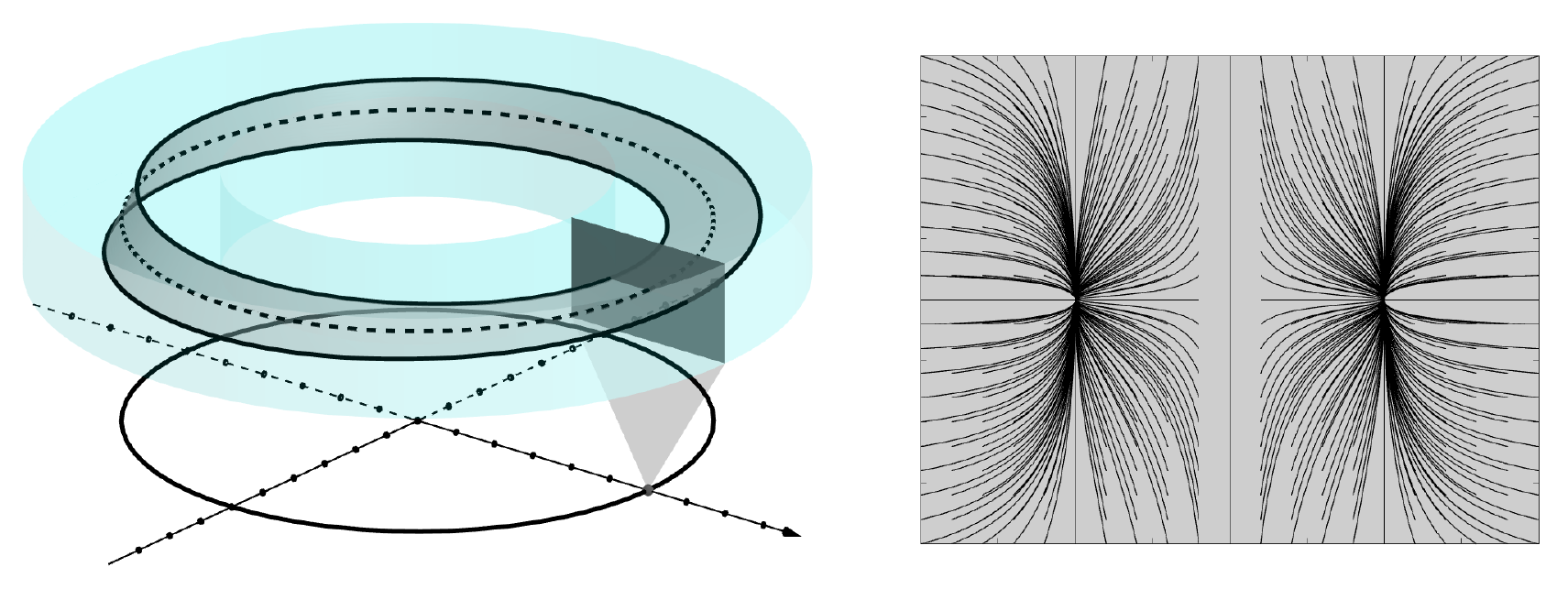}

\caption{ An illustration of example \ref{ex:mobiusmonodromy}. The vector field at the cross section is rotated over the parameter space $\Sb^1$. The system is stable, so the attractor sheaf is locally constant. However, it is impossible to continue either attracting fixed point globally.}
\end{figure}

\end{example}

As pointed out above, a locally constant sheaf is the sheaf of sections of a covering space. 
With additional conditions on $\mathrm{\Lambda}$
such sheaves may be constant sheaves.

\begin{proposition}[cf.\ \cite{Iversen}, Prop.\ 4.20 and \cite{Wedhorn}, Prop.\ 7.5]
\label{prop:stableconstant1a}
Let  $\mathrm{\Lambda}$ be a simply connected and locally path connected topological space,
and let $\scrF$ be a locally constant sheaf of rings on $\mathrm{\Lambda}$.
Then,  $\scrF$ is a  constant sheaf.
\end{proposition}

The same statement holds for contractible spaces $\mathrm{\Lambda}$, cf.\ \cite[Exer.\ II.4]{Schapira}.
We can apply the above proposition to the attractor sheaf $\scrA^\obphi$ and free attractor sheaf $\gAtt^\obphi$ for simple parametrized systems $\obphi$.

\begin{corollary}
\label{prop:stableconstant2}
Let $\obphi\colon \mathrm{\Lambda} \to \sDS(\T,X)$ be stable and 
let  $\mathrm{\Lambda}$ be a simply connected and locally path connected topological space. Then,  $\scrA^\obphi$
and $\gAtt^\obphi$ are constant sheaves.
\end{corollary}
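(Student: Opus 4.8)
The plan is to combine the two immediately preceding results, since this corollary is essentially their composition. The hypotheses of the corollary---stability of $\obphi$ together with $\Lambda$ being simply connected and locally path connected---are precisely those required to chain Lemma \ref{prop:stableconstant} and Proposition \ref{prop:stableconstant1a}, so no new construction or estimate is needed.

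First I would invoke Lemma \ref{prop:stableconstant}: because $\obphi\colon \Lambda \to \sDS(\T,X)$ is stable, the sheaves $\scrA^\obphi$ and $\gAtt^\obphi$ are locally constant. The engine behind that lemma is the Conjugacy Invariance Theorem \ref{thm:CIT}, which upgrades the stalk-wise isomorphisms of Lemma \ref{commutes1} to a genuine sheaf isomorphism over each conjugacy neighborhood; on such a neighborhood $\obphi$ restricts to something conjugate to a constant parametrization, whose associated sheaf is constant by Lemma \ref{prop:stableconstant00}. Thus every point of $\Lambda$ has a neighborhood on which $\scrA^\obphi$ and $\gAtt^\obphi$ are constant, which is exactly local constancy.

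Next I would check that $\scrA^\obphi$ and $\gAtt^\obphi$ are \emph{sheaves of rings}: they are obtained by applying the Boolean ring functor $\sR$ (respectively the monoid ring functor $\Z_2$) to $\iobphi\scrS^{\sAtt}$ and then sheafifying, so they land in $\sRing$ and the ring-valued hypothesis of Proposition \ref{prop:stableconstant1a} is satisfied. Applying that proposition to each of the locally constant sheaves of rings $\scrA^\obphi$ and $\gAtt^\obphi$ over the simply connected, locally path connected space $\Lambda$ forces both to be constant sheaves, establishing the claim. The only point requiring any care is the bookkeeping of hypotheses---confirming that the sheaves genuinely take values in $\sRing$ and that the topological conditions on $\Lambda$ match those of Proposition \ref{prop:stableconstant1a}---and there is no substantive obstacle, as the corollary follows directly from machinery already in place.
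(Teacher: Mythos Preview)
Your proposal is correct and follows exactly the route the paper intends: the corollary is stated immediately after Proposition \ref{prop:stableconstant1a} with the remark that one applies it to $\scrA^\obphi$ and $\gAtt^\obphi$, which are locally constant by Lemma \ref{prop:stableconstant}. Your additional bookkeeping (verifying the sheaves are ring-valued and tracing the machinery back to Theorem \ref{thm:CIT}) is sound and slightly more detailed than the paper, but the argument is the same.
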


For constant sheaves the sheaf cohomology can be related to singular cohomology which is a useful tool in our treatment of bifurcations.

\begin{proposition}[cf.\ \cite{Notzeb}, Thm.\ 9]
\label{prop:stableconstant3}
Let $\mathrm{\Lambda}$ be a locally contractible topological space (\cite[p.\ 57]{Spanier}),
and let $\sR$ be an arbitrary  ring. If  $\underline\sR$ denotes the constant sheaf with values in $\sR$, then $H^k(\mathrm{\Lambda};\underline\sR) \cong H^k_{\rm sing}(\mathrm{\Lambda};\sR)$ for all $k$.
\end{proposition}

If we combine Lemma \ref{prop:stableconstant}, Corollary \ref{prop:stableconstant2} and Proposition \ref{prop:stableconstant3}
we obtain a result that determines the sheaf cohomology of the attractor sheaves for simple parametrized dynamical systems.

\begin{corollary}
\label{prop:stableconstant4}
Let $\obphi\colon \mathrm{\Lambda} \to \sDS(\T,X)$ be stable and 
let $\mathrm{\Lambda}$ be a locally contractible and simply connected topological space. Then, 
\[
H^k(\mathrm{\Lambda};\scrA^\obphi) \cong H^k_{\rm sing}(\mathrm{\Lambda};\scrA^\obphi_{\lambda_0}),\quad \forall k,
\]
where $\scrA^\obphi_{\lambda_0}\in \sRing$ is a stalk at any $\lambda_0\in \mathrm{\Lambda}$.
A similar statement holds for $H^k(\mathrm{\Lambda};\gAtt^\obphi)$.
\end{corollary}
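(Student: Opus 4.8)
The plan is to assemble the conclusion directly from the three preceding results: stability lets me replace $\scrA^\obphi$ by a \emph{constant} sheaf, and local contractibility then lets me compare its sheaf cohomology with ordinary singular cohomology. Throughout I abbreviate the stalk by $\sR := \scrA^\obphi_{\lambda_0}$, which is a ring (it is the attractor ring $\sR\sAtt(\phi^{\lambda_0})$ occurring as a stalk of the sheaf of rings $\scrA^\obphi$, cf.\ Definition \ref{fundseq}). Since $\Lambda$ is simply connected it is in particular connected, so once constancy is established this stalk is independent of the chosen base point $\lambda_0$.

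First I would invoke Lemma \ref{prop:stableconstant}: because $\obphi$ is stable, both $\scrA^\obphi$ and $\gAtt^\obphi$ are locally constant sheaves of rings over $\Lambda$. Next I would observe that a locally contractible space is locally path connected, so the hypotheses of Corollary \ref{prop:stableconstant2} (equivalently Proposition \ref{prop:stableconstant1a}) are satisfied: $\Lambda$ is simply connected and locally path connected, and the sheaf is locally constant with values in $\sRing$. Corollary \ref{prop:stableconstant2} then upgrades local constancy to genuine constancy, yielding an isomorphism of sheaves $\scrA^\obphi \cong \underline{\sR}$, where $\underline{\sR}$ is the constant sheaf on the stalk $\sR = \scrA^\obphi_{\lambda_0}$. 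The identical argument applies verbatim to $\gAtt^\obphi$.

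Finally I would apply Proposition \ref{prop:stableconstant3} to the constant sheaf $\underline{\sR}$: since $\Lambda$ is locally contractible and $\sR$ is an arbitrary ring, $H^k(\Lambda;\underline{\sR}) \cong H^k_{\rm sing}(\Lambda;\sR)$ for all $k$. Chaining the two isomorphisms, and using that isomorphic sheaves have isomorphic cohomology,
\[
H^k(\Lambda;\scrA^\obphi) \;\cong\; H^k(\Lambda;\underline{\sR}) \;\cong\; H^k_{\rm sing}(\Lambda;\sR) \;=\; H^k_{\rm sing}\bigl(\Lambda;\scrA^\obphi_{\lambda_0}\bigr),\quad\forall k.
\]
Replacing $\scrA^\obphi$ by $\gAtt^\obphi$ (and $\sR$ by the corresponding free attractor ring $\gAtt^\obphi_{\lambda_0}$) throughout yields the concluding sentence of the statement.

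The only genuine point requiring care --- and the main obstacle --- is the matching of hypotheses across the three cited results. I must confirm that the footnote notion of local contractibility used in Proposition \ref{prop:stableconstant3} does entail the local path connectedness required by Corollary \ref{prop:stableconstant2}, and that simple connectivity is understood in the connected (nonempty) sense, so that the constant sheaf is well defined with a single stalk $\sR$. Once these definitional compatibilities are in place, the remainder is a purely formal concatenation of isomorphisms, with no further dynamical or sheaf-theoretic input needed.
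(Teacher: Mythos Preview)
Your proof is correct and follows essentially the same route as the paper: invoke Lemma~\ref{prop:stableconstant} for local constancy, use that local contractibility implies local path connectedness so that Corollary~\ref{prop:stableconstant2} upgrades to a constant sheaf, and then apply Proposition~\ref{prop:stableconstant3}. The paper's proof is terser but makes exactly the same chain of implications, including the remark that local contractibility supplies the local path connectedness needed for the middle step.
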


\proof
Lemma \ref{prop:stableconstant} implies that $\scrA^\obphi$ is a constant sheaf.
A locally contractible space is locally simply connected and locally path connected,
but not necessarily simply connected.
In combination with the condition of simple connectedness we can combine  Corollary \ref{prop:stableconstant2} and Proposition \ref{prop:stableconstant3}, which completes the proof.
\eproof

\subsection{Sufficient conditions}
\label{sufficient}

The statements about sheaf cohomology in Section \ref{locconstsh} imply the following sufficient condition for bifurcations to exist. The theorems stated for the attractor sheaf $\scrA^\obphi$ can also be stated for the free attractor sheaf $\gAtt^\obphi$.

\begin{theorem}
\label{thm:nonzerobifur}
Let $\mathrm{\Lambda}$ be both contractible and locally contractible. 
Suppose that
\[
H^k(\mathrm{\Lambda}; \scrA^\obphi)\neq0,
\quad\text{ for some }\quad k>0.
\]
Then, there  exist a bifurcation point in $\lambda_0 \in \mathrm{\Lambda}$.
\end{theorem}

\proof
Suppose there are no bifurcation points. This implies that $\obphi$ is stable 
which by Corollary \ref{prop:stableconstant4} implies that 
$H^k(\mathrm{\Lambda};\scrA^\obphi) \cong H^k_{\rm sing}(\mathrm{\Lambda};\sR)$ for all $k$ (where $\sR$ is isomorphic to a stalk of $\scrA^\obphi$).
Since $\mathrm{\Lambda}$ is contractible, we have that 
$H^k_{\rm sing}(\mathrm{\Lambda};\sR) =0$ for all $k>0$.
Combining these statements yields that $H^k(\mathrm{\Lambda};\scrA^\obphi) \cong H^k_{\rm sing}(\mathrm{\Lambda};\sR) =0$ for all $k>0$, which contradicts the above assumptions.
\eproof


As we will see in Section \ref{sec:exonepar} the above criterion does not always detect bifurcations.
In order to get a more in depth look into local bifucations we  consider its relative sheaf cohomology for $\scrA^\obphi$. 
We use the following lemma about long exact sequences in sheaf cohomology.

\begin{lemma}
\label{relshco1}
Let $\scrF$ be a sheaf of rings on $\mathrm{\Lambda}$ and let
 $\mathrm{\Lambda}'\xhookrightarrow{i} \mathrm{\Lambda}$. 
 Assume that the induced homomorhisms
$i_*^k\colon H^k(\mathrm{\Lambda};\scrF) \to H^k(\mathrm{\Lambda}';\scrF)$ are isomorphisms for all $k\ge 0$.
Then, 
\[
H^k(\mathrm{\Lambda},\mathrm{\Lambda}';\scrF) \cong 0,\quad \forall k\ge 0.
\]
\end{lemma}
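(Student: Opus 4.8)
The plan is to invoke the long exact sequence of the pair $(\Lambda,\Lambda')$ in sheaf cohomology and use the hypothesis that the restriction maps $i_*^k$ are isomorphisms to force every relative group to vanish. The relative sheaf cohomology $H^k(\Lambda,\Lambda';\scrF)$ fits, by its very construction (cf.\ Appendix \ref{sec:SheafCohomology}), into a long exact sequence
\[
\cdots \to H^k(\Lambda,\Lambda';\scrF) \to H^k(\Lambda;\scrF) \xrightarrow{i_*^k} H^k(\Lambda';\scrF) \xrightarrow{\delta} H^{k+1}(\Lambda,\Lambda';\scrF) \to \cdots,
\]
where the middle map is precisely the restriction homomorphism $i_*^k$ induced by $i\colon \Lambda' \hookrightarrow \Lambda$. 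This is the standard long exact sequence of a pair, valid for sheaf cohomology just as for singular cohomology.

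The key step is then purely algebraic. By hypothesis $i_*^k$ is an isomorphism for every $k\ge 0$. First I would observe that, because $i_*^k$ is surjective, the connecting homomorphism $\delta\colon H^k(\Lambda';\scrF) \to H^{k+1}(\Lambda,\Lambda';\scrF)$ is the zero map for all $k\ge 0$ (its domain-side image vanishes by exactness at $H^k(\Lambda';\scrF)$: the kernel of $\delta$ equals the image of $i_*^k$, which is all of $H^k(\Lambda';\scrF)$). Consequently the map $H^k(\Lambda,\Lambda';\scrF) \to H^k(\Lambda;\scrF)$ is injective for all $k$, since its kernel is the image of $\delta\colon H^{k-1}(\Lambda';\scrF) \to H^k(\Lambda,\Lambda';\scrF)$, which is zero by the same argument applied one degree down. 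Second, because $i_*^k$ is also injective, exactness at $H^k(\Lambda;\scrF)$ shows that the image of $H^k(\Lambda,\Lambda';\scrF) \to H^k(\Lambda;\scrF)$ equals the kernel of $i_*^k$, which is $0$. Combining injectivity with trivial image yields $H^k(\Lambda,\Lambda';\scrF)\cong 0$ for all $k\ge 0$.

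I do not expect a genuine obstacle here; the content is entirely formal once the long exact sequence of the pair is in hand. The only point requiring care is to make sure the low-degree end of the sequence is handled correctly, in particular that the argument for injectivity of $H^k(\Lambda,\Lambda';\scrF)\to H^k(\Lambda;\scrF)$ does not silently assume a term $H^{-1}$; for $k=0$ one reads the sequence as beginning with $0 \to H^0(\Lambda,\Lambda';\scrF) \to H^0(\Lambda;\scrF)$, so injectivity in degree $0$ is immediate and the inductive step above is unnecessary there. The cleanest presentation is simply to note that the hypothesis makes $i_*$ an isomorphism of long exact sequences' middle terms, whence the five lemma (or a direct two-line exactness chase) collapses the relative terms to zero. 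The main thing I would double-check is that the relative cohomology defined in the appendix is exactly the one fitting into this long exact sequence with $i_*^k$ as the restriction map, so that the hypothesis is being applied to the correct arrow.
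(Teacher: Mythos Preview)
Your proof is correct and follows essentially the same approach as the paper: both invoke the long exact sequence of the pair $(\Lambda,\Lambda')$ and carry out a direct exactness chase using the injectivity and surjectivity of $i_*^k$ to force each relative group to vanish. Your handling of the degree-zero endpoint is slightly more explicit, but otherwise the arguments coincide.
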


\proof
For triple $(\mathrm{\Lambda}',\varnothing) \xhookrightarrow{i}(\mathrm{\Lambda},\varnothing) \xhookrightarrow{j} (\mathrm{\Lambda},\mathrm{\Lambda}')$ we have the long exact sequence,
\[
\begin{tikzcd}[column sep=large]
 0  \arrow{r}{\delta^0}  & 
 H^0(\mathrm{\Lambda},\mathrm{\Lambda}';\scrF) \arrow{r}{j_*^0}  &
 H^0(\mathrm{\Lambda};\scrF)  \arrow{r}{i_*^0} \ar[draw=none]{d}[name=X, anchor=center]{} &
 H^0(\mathrm{\Lambda}';\scrF) \ar[rounded corners,
            to path={ -- ([xshift=2ex]\tikztostart.east)
                      |- (X.center) \tikztonodes
                      -| ([xshift=-2ex]\tikztotarget.west)
                      -- (\tikztotarget)}]{dll}[at end]{\delta^1} \\ 
& H^1(\mathrm{\Lambda},\mathrm{\Lambda}';\scrF) \arrow{r}{j_*^1} & 
 H^1(\mathrm{\Lambda};\scrF) \arrow{r}{i_*^1} & 
 H^1(\mathrm{\Lambda}';\scrF) \arrow{r}{\delta^2} &
 \cdots.
\end{tikzcd}
\]
For the exactness of the maps and the isomorphisms $i_*^k$ we have:
$\ker j_*^0 = \im \delta^0=0$, which proves that $j_*^0$ is injective. Furthermore,
since $i_*^0$ is an isomorphism we have
$\ker i_*^0 = 0 = \im j_*^0$ and thus $H^0(\mathrm{\Lambda},\mathrm{\Lambda}';\scrF) \cong 0$.
The remaining relative homology groups are determined as follows: $\ker \delta^1 = \im i_*^0 = H^0(\mathrm{\Lambda}';\scrF)\cong H^0(\mathrm{\Lambda};\scrF)$. Therefore, $\ker j_*^1 = \im \delta^1 = 0$, which shows that $j_*^1$ is injective. Furthermore,
$\ker i_*^1 =0=\im j_*^1$, consequently $H^1(\mathrm{\Lambda},\mathrm{\Lambda}';\scrF) \cong 0$.
The same argument can be repeated now for all other $k$.
\eproof

As an immediate consequence of the long exact sequence we have the following corollary
if we apply Lemma \ref{relshco1} to the attractor sheaf $\scrA^\obphi$.
\begin{corollary}
\label{relshco2}
Suppose $H^k(\mathrm{\Lambda},\mathrm{\Lambda}';\scrA^\obphi) \neq 0$ for some $k$. Then, there exist $k_0\ge 0$ for which
the inclusion $i$ does not imply an isomorphism $i_*^{k_0}\colon H^{k_0}(\mathrm{\Lambda};\scrA^\obphi) \to H^{k_0}(\mathrm{\Lambda}';\scrA^\obphi)$.
\end{corollary}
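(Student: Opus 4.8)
The plan is to recognize this corollary as the logical contrapositive of Lemma \ref{relshco1}, applied to the specific sheaf of rings $\scrF = \scrA^\obphi$. No new machinery is required: the entire content is already carried by the long exact sequence of the triple that was established in the proof of Lemma \ref{relshco1}, and the corollary merely reformulates that lemma.

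First I would assume, for the sake of contradiction, the negation of the desired conclusion, namely that the induced maps $i_*^k\colon H^k(\Lambda;\scrA^\obphi) \to H^k(\Lambda';\scrA^\obphi)$ are isomorphisms for \emph{every} $k \ge 0$. Under this assumption the hypotheses of Lemma \ref{relshco1} are satisfied with the sheaf taken to be $\scrA^\obphi$, so the lemma applies verbatim and yields $H^k(\Lambda,\Lambda';\scrA^\obphi) \cong 0$ for all $k \ge 0$. This directly contradicts the standing hypothesis that $H^k(\Lambda,\Lambda';\scrA^\obphi) \neq 0$ for some $k$.

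Since the assumption is untenable, there must exist at least one index $k_0 \ge 0$ for which $i_*^{k_0}$ fails to be an isomorphism, which is exactly the assertion of the corollary. I expect no genuine obstacle in this argument: the only subtlety worth stating explicitly is that $\scrA^\obphi$ is indeed a sheaf of rings on $\Lambda$, so that Lemma \ref{relshco1} is applicable, and this is immediate from the construction of $\scrA^\obphi$ in Section \ref{sec:bifs}.
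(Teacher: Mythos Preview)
Your proposal is correct and matches the paper's approach exactly: the paper states this corollary without proof, introducing it as ``an immediate consequence of the long exact sequence'' obtained by applying Lemma \ref{relshco1} to the attractor sheaf $\scrA^\obphi$, which is precisely the contrapositive argument you give.
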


The relative sheaf cohomology can be used to formulate an analogous criterion as Theorem \ref{thm:nonzerobifur}.

\begin{theorem}
\label{relshco3}
Let $\mathrm{\Lambda}$ be both contractible and locally contractible, and let
 $\mathrm{\Lambda}'\subset \mathrm{\Lambda}$ be a deformation retract of $\mathrm{\Lambda}$ 
with  $\obphi$  stable on $\mathrm{\Lambda}'$.
Suppose that 
\[
H^k(\mathrm{\Lambda}, \mathrm{\Lambda}';\scrA^\obphi)\neq 0,
\quad\text{ for some }\quad k\ge 0.
\]
Then, there  exist a bifurcation point in $\lambda_0 \in \mathrm{\Lambda}\smin \mathrm{\Lambda}'$.
\end{theorem}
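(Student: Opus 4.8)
The plan is to argue by contradiction and reduce the claim to Lemma~\ref{relshco1} via the homotopy invariance of cohomology with constant coefficients. Suppose no bifurcation point exists in $\Lambda\smin\Lambda'$. By hypothesis $\obphi$ is stable on $\Lambda'$, so every point of $\Lambda'$ is a stable point; combined with the assumption that no point of $\Lambda\smin\Lambda'$ is a bifurcation point, every point of $\Lambda$ is a stable point, i.e.\ $\obphi$ is stable on all of $\Lambda$. By Lemma~\ref{prop:stableconstant} the sheaf $\scrA^\obphi$ is then locally constant on $\Lambda$. Since $\Lambda$ is a contractible manifold, it is simply connected and locally path connected, so Corollary~\ref{prop:stableconstant2} upgrades $\scrA^\obphi$ to a \emph{constant} sheaf $\underline{\sR}$ on $\Lambda$, where $\sR=\scrA^\obphi_{\lambda_0}$ is its stalk.

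Next I would pass to the inclusion $i\colon\Lambda'\hookrightarrow\Lambda$. The restriction of a constant sheaf is constant, so $\restr{\scrA^\obphi}{\Lambda'}=\underline{\sR}$ is the constant sheaf on $\Lambda'$ with the same stalk. Both $\Lambda$ and $\Lambda'$ are manifolds, hence locally contractible, so Proposition~\ref{prop:stableconstant3} identifies the sheaf cohomology with singular cohomology: $H^k(\Lambda;\scrA^\obphi)\cong H^k_{\rm sing}(\Lambda;\sR)$ and $H^k(\Lambda';\scrA^\obphi)\cong H^k_{\rm sing}(\Lambda';\sR)$ for all $k$. Because this identification is natural with respect to the inclusion $i$, the induced homomorphism $i_*^k$ on sheaf cohomology corresponds, under these isomorphisms, to the pullback $i^*$ on singular cohomology.

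Now I would invoke the hypothesis that $\Lambda'$ is a deformation retract of $\Lambda$: the inclusion $i$ is then a homotopy equivalence, and homotopy invariance of singular cohomology gives that $i^*\colon H^k_{\rm sing}(\Lambda;\sR)\to H^k_{\rm sing}(\Lambda';\sR)$ is an isomorphism for every $k$. By the naturality noted above, $i_*^k\colon H^k(\Lambda;\scrA^\obphi)\to H^k(\Lambda';\scrA^\obphi)$ is an isomorphism for all $k\ge0$. Lemma~\ref{relshco1}, applied to $\scrF=\scrA^\obphi$, then yields $H^k(\Lambda,\Lambda';\scrA^\obphi)\cong 0$ for all $k\ge 0$, contradicting the hypothesis that $H^k(\Lambda,\Lambda';\scrA^\obphi)\neq 0$ for some $k$. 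Hence a bifurcation point must exist in $\Lambda\smin\Lambda'$.

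The main obstacle I anticipate is the naturality claim in the second paragraph: one must know that the comparison of Proposition~\ref{prop:stableconstant3} between constant-coefficient sheaf cohomology and singular cohomology commutes with the maps induced by $i$. This holds for the standard constructions of that comparison, but it is the one point requiring care, since Lemma~\ref{relshco1} demands precisely that the sheaf-theoretic maps $i_*^k$ be isomorphisms, not merely that the two cohomology groups be abstractly isomorphic. A secondary point to verify is that $\restr{\scrA^\obphi}{\Lambda'}$ is genuinely the constant sheaf on $\Lambda'$ (rather than only locally constant), which here is automatic once $\scrA^\obphi$ is known to be constant on the ambient $\Lambda$.
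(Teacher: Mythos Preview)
Your proposal is correct and follows essentially the same route as the paper: argue by contradiction, conclude $\scrA^\obphi$ is constant on $\Lambda$ via Corollary~\ref{prop:stableconstant2}, establish that $i_*^k$ is an isomorphism for all $k$, and apply Lemma~\ref{relshco1}. The only cosmetic difference is that the paper handles the isomorphism more directly---observing that for $k\ge 1$ both $H^k(\Lambda;\scrA^\obphi)$ and $H^k(\Lambda';\scrA^\obphi)$ vanish (since $\Lambda$ and $\Lambda'$ are contractible), so $i_*^k$ is trivially an isomorphism, and treating $k=0$ separately---whereas you invoke naturality of the sheaf/singular comparison and homotopy invariance; your version is arguably more careful about the point you yourself flag, namely that Lemma~\ref{relshco1} requires $i_*^k$ to be an isomorphism rather than the groups being abstractly isomorphic.
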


\proof
Suppose there are no bifurcation points in $\mathrm{\Lambda}\smin \mathrm{\Lambda}'$. This implies that $\obphi$ is stable on $\mathrm{\Lambda}$. 
Since $\mathrm{\Lambda}$ is contractible and locally contractible, it is simply connected and  locally path connected. It follows from Proposition \ref{prop:stableconstant2} that $\scrA^\obphi$ is a constant sheaf on $\mathrm{\Lambda}$. Since $\mathrm{\Lambda}'$ is a deformation retract of $\mathrm{\Lambda}$, the same holds for $\mathrm{\Lambda}'$ and $\scrA^\obphi|_{\mathrm{\Lambda}'} \cong \scrA^\obphi$. This implies that $H^0(\mathrm{\Lambda};\scrA^\obphi) \cong H^0(\mathrm{\Lambda}';\scrA^\obphi)$. By Corollary \ref{prop:stableconstant4}, since $i^k_* \colon H^k(\mathrm{\Lambda};\sR)\to H^k(\mathrm{\Lambda}';\sR)$ is an isomorphism for all $k$, we have that $H^k(\mathrm{\Lambda};\scrA^\obphi) \cong H^k(\mathrm{\Lambda}';\scrA^\obphi)\cong 0$ for all $k\ge 1$. Combining these statements gives $H^k(\mathrm{\Lambda};\scrA^\obphi)\cong H^k(\mathrm{\Lambda}';\scrA^\obphi)$ for all $k$.
This implies by Lemma \ref{relshco1} that 
$H^k(\mathrm{\Lambda}, \mathrm{\Lambda}';\scrA^\obphi)\cong 0$ for all $k$, which
contradicts the assumption that $H^k(\mathrm{\Lambda}, \mathrm{\Lambda}';\scrA^\obphi)\neq 0$ for some $k$.
Therefore, $\obphi$ is not stable on $\mathrm{\Lambda}\smin\mathrm{\Lambda}'$ and there exists a bifurcation point
$\lambda_0\in \mathrm{\Lambda}\smin \mathrm{\Lambda}'$.
\eproof

\section{Examples of one-parameter bifurcations}
\label{sec:exonepar}
In this section we discuss a number of standard one-parameter bifurcations such as a saddle-node bifurcation and a pitchfork bifurcation. We will also examine bifurcation at multiple bifurcation points. The objective is to show that sheaf cohomology picks up bifurcarions. At a later stage we will discuss the more practical side of computing sheaf cohomology from limited data.

\subsection{One-parameter bifurcations at a single parameter value}
\label{single1par}
In this subsection we list three fundamental bifurcations in one-parameter systems. We apply the above results to compute the sheaf cohomology and to compare the criteria.
 
For example if $\mathrm{\Lambda} =\R$ or $\mathrm{\Lambda} = I$, a bounded interval, then the above theorem applies. This is of interest for one-parameter bifurcations.
The following lemma addresses the case where $\obphi$ has one bifurcation point on $\R$, which will assist in computations.

\begin{lemma}\label{lemma:pitchacyclic}
Let $\scrF$ be a sheaf of rings on $\mathrm{\Lambda}=\R$, such that $\scrF$ is a constant sheaf on both $(-\infty, \lambda_0)$ and $(\lambda_0,\infty)$ for some $\lambda_0\in \R$. Then, $\scrF$ is acyclic, i.e. $H^k(\mathrm{\Lambda},\scrF)=0$ for all $k\ge 1$.
\end{lemma}

\proof
 Let $\epsilon>0$  and let $B_\epsilon$ denote the interval $(\lambda_0-\epsilon,\lambda_0+\epsilon)$. There is a restriction cohomomorphism $r\colon \scrF \leadsto \restr{\scrF}{B_\epsilon}$. We will show this induces an isomorphism of cohomology:
 \begin{equation}
     \label{cohom11}
 r^*\colon H^*(\R;\scrF) \to H^*\big(B_\epsilon; \restr{\scrF}{B_\epsilon}\big).
  \end{equation}
 First we address global sections. Because $\scrF$ is constant on $(-\infty, \lambda_0)$ and $(\lambda_0,\infty)$, sections in $\mathrm{\Gamma}(\restr{\scrF}{B_\epsilon})$ extend uniquely to sections in $\mathrm{\Gamma}(\scrF)$. Thus, $r_0^*\colon \mathrm{\Gamma}(\scrF)\to \mathrm{\Gamma}(\restr{\scrF}{B_\epsilon})$ is an isomorphism. 
 For $k>1$, $H^k(\R;\scrF)$ and $H^k\big(B_\epsilon; \restr{\scrF}{B_\epsilon}\big)$ vanish, since intervals have covering dimension 1, cf.\ \cite[Lemma 2.7.3 and Proposition 3.2.2]{Schapira}. So the maps $$r_k^*\colon H^k(\R;\scrF) \to H^k\big(B_\epsilon; \restr{\scrF}{B_\epsilon}\big)$$ are trivially isomorphisms. Now we consider $k=1$. Let $\R^* = \R \smin \{\lambda_0\}$, so that $B_\epsilon$ and $\R^*$ form a cover of $\R$. Note that $\restr{\scrF}{\R^*}$ is locally constant, with vanishing higher cohomology groups. There is a Mayer-Vietoris exact sequence:
\[
\begin{tikzcd}[column sep=small]
 0  \arrow{r}{}  & 
 \mathrm{\Gamma}(\scrF) \arrow{r}{\alpha}  &
 \mathrm{\Gamma}\big(\restr{\scrF}{B_\epsilon}\big)\oplus\mathrm{\Gamma}\big(\restr{\scrF}{\R^*}\big)  \arrow{r}{\beta} \ar[draw=none]{d}[name=X, anchor=center]{} &
 \mathrm{\Gamma}\big(\restr{\scrF}{B_\epsilon\cap \R^*}\big) \ar[rounded corners,
            to path={ -- ([xshift=2ex]\tikztostart.east)
                      |- (X.center) \tikztonodes
                      -| ([xshift=-2ex]\tikztotarget.west)
                      -- (\tikztotarget)}]{dll}[at end]{\delta} \\ & H^1(\R;\scrF) \arrow{r} & 
 H^1\big(B_\epsilon; \restr{\scrF}{B_\epsilon}\big)\oplus H^1\big(\R^*; \restr{\scrF}{\R^*}\big) \arrow{r} & 
 H^1\big(B_\epsilon\cap \R^*; \restr{\scrF}{B_\epsilon\cap \R^*}\big) \arrow{r} &
 0.
\end{tikzcd}
\]
Since $H^1(\R; \restr{\scrF}{\R^*})$ and $H^1(B_\epsilon\cap \R^*; \restr{\scrF}{B_\epsilon\cap \R^*})$ vanish the sequence simplifies to:
\[
\begin{tikzcd}
 0  \arrow{r}  & 
 \mathrm{\Gamma}(\scrF) \arrow{r}{\alpha}  &
 \mathrm{\Gamma}\big(\restr{\scrF}{B_\epsilon}\big)\oplus\mathrm{\Gamma}\big(\restr{\scrF}{\R^*}\big)  \arrow{r}{\beta} \ar[draw=none]{d}[name=X, anchor=center]{} &
 \mathrm{\Gamma}\big(\restr{\scrF}{B_\epsilon\cap \R^*}\big) \ar[rounded corners,
            to path={ -- ([xshift=2ex]\tikztostart.east)
                      |- (X.center) \tikztonodes
                      -| ([xshift=-2ex]\tikztotarget.west)
                      -- (\tikztotarget)}]{dll}[at end]{\delta} \\
& H^1(\R;\scrF) \arrow{r}{r_1^*} & 
 H^1(B_\epsilon; \restr{\scrF}{B_\epsilon}) \arrow{r} & 
 0.
\end{tikzcd}
\]
The map $\beta$ is surjective, since the restriction from $\mathrm{\Gamma}\big(\restr{\scrF}{\R^*}\big)$ to $\mathrm{\Gamma}\big(\restr{\scrF}{B_\epsilon\cap \R^*}\big)$ is surjective. Following the sequence yields $\Img \, \delta = \ker r_1^* = 0$. $\im \, r_1^* = \ker 0 = H^1(B_\epsilon; \restr{\scrF}{B_\epsilon})$, so $r_1^*$ is also surjective. 
This implies that the restriction cohomomorphism $r\colon \scrF \leadsto \restr{\scrF}{B_\epsilon}$ induces an isomorphism on cohomology and establishes \eqref{cohom11}.
Indeed, for $\epsilon'<\epsilon$, the restriction cohomomorphism from $\restr{\scrF}{B_\epsilon}$ to $\restr{\scrF}{B_{\epsilon'}}$ is an isomorphism, again giving an isomorphism of cohomology. So,
$$H^*\big(\R;\scrF\big) \approx \varinjlim_{\epsilon>0} H^*\big(B_\epsilon; \restr{\scrF}{B_\epsilon}\big).$$
We can compute the limit using \cite[Theorem 10.6]{Bredon}:
$$\varinjlim_{\epsilon>0} H^*\big(B_\epsilon; \restr{\scrF}{B_\epsilon}\big) \approx H^*\big(\{\lambda_0\}; \restr{\scrF}{\{\lambda_0\}}\big).$$
Since $\restr{\scrF}{\{\lambda_0\}}$ is flasque (restriction maps are surjective), it is acyclic, completing the proof.
\eproof

The same results hold for $\mathrm{\Lambda} = I$, a bounded, or semi-bounded interval. 
In the applications below $\mathrm{\Lambda}$ is typically the real line.

\begin{lemma}\label{lemma:pitchacyclic2}
Let $\scrF$ be a sheaf of rings on $\mathrm{\Lambda}$ and let $\mathrm{\Lambda}' \xhookrightarrow{i} \mathrm{\Lambda}$.
Assume that $\scrF$ and $\scrF|_{\mathrm{\Lambda}'}$ are acyclic. 
If
\begin{enumerate}
    \item [(i)] $i_*^0\colon H^0(\mathrm{\Lambda};\scrA^\obphi) \to H^0(\mathrm{\Lambda}';\scrA^\obphi)$ is injective, then $\im i_*^0\cong H^0(\mathrm{\Lambda};\scrF)$ and
    \[
H^1(\mathrm{\Lambda},\mathrm{\Lambda}';\scrF) \cong \frac{H^0(\mathrm{\Lambda}';\scrF)}{\im i_*^0},\quad\text{and}\quad
H^k(\mathrm{\Lambda},\mathrm{\Lambda}';\scrF)\cong 0,\quad\text{for}\quad k\neq 1;
\]
\item [(ii)] $i_*^0\colon H^0(\mathrm{\Lambda};\scrA^\obphi) \to H^0(\mathrm{\Lambda}';\scrA^\obphi)$ is surjective, then
\[
H^0(\mathrm{\Lambda},\mathrm{\Lambda}';\scrF) \cong \ker i_*^0,\quad\text{and}\quad
H^k(\mathrm{\Lambda},\mathrm{\Lambda}';\scrF)\cong 0,\quad\text{for}\quad k\neq 0.
\]
\end{enumerate}
\end{lemma}

\proof
As before for triple $(\mathrm{\Lambda}',\varnothing) \xhookrightarrow{i}(\mathrm{\Lambda},\varnothing) \xhookrightarrow{j} (\mathrm{\Lambda},\mathrm{\Lambda}')$ we have the long exact sequence,
\[
\begin{tikzcd}[column sep=large]
 0  \arrow{r}{\delta^0}  & 
 H^0(\mathrm{\Lambda},\mathrm{\Lambda}';\scrF) \arrow{r}{j_*^0}  &
 H^0(\mathrm{\Lambda};\scrF)  \arrow{r}{i_*^0} \ar[draw=none]{d}[name=X, anchor=center]{} &
 H^0(\mathrm{\Lambda}';\scrF) \ar[rounded corners,
            to path={ -- ([xshift=2ex]\tikztostart.east)
                      |- (X.center) \tikztonodes
                      -| ([xshift=-2ex]\tikztotarget.west)
                      -- (\tikztotarget)}]{dll}[at end]{\delta^1} \\ 
& H^1(\mathrm{\Lambda},\mathrm{\Lambda}';\scrF) \arrow{r}{j_*^1} & 
 H^1(\mathrm{\Lambda};\scrF) \arrow{r}{i_*^1} & 
 H^1(\mathrm{\Lambda}';\scrF) \arrow{r}{\delta^2} &
 \cdots.
\end{tikzcd}
\]
Since, by Lemma \ref{lemma:pitchacyclic}, $\scrF$ is acyclic  we obtain the truncated sequence
\begin{equation}
\label{exact11}
\begin{tikzcd}[column sep=small]
 0  \arrow{r}{\delta^0}  & 
 H^0(\mathrm{\Lambda},\mathrm{\Lambda}';\scrF) \arrow{r}{j_*^0}  &
 H^0(\mathrm{\Lambda};\scrF) \arrow{r}{i_*^0}  &
 H^0(\mathrm{\Lambda}';\scrF) \arrow{r}{\delta^1}  &
 H^1(\mathrm{\Lambda},\mathrm{\Lambda}'; \scrF) \arrow{r}{j_*^1}  &  
 0.
\end{tikzcd}
\end{equation}
Since $i_*^0$ is injective and thus $\ker i_*^0 =0 = \im j_*^0$.
Moreover, $\ker j_*^0 = \im \delta^0 =0$, which implies that $H^0(\mathrm{\Lambda},\mathrm{\Lambda}';\scrF) \cong 0$. Consequently, we have the short exact sequence
\[
\begin{tikzcd}[column sep=small]
0 \arrow{r}{j_*^0}  &
 H^0(\mathrm{\Lambda};\scrF) \arrow{r}{i_*^0}  &
 H^0(\mathrm{\Lambda}';\scrF) \arrow{r}{\delta^1}  &
 H^1(\mathrm{\Lambda},\mathrm{\Lambda}'; \scrF) \arrow{r}{j_*^1}  &  
 0,
\end{tikzcd}
\]
from which the result for $H^1(\mathrm{\Lambda},\mathrm{\Lambda}'\scrF)$ follows.
The cohomology $H^k(\mathrm{\Lambda},\mathrm{\Lambda}'\scrF)\cong 0$, for $k\ge 2$ follows from Lemma \ref{relshco1}, which completes the proof of (i).

As for (ii) we have the truncated exact sequence in \eqref{exact11}. Now $i_*^0$ is surjective
which implies that $\ker \delta^1 = \im i_*^0 = H^0(\mathrm{\Lambda}';\scrF)$. Therefore, 
$\ker j_*^1 = \im \delta^1 =0$ and thus $j_*^1$ is injective. Consequently, $H^1(\mathrm{\Lambda},\mathrm{\Lambda}'\scrF) \cong 0$. We now have the short exact sequence
\[
\begin{tikzcd}[column sep=small]
 0  \arrow{r}{\delta^0}  & 
 H^0(\mathrm{\Lambda},\mathrm{\Lambda}';\scrF) \arrow{r}{j_*^0}  &
 H^0(\mathrm{\Lambda};\scrF) \arrow{r}{i_*^0}  &
 H^0(\mathrm{\Lambda}';\scrF) \arrow{r}{\delta^1}  &
 0,
\end{tikzcd}
\]
which implies that $H^0(\mathrm{\Lambda},\mathrm{\Lambda}';\scrF) \cong \ker i_*^0$. The relative homology for $k\ge 1$ follows from Lemma \ref{relshco1}.
\eproof

\begin{remark}
The sheaf cohomology groups of the abelian attractor sheaf can be equipped with a cup product from the ring structure of the sheaf. We leave cup product computations and their interpretation for later work.
\end{remark}

\subsubsection{The pitchfork bifurcation}
Consider a parametrized dynamical system  on $X = \R\cup\{-\infty,\infty\}$, the 2-point compactification of $\R$, experiencing a pitchfork bifurcation, cf.\ Figure \ref{fig:pitchdiagram}.
The parametrized flow is defined via the differential equation
\[
\dot x = \lambda x- x^3, \quad x\in \R,~\lambda \in \R.
\]
Before the bifurcation point at $\lambda_0=0$
there are two repelling fixed points at $+\infty$ and $-\infty$ and
a single attracting fixed point at $x=0$. After $\lambda_0=0$, there are two additional attracting
fixed points $x=\pm x_\lambda$ and $x=0$ has changed to a repelling fixed point. 
We fix a parametrization:
$$
\obpsi\colon \mathrm{\Lambda} \to \sDS(\T,X), 
$$
where $\mathrm{\Lambda} = \R$ is parameter space, $\T = \R^+$ is the time space and $X$ is the 2-point compactification of $\R$.

\tikzset{every picture/.style={line width=0.75pt}} 
\begin{figure}[]
\label{fig:pitchdiagram}
    \centering

\begin{tikzpicture}[x=0.75pt,y=0.75pt,yscale=-1.6,xscale=2]

\draw  [color={rgb, 255:red, 200; green, 200; blue, 200 }  ,draw opacity=1 ][fill={rgb, 255:red, 200; green, 200; blue, 200 }  ,fill opacity=1 ] (235.2,113.97) -- (300.2,113.97) -- (300.2,139.97) -- (235.2,139.97) -- cycle ;
\draw    (168.48,45.27) -- (168.48,76.27) ;
\draw [shift={(168.48,78.27)}, rotate = 270] [color={rgb, 255:red, 0; green, 0; blue, 0 }  ][line width=0.75]    (4.37,-1.32) .. controls (2.78,-0.56) and (1.32,-0.12) .. (0,0) .. controls (1.32,0.12) and (2.78,0.56) .. (4.37,1.32)   ;
\draw [line width=1.5]    (204.12,95) .. controls (204.12,127.7) and (261.2,129.63) .. (303.45,129.7) ;
\draw    (256.18,75.27) -- (256.18,85.27) ;
\draw [shift={(256.18,73.27)}, rotate = 90] [color={rgb, 255:red, 0; green, 0; blue, 0 }  ][line width=0.75]    (4.37,-1.32) .. controls (2.78,-0.56) and (1.32,-0.12) .. (0,0) .. controls (1.32,0.12) and (2.78,0.56) .. (4.37,1.32)   ;
\draw    (255.97,40.27) -- (255.97,50.27) ;
\draw [shift={(255.97,52.27)}, rotate = 270] [color={rgb, 255:red, 0; green, 0; blue, 0 }  ][line width=0.75]    (4.37,-1.32) .. controls (2.78,-0.56) and (1.32,-0.12) .. (0,0) .. controls (1.32,0.12) and (2.78,0.56) .. (4.37,1.32)   ;
\draw    (255.97,153.27) -- (255.97,141.27) ;
\draw [shift={(255.97,139.27)}, rotate = 90] [color={rgb, 255:red, 0; green, 0; blue, 0 }  ][line width=0.75]    (4.37,-1.32) .. controls (2.78,-0.56) and (1.32,-0.12) .. (0,0) .. controls (1.32,0.12) and (2.78,0.56) .. (4.37,1.32)   ;
\draw [color={rgb, 255:red, 155; green, 155; blue, 155 }  ,draw opacity=1 ][line width=1.5]  [dash pattern={on 5.63pt off 4.5pt}]  (140,24.47) -- (303.2,24.63) ;
\draw [color={rgb, 255:red, 155; green, 155; blue, 155 }  ,draw opacity=1 ][line width=1.5]  [dash pattern={on 5.63pt off 4.5pt}]  (140,165.47) -- (303.2,165.63) ;
\draw [color={rgb, 255:red, 155; green, 155; blue, 155 }  ,draw opacity=0.25 ]   (204.04,32.37) -- (204.2,157.63) ;
\draw [color={rgb, 255:red, 0; green, 0; blue, 0 }  ,draw opacity=1 ][line width=1.5]  [dash pattern={on 5.63pt off 4.5pt}]  (204.2,94.63) -- (303.04,94.37) ;
\draw [line width=1.5]    (204.12,95) .. controls (204.12,65.7) and (265.28,62.27) .. (304.53,63) ;
\draw [color={rgb, 255:red, 155; green, 155; blue, 155 }  ,draw opacity=0.35 ]   (235.2,99.63) -- (300.2,99.63) ;
\draw    (256.18,114.27) -- (256.18,104.27) ;
\draw [shift={(256.18,116.27)}, rotate = 270] [color={rgb, 255:red, 0; green, 0; blue, 0 }  ][line width=0.75]    (4.37,-1.32) .. controls (2.78,-0.56) and (1.32,-0.12) .. (0,0) .. controls (1.32,0.12) and (2.78,0.56) .. (4.37,1.32)   ;
\draw [line width=1.5]    (141.2,94.77) -- (204.2,94.63) ;
\draw    (168.48,143.27) -- (168.48,113.27) ;
\draw [shift={(168.48,111.27)}, rotate = 90] [color={rgb, 255:red, 0; green, 0; blue, 0 }  ][line width=0.75]    (4.37,-1.32) .. controls (2.78,-0.56) and (1.32,-0.12) .. (0,0) .. controls (1.32,0.12) and (2.78,0.56) .. (4.37,1.32)   ;

\draw (311,20.73) node [anchor=north west][inner sep=0.75pt]  [font=\large]  {$\infty $};
\draw (307,161.73) node [anchor=north west][inner sep=0.75pt]  [font=\large]  {$-\infty $};
\draw (311,91.73) node [anchor=north west][inner sep=0.75pt]  [font=\large]  {$0$};
\draw (141.2,79.03) node [anchor=north west][inner sep=0.75pt]  [font=\large]  {$\mathrm{\Lambda} $};
\draw (190,143.73) node [anchor=north west][inner sep=0.75pt]  [font=\large]  {$X$};
\draw (269.7,103.03) node [anchor=north west][inner sep=0.75pt]  [font=\large]  {$\mathrm{\Lambda} '$};
\draw (287,116.73) node [anchor=north west][inner sep=0.75pt]  [font=\large]  {$U$};
\end{tikzpicture}

\caption{In the pitchfork bifurcation, the section on $\mathrm\Lambda'\subset \mathrm\Lambda$ defined by $\sigma(\lambda) = \big(\lambda,\phi^\lambda, \omega_{\phi^\lambda}(U)\big)$ fails to extend globally. }
\end{figure}
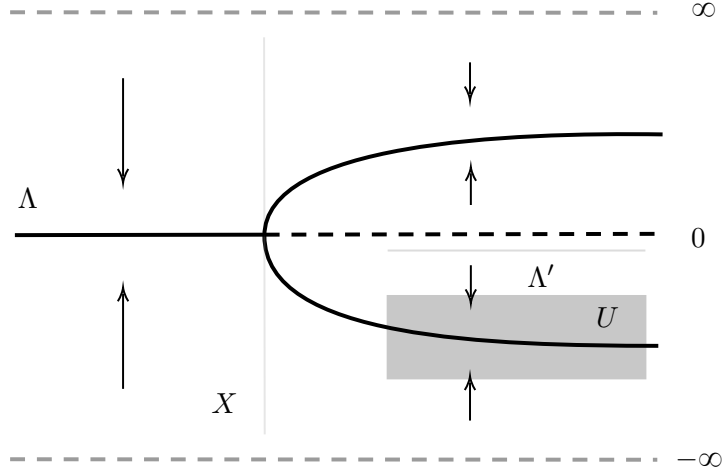\label{pitchfork11}

\begin{lemma}\label{lem:pitchglobal}
The global sections of the abelian attractor sheaf for the normal form pitchfork bifurcation are $H^0(\mathrm{\Lambda};\scrA^\obpsi)\cong \mathrm{\Gamma}\bigl(\scrA^\obpsi\bigr) \cong \Z_2^3$. 
\end{lemma}
\proof
First, consider the global sections of the attractor lattice sheaf $\obpsi^{-1}\scrS^\sAtt$, which are uniquely characterized by their value at the bifurcation point $\lambda=0$. Assigning $\emptyset$ and $X$ to every parameter value gives the bottom and top elements of $\mathrm{\Gamma}(\obpsi^{-1}\scrS^\sAtt)$. Then, we have the section which assigns $\lambda\leq0$ the sole attracting fixed point $x=0$, and $\lambda>0$ the interval $[-x_\lambda, x_\lambda]$. Finally, there are two sections which are $[-\infty, 0]$ and $[0,\infty]$ for $\lambda\leq 0$, but $[-\infty, x_\lambda]$ and $[-x_\lambda, \infty]$ for $\lambda>0$ respectively. This yields a five element lattice with three join irreducible elements: the latter three sections. Applying the boolean ring functor to this lattice yields $\Z_2^3$.
\eproof \\
In later results we will omit the above types of computations.
\begin{proposition}\label{prop:pitchrelative}
Let $\mathrm{\Lambda}' := [a,\infty)$. If $a>0$, then $H^k(\mathrm{\Lambda}, \mathrm{\Lambda}'; \scrA^\obpsi)\cong \Z^2_2$ for $k=1$, and $H^k(\mathrm{\Lambda},\mathrm{\Lambda}'; \scrA^\obpsi)=0 $ otherwise. When $a\leq 0$, all relative cohomology groups vanish.
\end{proposition}

\proof
Lemma \ref{lem:pitchglobal} gives $\mathrm{\Gamma}\bigl(\scrA^\obpsi\bigr) \cong \Z_2^3$. For $a>0$, we have $H^0(\mathrm{\Lambda}';\scrA^\obpsi)\cong\mathrm{\Gamma}\big(\restr{\scrA^\obpsi}{\mathrm{\Lambda}'}\big)\cong \Z^5_2$, and injectivity of $i_*^0$,
which by Lemma \ref{lemma:pitchacyclic2}(i) yields $H^1(\mathrm{\Lambda},\mathrm{\Lambda}'; \scrA^\obpsi) \cong \Z^2_2$. 
For $a\le 0$, we have $H^0(\mathrm{\Lambda}';\scrA^\obpsi)\cong \mathrm{\Gamma}\big(\restr{\scrA^\obpsi}{\mathrm{\Lambda}'}\big)\cong \Z_2^3$, 
which implies $H^1(\mathrm{\Lambda},\mathrm{\Lambda}'; \scrA^\obpsi) =0$. 
Since by Lemma \ref{lemma:pitchacyclic} both $\scrA^\obpsi$ and $\scrA^\obpsi|_{\mathrm{\Lambda}'}$\footnote{If $\mathrm{\Lambda}'$ does not contain a bifurcation point then acyclicity follows from the fact that $\scrA^\obpsi|_{\mathrm{\Lambda}'}$ is a constant sheaf on a contractible manifold.} are acyclic,
the higher order relative cohomology groups vanish by  Lemma \ref{lemma:pitchacyclic2}(i). 
\eproof

\begin{proposition}\label{prop:pitchrelative2}
Let $\mathrm{\Lambda}' := (-\infty,a]$. Then, $H^k(\mathrm{\Lambda}, \mathrm{\Lambda}'; \scrA^\obpsi)\cong 0$ for all $k$
and for all $a\in \R$.
\end{proposition}

\proof
Note that $\mathrm{\Gamma}\bigl(\scrA^\obpsi\bigr) \cong \mathrm{\Gamma}\big(\restr{\scrA^\obpsi}{\mathrm{\Lambda}'}\big)\cong \Z^3_2$
for all $a\in \R$ (the same computations as in Lemma \ref{lem:pitchglobal} apply to $\Lambda'$). Therefore, $H^0(\mathrm{\Lambda};\scrA^\obpsi)\cong H^0(\mathrm{\Lambda}';\scrA^\obpsi)$ for all $a\in \R$ and thus by Lemma \ref{lemma:pitchacyclic2}(i) $H^k(\mathrm{\Lambda},\mathrm{\Lambda}'; \scrA^\obpsi) \cong 0$
for all $k$.
\eproof

\begin{theorem}
\label{thm:pitchchm}
Let $\obphi$ be a parametrized dynamical system over $\mathrm{\Lambda}$ conjugate to the above canonical parametrization $\obpsi$ for the pitchfork bifurcation. Then,
\[
\scrA^\obphi ~\text{~ is acyclic and ~}~ H^0(\mathrm{\Lambda};\scrA^\obphi) \cong\Z_2^3.
\]
Moreover,
there exists a value $\lambda_0\in \R$ such that
\[
H^k\bigl(\mathrm{\Lambda}, \mathrm{\Lambda}';\scrA^\obphi\bigr) \cong \begin{cases} \Z_2^2 &\textnormal{if } k = 1\textnormal{ and } a>\lambda_0; \\
0 & \textnormal{if $k\neq  1$ or $a\le \lambda_0$,} \end{cases}
\]
where $\mathrm{\Lambda}' = [a,\infty)$,
Furthermore, for $\mathrm{\Lambda}' := (-\infty,a]$, then
$H^k\bigl(\mathrm{\Lambda}, \mathrm{\Lambda}';\scrA^\obphi\bigr) \cong 0$ for all $k$ and for all $a\in \R$.
\end{theorem}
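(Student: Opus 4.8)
The plan is to reduce the whole statement to the explicit computations already performed for the canonical parametrization $\obzeta$, transporting them along the conjugacy by means of the Conjugacy Invariance Theorem. First I would invoke Theorem~\ref{thm:CIT}: since $\obphi$ is conjugate to $\obzeta$ and $X$ is a compact metric space, the associated pullback \etale spaces over $\Lambda$ are homeomorphic, so their sheaves of sections are isomorphic. Applying the Boolean ring functor and sheafification --- both functorial, hence compatible with this isomorphism --- yields an isomorphism of sheaves of rings $\scrA^\obphi \cong \scrA^\obzeta$ lying over the identity of the parameter line. A sheaf isomorphism over $\mathrm{id}_\Lambda$ induces isomorphisms on all cohomology groups and, restricting over any $\Lambda'$, on all relative groups $H^k(\Lambda,\Lambda';-)$ through the long exact sequence of the pair; consequently every cohomological quantity attached to $\scrA^\obphi$ coincides with the corresponding one for $\scrA^\obzeta$.

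With this reduction each assertion is read off directly. Acyclicity of $\scrA^\obphi$ and $H^0(\Lambda;\scrA^\obphi)\cong\Z_2^3$ follow because $\scrA^\obzeta$ is constant on $(-\infty,0)$ and on $(0,\infty)$ --- off the bifurcation value the system is stable, so the sheaf is locally constant and hence constant on each simply connected half-line --- so Lemma~\ref{lemma:pitchacyclic} gives acyclicity while the global-section count in the proof of Proposition~\ref{prop:pitchrelative} supplies the $\Z_2^3$. The relative groups for $\Lambda'=[a,\infty)$ are exactly those computed in Proposition~\ref{prop:pitchrelative}, and those for $\Lambda'=(-\infty,a]$ are those of Proposition~\ref{prop:pitchrelative2}; transporting them along the isomorphism produces the stated values.

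The only genuine obstacle is matching the bifurcation value and the orientation of the parameter line. Theorem~\ref{thm:CIT} delivers a sheaf isomorphism fixing $\lambda$, so literally it identifies $\scrA^\obphi$ with $\scrA^\obzeta$ only when both systems bifurcate at the same value. To obtain the ``$\exists\,\lambda_0$'' form I would precompose with the homeomorphism of $\R$ translating the bifurcation value $\lambda_0$ of $\obphi$ to the origin; pulling a sheaf back along a homeomorphism of the base preserves absolute and relative cohomology, so the computations for $\obzeta$ at $0$ transfer verbatim to $\obphi$ at $\lambda_0$. The asymmetry between the two choices of $\Lambda'$ then reflects the orientation built into $\obzeta$, in which the two extra attractors appear for $\lambda>0$: the half-line $[a,\infty)$ with $a>\lambda_0$ sees only the enlarged attractor lattice, so its global sections outnumber those of $\Lambda$ and yield the nonzero $\Z_2^2$ in degree~$1$, whereas $(-\infty,a]$ always contains the pre-bifurcation half-line carrying the smaller lattice, so its global sections agree with those of $\Lambda$, forcing $i_*^0$ to be an isomorphism and all relative groups to vanish. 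Once this identification is fixed, the theorem follows by assembling the four cited results.
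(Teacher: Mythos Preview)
Your proposal is correct and follows exactly the same approach as the paper: invoke Theorem~\ref{thm:CIT} to transport the problem to the canonical model $\obzeta$, then read off the absolute and relative cohomology from Lemma~\ref{lemma:pitchacyclic} and Propositions~\ref{prop:pitchrelative} and~\ref{prop:pitchrelative2}. The paper's proof is a one-line citation of precisely these four results; your additional discussion of the bifurcation value and orientation merely makes explicit what the paper leaves implicit.
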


\proof
This follows immediately from Theorem \ref{thm:CIT}, Lemma \ref{lemma:pitchacyclic}, and Propositions \ref{prop:pitchrelative} and \ref{prop:pitchrelative2}. 
\eproof

This theorem can be applied locally in parameter space. If $\obphi\colon \R \to \sDS(\R; I)$ is some parametrized dynamical system such that  $\obphi$ experiences a pitchfork bifurcation on an open set $U$, then $\restr{\scrA^\obphi}{U}$ has the above cohomology groups. 
Another important observation is that the relative cohomology 
in the example below is the same for a local pitchfork bifuraction.
\begin{example}
\label{pitchagain}
Let $\obphi$ be a parametrized flow over $\mathrm{\Lambda} = \R$ on the interval $X=[-1,1]$ 
with a single attracting fixed point at $x=0$ for $\lambda\le 0$. 
This system is a semi-flow with $\T=\R^+$.
For $\lambda\ge 0$ the system undergoes a pitchfork bifurcation with two branches $\pm x_\lambda$ of attracting fixed points converging to $\pm 1$ respectively as $\lambda\to +\infty$, cf.\ Figure \ref{fig:pitchdiagram}.
If we repeat the analysis in Propositions \ref{prop:pitchrelative} and \ref{prop:pitchrelative2}
the sheaf cohomology over $\mathrm{\Lambda}$ is different: $\scrA^\obphi$ is acyclic and $H^0(\mathrm{\Lambda};\scrA^\obphi) \cong \Z_2$. On the other hand the relative sheaf cohomologies
$H^k\bigl(\R, [a,\infty);\scrA^\obphi\bigr)$ and $H^k\bigl(\R, (-\infty,a];\scrA^\obphi\bigr)$ are the same.
\end{example}

\subsubsection{The saddle-node bifurcation}\label{sn1}
Consider a parametrized dynamical system  on  $X = \R\cup\{-\infty,\infty\}$, the 2-point compactification of $\R$, experiencing a saddle-node bifurcation. 
The parametrized flow is defined via the differential equation
\[
\dot x = \lambda - x^2, \quad x\in \R,~\lambda \in \R,
\]
and $+\infty$ and $-\infty$ are a repelling and attracting fixed point respectively.
Before the bifurcation point at $\lambda_0=0$, the entire interval flows from $+\infty$ to $-\infty$. After $\lambda_0=0$, there is an additional attracting and repelling fixed point. 
We fix a parametrization:
$$
\obpsi\colon \mathrm{\Lambda} \to \sDS(\T,X), 
$$
where $\mathrm{\Lambda} = \R$ is parameter space, $\T = \R$ is the time space and $X$ is the 2-point compactification of $\R$.
Lemma \ref{lemma:pitchacyclic} again shows that the attractor sheaf $\scrA^\obpsi$ has vanishing higher order cohomology, but relative cohomology recognizes the bifurcations.

\tikzset{every picture/.style={line width=0.5pt}} 
\begin{figure}[]
\label{fig:saddlediagram}
    \centering

\begin{tikzpicture}[x=0.75pt,y=0.75pt,yscale=-1.6,xscale=2]

\draw  [color={rgb, 255:red, 200; green, 200; blue, 200 }  ,draw opacity=1 ][fill={rgb, 255:red, 200; green, 200; blue, 200 }  ,fill opacity=1 ] (234.2,44.3) -- (299.2,44.3) -- (299.2,75.3) -- (234.2,75.3) -- cycle ;
\draw    (167.48,75.93) -- (167.48,106.93) ;
\draw [shift={(167.48,108.93)}, rotate = 270] [color={rgb, 255:red, 0; green, 0; blue, 0 }  ][line width=0.75]    (4.37,-1.32) .. controls (2.78,-0.56) and (1.32,-0.12) .. (0,0) .. controls (1.32,0.12) and (2.78,0.56) .. (4.37,1.32)   ;
\draw [line width=1.5]  [dash pattern={on 5.63pt off 4.5pt}]  (203.12,91.67) .. controls (203.12,124.37) and (260.2,130.3) .. (302.45,130.37) ;
\draw    (255.18,80.93) -- (255.18,100.93) ;
\draw [shift={(255.18,78.93)}, rotate = 90] [color={rgb, 255:red, 0; green, 0; blue, 0 }  ][line width=0.75]    (4.37,-1.32) .. controls (2.78,-0.56) and (1.32,-0.12) .. (0,0) .. controls (1.32,0.12) and (2.78,0.56) .. (4.37,1.32)   ;
\draw    (254.97,28.93) -- (254.97,38.93) ;
\draw [shift={(254.97,40.93)}, rotate = 270] [color={rgb, 255:red, 0; green, 0; blue, 0 }  ][line width=0.75]    (4.37,-1.32) .. controls (2.78,-0.56) and (1.32,-0.12) .. (0,0) .. controls (1.32,0.12) and (2.78,0.56) .. (4.37,1.32)   ;
\draw    (254.97,137.93) -- (254.97,147.93) ;
\draw [shift={(254.97,149.93)}, rotate = 270] [color={rgb, 255:red, 0; green, 0; blue, 0 }  ][line width=0.75]    (4.37,-1.32) .. controls (2.78,-0.56) and (1.32,-0.12) .. (0,0) .. controls (1.32,0.12) and (2.78,0.56) .. (4.37,1.32)   ;
\draw [color={rgb, 255:red, 155; green, 155; blue, 155 }  ,draw opacity=1 ][line width=1.5]  [dash pattern={on 5.63pt off 4.5pt}]  (139,21.13) -- (302.2,21.3) ;
\draw [color={rgb, 255:red, 155; green, 155; blue, 155 }  ,draw opacity=1 ][line width=1.5]    (139,162.13) -- (302.2,162.3) ;
\draw [color={rgb, 255:red, 155; green, 155; blue, 155 }  ,draw opacity=0.25 ]   (203.04,29.03) -- (203.2,154.3) ;
\draw [color={rgb, 255:red, 155; green, 155; blue, 155 }  ,draw opacity=0.25 ]   (139.2,91.3) -- (302.04,91.03) ;
\draw [line width=1.5]    (203.12,91.67) .. controls (203.12,62.37) and (264.28,53.93) .. (303.53,54.67) ;
\draw [color={rgb, 255:red, 155; green, 155; blue, 155 }  ,draw opacity=0.35 ]   (234.2,103.3) -- (299.2,103.3) ;

\draw (310,17.4) node [anchor=north west][inner sep=0.75pt]  [font=\large]  {$\infty $};
\draw (306,158.4) node [anchor=north west][inner sep=0.75pt]  [font=\large]  {$-\infty $};
\draw (310,88.4) node [anchor=north west][inner sep=0.75pt]  [font=\large]  {$0$};
\draw (139.2,77.7) node [anchor=north west][inner sep=0.75pt] [font=\large]  {$\mathrm{\Lambda} $};
\draw (189,140.4) node [anchor=north west][inner sep=0.75pt]  [font=\large]  {$X$};
\draw (268.7,107.7) node [anchor=north west][inner sep=0.75pt]  [font=\large]  {$\mathrm{\Lambda} '$};
\draw (286,61.4) node [anchor=north west][inner sep=0.75pt]  [font=\large]  {$U$};

\end{tikzpicture}

\caption{A saddle-node bifurcation. The section on $\mathrm\Lambda'$ defined by $\sigma(\lambda) = \big(\lambda,\phi^\lambda, \omega_{\phi^\lambda}(U)\big)$ fails to extend globally. }
\end{figure}

\begin{proposition}\label{prop:saddlerelative}
Let $\mathrm{\Lambda}' = [a,\infty)$. If $a>0$, then
$H^k(\mathrm{\Lambda}, \mathrm{\Lambda}'; \scrA^\obpsi)\cong \Z_2$ for $k=1$, and vanishes otherwise. When $a\leq 0$, then $H^k(\mathrm{\Lambda}, \mathrm{\Lambda}'; \scrA^\obpsi)=0$ for all $k$.
\end{proposition}

\proof
The global sections are $H^0(\mathrm{\Lambda};\scrA^\obpsi)\cong \mathrm{\Gamma}\bigl(\scrA^\obpsi\bigr) \cong \Z_2^3$.
For $a>0$, we have $H^0(\mathrm{\Lambda}';\scrA^\obpsi)\cong\mathrm{\Gamma}\big(\restr{\scrA^\obpsi}{\mathrm{\Lambda}'}\big)\cong \Z_2^4$. The injectivity of $i_*^0$ and
 Lemma \ref{lemma:pitchacyclic2}(i) yields $H^1(\mathrm{\Lambda},\mathrm{\Lambda}'; \scrA^\obpsi) \cong \Z_2$. 
For $a\le 0$, we have $H^0(\mathrm{\Lambda}';\scrA^\obpsi)\cong \mathrm{\Gamma}\big(\restr{\scrA^\obpsi}{\mathrm{\Lambda}'}\big)\cong \Z_2^3$, 
which implies $H^1(\mathrm{\Lambda},\mathrm{\Lambda}'; \scrA^\obpsi) =0$. As before the higher order relative cohomology groups vanish by  Lemma \ref{lemma:pitchacyclic2}(i). 
\eproof

\begin{proposition}
\label{prop:saddlerelative2}
Let $\mathrm{\Lambda}' = (-\infty,a]$. If $a>0$, then 
$H^k(\mathrm{\Lambda}, \mathrm{\Lambda}'; \scrA^\obpsi)\cong 0$ for all $k$. When $a< 0$, then $H^0(\mathrm{\Lambda}, \mathrm{\Lambda}'; \scrA^\obpsi)\cong \Z_2$ and vanishes otherwise.
\end{proposition}

\proof
As before the global sections are $H^0(\mathrm{\Lambda};\scrA^\obpsi)\cong \mathrm{\Gamma}\bigl(\scrA^\obpsi\bigr) \cong \Z_2^3$.
For $a>0$, we have $H^0(\mathrm{\Lambda}';\scrA^\obpsi)\cong\mathrm{\Gamma}\big(\restr{\scrA^\obpsi}{\mathrm{\Lambda}'}\big)\cong \Z_2^3$. The injectivity of $i_*^0$ and
 Lemma \ref{lemma:pitchacyclic2}(i) yields $H^1(\mathrm{\Lambda},\mathrm{\Lambda}'; \scrA^\obpsi) \cong 0$. 
For $a< 0$, we have $H^0(\mathrm{\Lambda}';\scrA^\obpsi)\cong \mathrm{\Gamma}\big(\restr{\scrA^\obpsi}{\mathrm{\Lambda}'}\big)\cong \Z_2^2$. The surjectivity of $i_*^0$ and Lemma \ref{lemma:pitchacyclic2}(ii)
then implies that $H^0(\mathrm{\Lambda},\mathrm{\Lambda}'; \scrA^\obpsi) \cong \Z_2$. The higher order relative cohomology groups vanish by  Lemma \ref{lemma:pitchacyclic2}(i) and (ii). 
\eproof

\begin{theorem}
\label{saddlenode3}
Let $\obphi$ be a parametrized dynamical system over $\mathrm{\Lambda}$ conjugate to the above canonical parametrization $\obpsi$ for the saddle-node bifurcation. Then, 
\[
\scrA^\obphi ~\text{~ is acyclic and~}~ H^0(\mathrm{\Lambda};\scrA^\obphi) \cong\Z_2^3.
\]
Moreover,
there exists a value $\lambda_0\in \R$ such that
$$
H^k(\mathrm{\Lambda}, \mathrm{\Lambda}';\scrA^\obphi) \cong \begin{cases} \Z_2 &\textnormal{if } k = 1\textnormal{ and } a>\lambda_0 \\
0 & k\neq 1, \textnormal{or~} a\le \lambda_0, \end{cases} \quad \text{with}\quad\mathrm{\Lambda}' = [a,\infty),
$$
$$
H^k(\mathrm{\Lambda}, \mathrm{\Lambda}';\scrA^\obphi) \cong \begin{cases} \Z_2 &\textnormal{if } k = 0\textnormal{ and } a< \lambda_0 \\
0 & k\neq 0, \textnormal{or~} a\ge \lambda_0, \end{cases}\quad\text{with}\quad \mathrm{\Lambda}' = (-\infty,a].
$$
\end{theorem}

\proof
Apply Theorem \ref{thm:CIT}, Lemma \ref{lemma:pitchacyclic} and Propositions \ref{prop:saddlerelative}
and \ref{prop:saddlerelative2}. 
\eproof

\begin{remark}\label{rem:morserelative}
The generator of $H^k\bigl(\R, (-\infty,a];\scrA^\obpsi\bigr)$ when $a<{\lambda_0}$ is the sum of two global sections of attractors: the bottom fixed point and the maximal attractor. The two coincide before the bifurcation point, which leaves their sum zero. Afterwards, however, this corresponds to the Morse set between the top two fixed points. Continuation of this Morse set to the empty set via a global section yields nontrivial relative cohomology.
\end{remark}

\begin{example}
Consider a saddle-node bifurcation in the system described in Figure \ref{fig:saddlediagram}.
 We impose an attracting fixed point at the bottom of Figure \ref{fig:saddlediagram}, such that we may restrict phase space to a forward-invariant compact interval $X=[1,0]$. Call this parametrized dynamical system $\obphi\colon \R \to \sDS(\R^+,X)$. Lemma \ref{lemma:pitchacyclic} again shows that $\scrA^\obphi$ has vanishing higher cohomology. However, $H^0(\mathrm{\Lambda};\scrA^\obphi) \cong \Z_2^2$ which differs from the above example. The relative cohomology groups are the same as in the above example as is the case for the pitchfork bifurcation.
\end{example}

\subsubsection{The transcritical bifurcation}
Consider a parametrized dynamical system  on  $X = \R\cup\{-\infty,\infty\}$, the 2-point compactification of $\R$, experiencing a transcritical  bifurcation. 
The parametrized flow is defined via the differential equation
\[
\dot x = \lambda x - x^2, \quad x\in \R,~\lambda \in \R,
\]
and $+\infty$ and $-\infty$ are a repelling and attracting fixed points respectively.
As before we fix a parametrization:
$$
\obpsi\colon \mathrm{\Lambda} \to \sDS(\T,X), 
$$
where $\mathrm{\Lambda} = \R$ is parameter space, $\T = \R$ is the time space and $X$ is the 2-point compactification of $\R$.
Lemma \ref{lemma:pitchacyclic} again shows that the attractor sheaf $\scrA^\obpsi$ has vanishing higher order cohomology.
\begin{figure}[]
\label{fig:transcriticaldiagram}
    \centering

\tikzset{every picture/.style={line width=0.75pt}} 

\begin{tikzpicture}[x=0.75pt,y=0.75pt,yscale=-1.6,xscale=2]

\draw  [color={rgb, 255:red, 200; green, 200; blue, 200 }  ,draw opacity=1 ][fill={rgb, 255:red, 200; green, 200; blue, 200 }  ,fill opacity=1 ] (255.75,35.63) -- (297.01,35.63) -- (297.01,90.63) -- (255.75,90.63) -- cycle ;
\draw    (165.28,44.27) -- (165.28,75.27) ;
\draw [shift={(165.28,77.27)}, rotate = 270] [color={rgb, 255:red, 0; green, 0; blue, 0 }  ][line width=0.75]    (4.37,-1.32) .. controls (2.78,-0.56) and (1.32,-0.12) .. (0,0) .. controls (1.32,0.12) and (2.78,0.56) .. (4.37,1.32)   ;
\draw    (276.36,75.27) -- (276.36,85.27) ;
\draw [shift={(276.36,73.27)}, rotate = 90] [color={rgb, 255:red, 0; green, 0; blue, 0 }  ][line width=0.75]    (4.37,-1.32) .. controls (2.78,-0.56) and (1.32,-0.12) .. (0,0) .. controls (1.32,0.12) and (2.78,0.56) .. (4.37,1.32)   ;
\draw    (276.15,40.27) -- (276.15,50.27) ;
\draw [shift={(276.15,52.27)}, rotate = 270] [color={rgb, 255:red, 0; green, 0; blue, 0 }  ][line width=0.75]    (4.37,-1.32) .. controls (2.78,-0.56) and (1.32,-0.12) .. (0,0) .. controls (1.32,0.12) and (2.78,0.56) .. (4.37,1.32)   ;
\draw [color={rgb, 255:red, 155; green, 155; blue, 155 }  ,draw opacity=1 ][line width=1.5]  [dash pattern={on 5.63pt off 4.5pt}]  (140.03,24.47) -- (303.2,24.63) ;
\draw [color={rgb, 255:red, 155; green, 155; blue, 155 }  ,draw opacity=1 ][line width=1.5]    (140.03,165.47) -- (303.2,165.63) ;
\draw [color={rgb, 255:red, 155; green, 155; blue, 155 }  ,draw opacity=0.25 ]   (222.59,32.37) -- (222.75,157.63) ;
\draw [color={rgb, 255:red, 0; green, 0; blue, 0 }  ,draw opacity=1 ][line width=1.5]  [dash pattern={on 5.63pt off 4.5pt}]  (221.72,94.63) -- (304.04,94.37) ;
\draw [color={rgb, 255:red, 155; green, 155; blue, 155 }  ,draw opacity=0.35 ]   (255.75,99.63) -- (298.11,99.63) ;
\draw [line width=1.5]    (140.15,95) -- (221.72,94.63) ;
\draw [line width=1.5]    (222.75,94.63) -- (303.2,52.43) ;
\draw [line width=1.5]  [dash pattern={on 5.63pt off 4.5pt}]  (140.23,133.63) -- (220.68,95.63) ;
\draw    (164.99,102.63) -- (164.99,112.63) ;
\draw [shift={(164.99,100.63)}, rotate = 90] [color={rgb, 255:red, 0; green, 0; blue, 0 }  ][line width=0.75]    (4.37,-1.32) .. controls (2.78,-0.56) and (1.32,-0.12) .. (0,0) .. controls (1.32,0.12) and (2.78,0.56) .. (4.37,1.32)   ;
\draw    (276.36,116.27) -- (276.36,147.27) ;
\draw [shift={(276.36,149.27)}, rotate = 270] [color={rgb, 255:red, 0; green, 0; blue, 0 }  ][line width=0.75]    (4.37,-1.32) .. controls (2.78,-0.56) and (1.32,-0.12) .. (0,0) .. controls (1.32,0.12) and (2.78,0.56) .. (4.37,1.32)   ;
\draw    (164.99,135.63) -- (164.99,145.63) ;
\draw [shift={(164.99,147.63)}, rotate = 270] [color={rgb, 255:red, 0; green, 0; blue, 0 }  ][line width=0.75]    (4.37,-1.32) .. controls (2.78,-0.56) and (1.32,-0.12) .. (0,0) .. controls (1.32,0.12) and (2.78,0.56) .. (4.37,1.32)   ;

\draw (311,20.73) node [anchor=north west][inner sep=0.75pt]  [font=\large]  {$\infty $};
\draw (307,161.73) node [anchor=north west][inner sep=0.75pt]  [font=\large]  {$-\infty $};
\draw (311,91.73) node [anchor=north west][inner sep=0.75pt]  [font=\large]  {$0$};
\draw (140.42,79.03) node [anchor=north west][inner sep=0.75pt]  [font=\large]  {$\mathrm{\Lambda} $};
\draw (208.29,142.73) node [anchor=north west][inner sep=0.75pt]  [font=\large]  {$X$};
\draw (282.24,103.03) node [anchor=north west][inner sep=0.75pt]  [font=\large]  {$\mathrm{\Lambda} '$};
\draw (284.62,77.73) node [anchor=north west][inner sep=0.75pt]  [font=\large]  {$U$};

\end{tikzpicture}

\caption{A trans-critical bifurcation. The section on $\mathrm\Lambda'$ defined by $\sigma(\lambda) = \big(\lambda,\phi^\lambda, \omega_{\phi^\lambda}(U)\big)$ fails to extend globally.}
\end{figure}
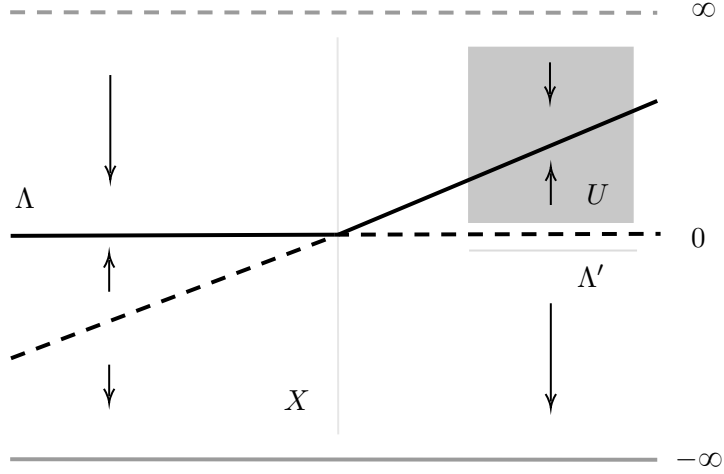
\begin{proposition}\label{prop:trans}
Let $\mathrm{\Lambda}' = [a,\infty)$. If $a>0$, then
$H^k(\mathrm{\Lambda}, \mathrm{\Lambda}'; \scrA^\obpsi)\cong \Z_2$ for $k=1$, and vanishes otherwise. When $a\leq 0$, then $H^k(\mathrm{\Lambda}, \mathrm{\Lambda}'; \scrA^\obpsi)=0$ for all $k$.
\end{proposition}

\proof
The global sections are $H^0(\mathrm{\Lambda};\scrA^\obpsi)\cong \mathrm{\Gamma}\bigl(\scrA^\obpsi\bigr) \cong \Z_2^3$.
For $a>0$, we have $H^0(\mathrm{\Lambda}';\scrA^\obpsi)\cong\mathrm{\Gamma}\big(\restr{\scrA^\obpsi}{\mathrm{\Lambda}'}\big)\cong \Z_2^4$. The injectivity of $i_*^0$ and
 Lemma \ref{lemma:pitchacyclic2}(i) yields $H^1(\mathrm{\Lambda},\mathrm{\Lambda}'; \scrA^\obpsi) \cong \Z_2$. 
For $a\le 0$, we have $H^0(\mathrm{\Lambda}';\scrA^\obpsi)\cong \mathrm{\Gamma}\big(\restr{\scrA^\obpsi}{\mathrm{\Lambda}'}\big)\cong \Z_2^3$, 
which implies $H^1(\mathrm{\Lambda},\mathrm{\Lambda}'; \scrA^\obpsi) =0$. As before the higher order relative cohomology groups vanish by  Lemma \ref{lemma:pitchacyclic2}(i). 
\eproof

\begin{proposition}
\label{prop:trans2}
Let $\mathrm{\Lambda}' = (-\infty,a]$. If $a\ge 0$, then 
$H^k(\mathrm{\Lambda}, \mathrm{\Lambda}'; \scrA^\obpsi)\cong 0$ for all $k$. When $a< 0$, then $H^1(\mathrm{\Lambda}, \mathrm{\Lambda}'; \scrA^\obpsi)\cong \Z_2$ and vanishes otherwise.
\end{proposition}

\proof
As before the global sections are $H^0(\mathrm{\Lambda};\scrA^\obpsi)\cong \mathrm{\Gamma}\bigl(\scrA^\obpsi\bigr) \cong \Z_2^3$.
For $a\ge 0$, we have $H^0(\mathrm{\Lambda}';\scrA^\obpsi)\cong\mathrm{\Gamma}\big(\restr{\scrA^\obpsi}{\mathrm{\Lambda}'}\big)\cong \Z_2^3$. The injectivity of $i_*^0$ and
 Lemma \ref{lemma:pitchacyclic2}(i) yields $H^1(\mathrm{\Lambda},\mathrm{\Lambda}'; \scrA^\obpsi) \cong 0$. 
For $a< 0$, we have $H^0(\mathrm{\Lambda}';\scrA^\obpsi)\cong \mathrm{\Gamma}\big(\restr{\scrA^\obpsi}{\mathrm{\Lambda}'}\big)\cong \Z_2^4$. The injectivity of $i_*^0$ and Lemma \ref{lemma:pitchacyclic2}(i)
then implies that $H^1(\mathrm{\Lambda},\mathrm{\Lambda}'; \scrA^\obpsi) \cong \Z_2$. The higher order relative cohomology groups vanish by  Lemma \ref{lemma:pitchacyclic2}(i). 
\eproof

\begin{theorem}
Let $\obphi$ be a parametrized dynamical system over $\mathrm{\Lambda}$ conjugate to the above canonical parametrization for the transcritical bifurcation. Then,
\[
\scrA^\obphi ~\text{~ is acyclic and~}~ H^0(\mathrm{\Lambda};\scrA^\obphi) \cong\Z_2^3.
\]
Moreover,
there exists a value $\lambda_0\in \R$ such that
$$
H^k(\mathrm{\Lambda}, \mathrm{\Lambda}';\scrA^\obphi) \cong \begin{cases} \Z_2 &\textnormal{if } k = 1\textnormal{ and } a>\lambda_0 \\
0 & k\neq 1, \textnormal{or~} a\le \lambda_0, \end{cases} \quad \text{with}\quad\mathrm{\Lambda}' = [a,\infty),
$$
$$
H^k(\mathrm{\Lambda}, \mathrm{\Lambda}';\scrA^\obphi) \cong \begin{cases} \Z_2 &\textnormal{if } k = 1\textnormal{ and } a< \lambda_0 \\
0 & k\neq 1, \textnormal{or~} a\ge \lambda_0, \end{cases}\quad\text{with}\quad \mathrm{\Lambda}' = (-\infty,a].
$$
\end{theorem}

\proof
Apply Theorem \ref{thm:CIT}, Lemma \ref{lemma:pitchacyclic} and Propositions \ref{prop:saddlerelative}
and \ref{prop:saddlerelative2}. 
\eproof

\begin{remark}
Note the subtle difference in the relative sheaf cohomology for the saddle-node and transcritical bifurcations. For the latter we only find relative cohomology at $k=1$ for different choices of $\mathrm{\Lambda}'$, as for the saddle-node we have cohomology at $k=0$ and $k=1$ for various choices of $\mathrm{\Lambda}'$.
\end{remark}

\subsection{One-parameter bifurcations at multiple parameter values}
\label{multiple}
In this subsection we consider a bifurcation that occur at multiple points.

\subsubsection{The S-shaped bifurcation}

Now we study the S-shaped bifurcation, as in Figure \ref{fig:sshaped}. 
Consider a parametrized dynamical system on  $X = \R\cup\{-\infty,\infty\}$, the 2-point compactification of $\R$, experiencing an S-shaped  bifurcation. The parametrized flow is defined via the differential equation
\[
\dot x = \lambda +  x - x^3, \quad x\in \R,~\lambda \in \R,
\]
and $+\infty$ and $-\infty$ are a repelling fixed points.
As before we fix a parametrization:
$$
\obpsi\colon \mathrm{\Lambda} \to \sDS(\T,X), 
$$
where $\mathrm{\Lambda} = \R$ is parameter space, $\T = \R$ is the time space and $X$ is the 2-point compactification of $\R$.
Here, there are two bifurcation points at $\lambda_1=-1$ and $\lambda_2=+1$. 
\tikzset{every picture/.style={line width=0.75pt}} 
\begin{figure}\label{fig:sshaped}
    \centering

\begin{tikzpicture}[x=0.75pt,y=0.75pt,yscale=-1.6,xscale=2]

\draw  [color={rgb, 255:red, 200; green, 200; blue, 200 }  ,draw opacity=1 ][fill={rgb, 255:red, 200; green, 200; blue, 200 }  ,fill opacity=1 ] (211.2,45.1) -- (295.01,45.1) -- (295.01,78.1) -- (211.2,78.1) -- cycle ;
\draw [color={rgb, 255:red, 155; green, 155; blue, 155 }  ,draw opacity=1 ][line width=1.5]  [dash pattern={on 5.63pt off 4.5pt}]  (140.03,24.47) -- (303.2,24.63) ;
\draw [color={rgb, 255:red, 155; green, 155; blue, 155 }  ,draw opacity=1 ][line width=1.5]  [dash pattern={on 5.63pt off 4.5pt}]  (140.03,165.47) -- (303.2,165.63) ;
\draw [color={rgb, 255:red, 155; green, 155; blue, 155 }  ,draw opacity=0.25 ]   (223.59,35.37) -- (223.75,154.63) ;
\draw [color={rgb, 255:red, 155; green, 155; blue, 155 }  ,draw opacity=0.35 ]   (212.75,97.63) -- (295.11,97.63) ;
\draw    (162.13,64.69) -- (162.13,97.52) ;
\draw [shift={(162.13,99.52)}, rotate = 270] [color={rgb, 255:red, 0; green, 0; blue, 0 }  ][line width=0.75]    (4.37,-1.32) .. controls (2.78,-0.56) and (1.32,-0.12) .. (0,0) .. controls (1.32,0.12) and (2.78,0.56) .. (4.37,1.32)   ;
\draw    (277.12,37.75) -- (277.12,48.41) ;
\draw [shift={(277.12,50.41)}, rotate = 270] [color={rgb, 255:red, 0; green, 0; blue, 0 }  ][line width=0.75]    (4.37,-1.32) .. controls (2.78,-0.56) and (1.32,-0.12) .. (0,0) .. controls (1.32,0.12) and (2.78,0.56) .. (4.37,1.32)   ;
\draw    (223.98,37.75) -- (223.98,48.41) ;
\draw [shift={(223.98,50.41)}, rotate = 270] [color={rgb, 255:red, 0; green, 0; blue, 0 }  ][line width=0.75]    (4.37,-1.32) .. controls (2.78,-0.56) and (1.32,-0.12) .. (0,0) .. controls (1.32,0.12) and (2.78,0.56) .. (4.37,1.32)   ;
\draw [line width=1.5]    (144.2,129.1) .. controls (197.2,128.1) and (246.2,119.1) .. (246.79,104.93) ;
\draw [line width=1.5]    (302.79,58.6) .. controls (255.2,58.1) and (200.2,66.1) .. (200.95,80.65) ;
\draw [line width=1.5]  [dash pattern={on 5.63pt off 4.5pt}]  (200.95,80.65) .. controls (201.99,103.87) and (246.79,79.6) .. (246.79,104.93) ;
\draw    (277.6,108.63) -- (277.6,141.46) ;
\draw [shift={(277.6,106.63)}, rotate = 90] [color={rgb, 255:red, 0; green, 0; blue, 0 }  ][line width=0.75]    (4.37,-1.32) .. controls (2.78,-0.56) and (1.32,-0.12) .. (0,0) .. controls (1.32,0.12) and (2.78,0.56) .. (4.37,1.32)   ;
\draw    (223.98,72.47) -- (223.98,83.13) ;
\draw [shift={(223.98,70.47)}, rotate = 90] [color={rgb, 255:red, 0; green, 0; blue, 0 }  ][line width=0.75]    (4.37,-1.32) .. controls (2.78,-0.56) and (1.32,-0.12) .. (0,0) .. controls (1.32,0.12) and (2.78,0.56) .. (4.37,1.32)   ;
\draw    (223.98,102.13) -- (223.98,112.79) ;
\draw [shift={(223.98,114.79)}, rotate = 270] [color={rgb, 255:red, 0; green, 0; blue, 0 }  ][line width=0.75]    (4.37,-1.32) .. controls (2.78,-0.56) and (1.32,-0.12) .. (0,0) .. controls (1.32,0.12) and (2.78,0.56) .. (4.37,1.32)   ;
\draw    (223.98,134.74) -- (223.98,145.4) ;
\draw [shift={(223.98,132.74)}, rotate = 90] [color={rgb, 255:red, 0; green, 0; blue, 0 }  ][line width=0.75]    (4.37,-1.32) .. controls (2.78,-0.56) and (1.32,-0.12) .. (0,0) .. controls (1.32,0.12) and (2.78,0.56) .. (4.37,1.32)   ;
\draw    (162.64,140.58) -- (162.64,151.24) ;
\draw [shift={(162.64,138.58)}, rotate = 90] [color={rgb, 255:red, 0; green, 0; blue, 0 }  ][line width=0.75]    (4.37,-1.32) .. controls (2.78,-0.56) and (1.32,-0.12) .. (0,0) .. controls (1.32,0.12) and (2.78,0.56) .. (4.37,1.32)   ;
\draw [color={rgb, 255:red, 155; green, 155; blue, 155 }  ,draw opacity=0.25 ]   (142.2,92.3) -- (305.04,92.03) ;

\draw (311,20.73) node [anchor=north west][inner sep=0.75pt]  [font=\large]  {$\infty $};
\draw (307,161.73) node [anchor=north west][inner sep=0.75pt]  [font=\large]  {$-\infty $};
\draw (311,88.73) node [anchor=north west][inner sep=0.75pt]  [font=\large]  {$0$};
\draw (141.42,77.03) node [anchor=north west][inner sep=0.75pt]  [font=\large]  {$\mathrm{\Lambda} $};
\draw (208.29,145.73) node [anchor=north west][inner sep=0.75pt]  [font=\large]  {$X$};
\draw (255.93,101.03) node [anchor=north west][inner sep=0.75pt]  [font=\large]  {$\mathrm{\Lambda} '$};
\draw (279.62,65.73) node [anchor=north west][inner sep=0.75pt]  [font=\large]  {$U$};

\end{tikzpicture}

\caption{An S-shaped bifurcation. The section on $\mathrm\Lambda'$ defined by $\sigma(\lambda) = \big(\lambda,\phi^\lambda, \omega_{\phi^\lambda}(U)\big)$ fails to extend globally.}
\end{figure}
\begin{proposition}\label{prop:sacyclic}
$\scrA^\obpsi$ is acyclic. 
\end{proposition}

\proof
Pick ${\lambda}_1 < a < b <{\lambda}_2 $, such that $\mathrm{\Lambda}_1 := (-\infty, b] $ and $\mathrm{\Lambda}_2 := [a,\infty)$ cover $\R$. Consider the Mayer-Vietoris exact sequence:
\[
\begin{tikzcd}[column sep=small]
 0  \arrow{r}{\delta^0}  & 
 \mathrm{\Gamma}(\scrA^\obpsi) \arrow{r}{\alpha^0_*}  &
 \mathrm{\Gamma}\big(\restr{\scrA^\obpsi}{\mathrm{\Lambda}_1}\big)\oplus\mathrm{\Gamma}\big(\restr{\scrA^\obpsi}{\mathrm{\Lambda}_2}\big)  \arrow{r}{\beta_*^0} \ar[draw=none]{d}[name=X, anchor=center]{}  &
 \mathrm{\Gamma}\big(\restr{\scrA^\obpsi}{[a,b]}\big) \ar[rounded corners,
            to path={ -- ([xshift=2ex]\tikztostart.east)
                      |- (X.center)  \tikztonodes
                      -| ([xshift=-2ex]\tikztotarget.west)
                      -- (\tikztotarget)}]{dll}[at end]{\delta^1} 
                      \\ \phantom{t}& H^1(\mathrm{\Lambda};\scrA^\obpsi) \arrow{r}{\alpha_*^1} & 
 H^1\big(\mathrm{\Lambda}_1; \restr{\scrA^\obpsi}{\mathrm{\Lambda}_1}\big)\oplus H^1\big(\mathrm{\Lambda}_2; \restr{\scrA^\obpsi}{\mathrm{\Lambda}_2}\big) \arrow{r}{\beta_*^1} & 
 H^1\big([a,b]; \restr{\scrA^\obpsi}{[a,b]}\big) \arrow{r} &
 0,
\end{tikzcd}
\]
since $H^2(\R;\scrA^\obpsi)\cong 0$, which uses the fact that intervals have covering dimension 1, cf.\ \cite[Lemma 2.7.3 and Proposition 3.2.2]{Schapira}. 
We can compute the global sections:
\[
\mathrm{\Gamma}\big(\scrA^\obpsi\big) \cong \Z_2^3, \quad \mathrm{\Gamma}\big(\restr{\scrA^\obpsi}{\mathrm{\Lambda}_1}\big)\cong \mathrm{\Gamma}\big(\restr{\scrA^\obpsi}{\mathrm{\Lambda}_2}\big) \cong \Z_2^4, \quad \quad  \mathrm{\Gamma}\big(\restr{\scrA^\obpsi}{[a,b]}\big)\cong \Z_2^5.
\]
Since $\im \delta^0 \cong 0$ and $\ker \delta^0 \cong 0$ we have $\ker\alpha_*^0 = \im \delta^0 \cong 0$. Consequently, $\im \alpha_*^0 \cong \Z_2^3$. Similarly, $\ker\beta_*^0 = \im \alpha_*^0\cong \Z_2^3$ which implies that $\im \beta_*^0 \cong \Z_2^5$. Furthermore, $\ker \delta^1 = \im \beta_*^0\cong \Z_2^5$ and thus $\im \delta^1 \cong 0$.
Since $\mathrm{\Lambda}_1, \mathrm{\Lambda}_2$ both contain only one bifurcation point, we can apply Lemma \ref{lemma:pitchacyclic} to conclude that $ H^1\big(\mathrm{\Lambda}_1; \restr{\scrA^\obpsi}{\mathrm{\Lambda}_1}\big)$ and $ H^1\big(\mathrm{\Lambda}_2; \restr{\scrA^\obpsi}{\mathrm{\Lambda}_2}\big)$ vanish for all $k\ge 1$. Hence,
$ \im \delta^1 =\ker \alpha_*^1 = H^1(\mathrm{\Lambda};\scrA^\obpsi)$, which proves that $H^1(\R, \scrA^\obpsi)$ is zero. The remaining sheaf cohomology vanishes due to the dimension restriction on $\mathrm{\Lambda}$.
\eproof

The S-shaped bifurcation is an example where $\scrA^\obpsi$ and  $\gAtt^\obpsi$, the attractor sheaf and free attractor sheaf respectively, have differing cohomologies.

\begin{proposition}
\label{freeattcoho}
Let $\gAtt^\obpsi$ be the free attractor sheaf associated to $\obpsi$.
$$
H^k(\mathrm{\Lambda};\gAtt^\obpsi) \cong \begin{cases} \Z_2^5 &\textnormal{if } k = 0 \\
\Z_2^4 & \textnormal{if } k=1 \\
0 & \textnormal{if } k \geq 2 \end{cases}
$$
\end{proposition}

\proof
Let $\mathrm{\Lambda}_1 = (-\infty,{\lambda}_2)$ and $\mathrm{\Lambda}_2 =({\lambda}_1,\infty)$ be an open covering for $\mathrm{\Lambda}=\R$. We build the \emph{ordered \Cech complex} from this cover:
\[
\begin{tikzcd}[column sep=small]
 0  \arrow{r}{}  & 
 \overline{C}^0(\{\mathrm{\Lambda}_1, \mathrm{\Lambda}_2\}; \gAtt^\obpsi)\arrow{r}{\delta^0}  &
 \overline{C}^1(\{\mathrm{\Lambda}_1, \mathrm{\Lambda}_2\}; \gAtt^\obpsi)\arrow{r}{\delta^1}  &
 \overline{C}^2(\{\mathrm{\Lambda}_1, \mathrm{\Lambda}_2\}; \gAtt^\obpsi)\arrow{r}{\delta^2}  &
 \dots
\end{tikzcd},
\]
which in our case is:
\[
\begin{tikzcd}[column sep=large]
 0  \arrow{r}{}  & 
 \gAtt^\obpsi(\mathrm{\Lambda}_1) \oplus \gAtt^\obpsi(\mathrm{\Lambda}_2)\arrow{r}{\rho^2_{1,2} - \rho^1_{1,2}}  &
 \gAtt^\obpsi(\mathrm{\Lambda}_1\cap \mathrm{\Lambda}_2)\arrow{r}  &
0,
\end{tikzcd}
\]
where $\rho^2_{1,2}$ denotes the restriction map from $\gAtt^\obpsi(\mathrm{\Lambda}_1)$ to $\gAtt^\obpsi(\mathrm{\Lambda}_1\cap \mathrm{\Lambda}_2)$, and $\rho^2_{1,2}$ from $\gAtt^\obpsi(\mathrm{\Lambda}_2)$. We get cohomology groups from the above chain complex:
$$\check{H}^0(\{\mathrm{\Lambda}_1, \mathrm{\Lambda}_2\}; \gAtt^\obpsi) = \ker \delta_1 \cong \Z_2^5, \quad\quad  \check{H}^1(\{\mathrm{\Lambda}_1, \mathrm{\Lambda}_2\}; \gAtt^\obpsi) = \ker \delta_2 / \Img \delta_1 \cong \Z_2^4,$$
$$\check{H}^k(\{\mathrm{\Lambda}_1, \mathrm{\Lambda}_2\}; \gAtt^\obpsi) = 0 \text{ for } k>1.$$
Since $\mathrm{\Lambda}_1\cap \mathrm{\Lambda}_2$ contains no bifurcation points, $\gAtt^\obpsi$ is locally constant on $\mathrm{\Lambda}_1\cap \mathrm{\Lambda}_2$ and  therefore acyclic on $\mathrm{\Lambda}_1\cap \mathrm{\Lambda}_2$. We now use Leray's Theorem to determine the sheaf cohomology of $\gAtt^\obpsi$ from the above \Cech cohomology groups, 
which yields the desired result.  
\eproof

The result of Proposition \ref{freeattcoho} is an example where Theorem \ref{thm:nonzerobifur} applies. The sheaf cohomology of $\gAtt^\obpsi$ picks up bifurcations. For the sheaf cohomology of
$\scrA^\obpsi$  Theorem \ref{thm:nonzerobifur} does not apply.

\begin{proposition}\label{prop:sshapedrelative}
Let $\mathrm{\Lambda}' = [a,\infty)$.
If $a\in ({\lambda}_1,{\lambda}_2]$, then $H^k(\mathrm{\Lambda}, \mathrm{\Lambda}'; \scrA^\obpsi)\cong \Z_2$ for $k=1$, and vanishes otherwise. When $a\notin({\lambda}_1,{\lambda}_2]$, then $H^k(\mathrm{\Lambda}, \mathrm{\Lambda}'; \scrA^\obpsi)=0$ for all $k$. 
\end{proposition}

\proof
We achieve a truncated long exact sequence from Proposition \ref{prop:sacyclic} and Proposition \ref{prop:stableconstant}:
\[
\begin{tikzcd}[column sep=small]
 0  \arrow{r}{}  & 
 H^0(\mathrm{\Lambda},\mathrm{\Lambda}';\scrA^\obpsi) \arrow{r}{j_*^0}  &
 H^0(\mathrm{\Lambda};\scrA^\obpsi) \arrow{r}{i_*^0}  &
 H^0(\mathrm{\Lambda}';\scrA^\obpsi) \arrow{r}{\delta^1}  &
 H^1(\mathrm{\Lambda},\mathrm{\Lambda}'; \scrA^\obpsi) \arrow{r}  &  
 0.
\end{tikzcd}
\]
The map $i_*^0$ is injective and thus $H^0(\mathrm{\Lambda},\mathrm{\Lambda}';\scrA^\obpsi)\cong 0$. 
Lemma \ref{lemma:pitchacyclic2} then yields:
$$
H^1(\mathrm{\Lambda},\mathrm{\Lambda}'; \scrA^\obpsi) \cong \frac{H^0(\mathrm{\Lambda}';\scrA^\obpsi)}{\im i_*^0}
$$
Note that $\im i_*^0\cong H^0(\mathrm{\Lambda};\scrA^\obpsi) = \mathrm{\Gamma}\bigl(\scrA^\obpsi\bigr)\cong \Z_2^3$. For $b\in ({\lambda}_1,{\lambda}_2]$, we have $H^0(\mathrm{\Lambda}';\scrA^\obpsi) = \mathrm{\Gamma}\big(\restr{\scrA^\obpsi}{\mathrm{\Lambda}'}\big)\cong \Z_2^4$, which implies $H^1(\mathrm{\Lambda},\mathrm{\Lambda}'; \scrA^\obpsi) \cong \Z_2$. Otherwise, $H^1(\mathrm{\Lambda},\mathrm{\Lambda}'; \scrA^\obpsi) =0$. 
\eproof

\begin{proposition}\label{prop:sshapedrelative2}
Let $\mathrm{\Lambda}' = (-\infty,a]$.
If $a\in [{\lambda}_1,{\lambda}_2)$, then $H^k(\mathrm{\Lambda},\mathrm{\Lambda}',\scrA^\obpsi)\cong \Z_2$ for $k=1$, and is zero otherwise. When $a\notin [{\lambda}_1,{\lambda}_2)$, then $H^k(\mathrm{\Lambda}, \mathrm{\Lambda}'; \scrA^\obpsi)=0$ for all $k$.
\end{proposition}

\proof
An identical argument as in the proof of Proposition \ref{prop:sshapedrelative}.
\eproof

\begin{theorem}\label{thm:abelianhysteresis}
Let $\obphi$ be a parametrized dynamical system conjugate to the above parametrization $\obpsi$ of the S-shaped bifurcation. Then, 
\[
\scrA^\obphi ~\text{~ is acyclic and~}~ H^0(\mathrm{\Lambda};\scrA^\obphi) \cong\Z_2^3.
\]
Moreover,
there exists $\lambda_1, \lambda_2\in \R$ such that
$$
H^k(\mathrm{\Lambda}, \mathrm{\Lambda}';\scrA^\obphi) \cong \begin{cases} \Z_2 &\textnormal{if } k = 1\textnormal{ and } a\in ({\lambda}_1,{\lambda}_2] \\
0 & \textnormal{otherwise.}  \end{cases} \quad \text{with}\quad\mathrm{\Lambda}' = [a,\infty),
$$
$$
H^k(\mathrm{\Lambda}, \mathrm{\Lambda}';\scrA^\obphi) \cong \begin{cases} \Z_2 &\textnormal{if } k = 1\textnormal{ and } a\in [{\lambda}_1,{\lambda}_2)\\
0 &\textnormal{otherwise.} \end{cases}\quad\text{with}\quad \mathrm{\Lambda}' = (-\infty,a].
$$
\end{theorem}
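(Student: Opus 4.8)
The final theorem is the abstract, conjugacy-invariant version of the S-shaped bifurcation computation, obtained by combining the concrete computations (Propositions \ref{prop:sshapedrelative} and \ref{prop:sshapedrelative2}) for the canonical parametrization $\obxi$ with the Conjugacy Invariance Theorem \ref{thm:CIT}. The strategy mirrors the proofs of Theorem \ref{thm:pitchchm} and Theorem \ref{saddlenode3} exactly: one transfers the already-computed sheaf cohomology of $\scrA^\obxi$ to $\scrA^\obphi$ via the conjugacy-induced sheaf isomorphism, and then reads off the acyclicity, global sections, and relative groups verbatim.

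First I would invoke Theorem \ref{thm:CIT}: since $\obphi$ is conjugate to $\obxi$, the \'etale spaces $\iobphi\Pi[\sAtt]$ and $\ \overline{\bm{\xi_{\bm *}}}^{-1}\Pi[\sAtt]$ are homeomorphic, hence the attractor sheaves $\scrA^\obphi$ and $\scrA^\obxi$ are isomorphic as sheaves of rings over $\Lambda$. Isomorphic sheaves have isomorphic (absolute and relative) sheaf cohomology, and the isomorphism is natural with respect to inclusions $\Lambda' \hookrightarrow \Lambda$, so it carries the long exact sequences of the pair over as well. This reduces the theorem to the statements already proven for $\obxi$.

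Next I would record the two absolute facts. Acyclicity of $\scrA^\obxi$ is Proposition \ref{prop:sacyclic}, and the global sections $H^0(\Lambda;\scrA^\obxi)\cong \Gamma(\scrA^\obxi)\cong \Z_2^3$ were computed there as well; transporting along the sheaf isomorphism gives $\scrA^\obphi$ acyclic with $H^0(\Lambda;\scrA^\obphi)\cong \Z_2^3$. The two bifurcation values $\lambda_1,\lambda_2\in\R$ for $\obphi$ are simply the images under the conjugacy of the bifurcation values $-1,+1$ of $\obxi$; stability is a conjugacy invariant by the definition of bifurcation point, so the intervals $({\lambda}_1,{\lambda}_2]$ and $[{\lambda}_1,{\lambda}_2)$ appearing in the relative statements correspond correctly.

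Finally, for the relative groups I would apply the sheaf isomorphism to the results of Propositions \ref{prop:sshapedrelative} and \ref{prop:sshapedrelative2}, which give precisely $H^k(\Lambda,\Lambda';\scrA^\obxi)\cong \Z_2$ for $k=1$ when $a$ lies in the appropriate half-open interval (and $0$ otherwise) for each of the two choices $\Lambda'=[a,\infty)$ and $\Lambda'=(-\infty,a]$. Since the isomorphism is compatible with the inclusion-induced maps, these relative groups transport identically to $\scrA^\obphi$, yielding the stated case distinctions. I expect no serious obstacle: the only point requiring care is the naturality of the Theorem \ref{thm:CIT} homeomorphism with respect to restriction to sub-parameter-spaces $\Lambda'$, so that relative cohomology (not merely absolute) is preserved; this follows because the homeomorphism $h_*$ is defined fiberwise over $\Lambda$ and hence restricts to the homeomorphism of pullback \'etale spaces over any $\Lambda'\subset\Lambda$. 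With that observation the proof is a one-line citation, exactly as in the analogous theorems.
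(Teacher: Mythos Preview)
Your proposal is correct and follows exactly the paper's approach: the paper's proof is the one-line citation ``Apply Theorem \ref{thm:CIT}, Proposition \ref{prop:sacyclic}, and Propositions \ref{prop:sshapedrelative} and \ref{prop:sshapedrelative2},'' and you have simply unpacked what that citation entails, including the helpful observation that the fiberwise homeomorphism $h_*$ restricts over any $\Lambda'\subset\Lambda$ so that relative cohomology is preserved. One minor remark: the conjugacy in Definition \ref{paradyn2} is fiberwise over the \emph{same} parameter space $\Lambda$ (it does not reparametrize $\Lambda$), so the bifurcation values $\lambda_1,\lambda_2$ for $\obphi$ are literally the same parameter values as for $\obxi$, not ``images under the conjugacy''; but this does not affect the argument.
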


\proof
Apply Theorem \ref{thm:CIT}, Proposition \ref{prop:sacyclic}, and Propositions \ref{prop:sshapedrelative} and \ref{prop:sshapedrelative2}. 
\eproof

\begin{remark}
Note that the relative cohomologies $H^k(\mathrm{\Lambda},\mathrm{\Lambda}';\scrA^\obphi)$ are the same for the trans-critical and S-shaped bifurcation.
\end{remark}

\begin{remark}\label{rem:comparehysteresis}
If we consider the S-shaped bifurcation on an interval $X=I=[-c,c]$, $c\gg 1$, with time space $\T=\R^+$ and parameter space $\mathrm{\Lambda} = [-\lambda_0,\lambda_0]$, with $\lambda_0 = -c+c^3-\epsilon$, $0<\epsilon\ll 1$ we obtain the following sheaf cohomology:
\[
\scrA^\obphi ~\text{~ is acyclic and~}~ H^0(\mathrm{\Lambda};\scrA^\obphi) \cong\Z_2.
\]
Moreover,
there exists a value $\lambda_0\in \R$ such that
$$
H^k(\mathrm{\Lambda}, \mathrm{\Lambda}';\scrA^\obphi) \cong \begin{cases} \Z_2 &\textnormal{if } k = 1\textnormal{ and } a\in ({\lambda}_1,{\lambda}_2] \\
0 & \textnormal{otherwise.}  \end{cases} \quad \text{with}\quad\mathrm{\Lambda}' = [a,\infty)\cap \mathrm{\Lambda},
$$
$$
H^k(\mathrm{\Lambda}, \mathrm{\Lambda}';\scrA^\obphi) \cong \begin{cases} \Z_2 &\textnormal{if } k = 1\textnormal{ and } a\in [{\lambda}_1,{\lambda}_2)\\
0 &\textnormal{otherwise.} \end{cases}\quad\text{with}\quad \mathrm{\Lambda}' = (-\infty,a]\cap \mathrm{\Lambda}.
$$
For free attractor sheaf we have:
$$
H^k(\mathrm{\Lambda};\gAtt^\obpsi) \cong \begin{cases} \Z_2^2 &\textnormal{if } k = 0 \\
\Z_2 & \textnormal{if } k=1 \\
0 & \textnormal{if } k \geq 2 \end{cases},
$$
which is clearly not acyclic. 
\end{remark}

\subsection{Comparing the attractor and free attractor sheaves}\label{ssc:comparecohomology}
In the above treatment of the pitchfork, the saddle-node and transcritical bifurcations we have only used the attractor sheaf. 
We can reexamine the saddle-node bifurcation as studied in 
Propositions \ref{prop:saddlerelative} and \ref{prop:saddlerelative2} with the free attractor sheaf.
Let 
$$
\obpsi\colon \mathrm{\Lambda} \to \sDS(\T,X), 
$$
be the parametrized system for the saddle-node bifurcation as given in \ref{sn1}, 
where $\mathrm{\Lambda} = \R$ is parameter space, $\T = \R$ is the time space and $X$ is the 2-point compactification of $\R$.

For the free attractor sheaf we have:
\[
\gAtt^\obpsi ~\text{~ is acyclic and ~}~ H^0(\mathrm{\Lambda};\gAtt^\obpsi) \cong\Z_2^4.
\]
Moreover,
$$
H^k(\mathrm{\Lambda}, \mathrm{\Lambda}';\gAtt^\obpsi) \cong \begin{cases} \Z_2^4 &\textnormal{if } k = 1\textnormal{ and } a>0 \\
0 & k\neq 1, \textnormal{or~} a\le 0, \end{cases} \quad \text{with}\quad\mathrm{\Lambda}' = [a,\infty),
$$
$$
H^k(\mathrm{\Lambda}, \mathrm{\Lambda}';\gAtt^\obpsi) \cong \begin{cases} \Z_2 &\textnormal{if } k = 0\textnormal{ and } a< 0 \\
0 & k\neq 0, \textnormal{or~} a\ge 0, \end{cases}\quad\text{with}\quad \mathrm{\Lambda}' = (-\infty,a].
$$

The abelian attractor sheaf, as shown in Theorem \ref{thm:abelianhysteresis}, is acyclic for the S-shaped bifurcation. Proposition \ref{freeattcoho} demonstrates nontrivial trivial cohomology in dimension one for the free attractor sheaf. Consider the continuation of the union of both attracting fixed points, a section of both the free and abelian attractor sheaves on the interval between both two bifurcation points. Recall the ordered \Cech complex on the open cover $\mathrm{\Lambda}_1 = (-\infty,{\lambda}_2)$ and $\mathrm{\Lambda}_2 =({\lambda}_1,\infty)$: 
\[
\begin{tikzcd}[column sep=large]
 0  \arrow{r}{}  & 
 \gAtt^\obpsi(\mathrm{\Lambda}_1) \oplus \gAtt^\obpsi(\mathrm{\Lambda}_2)\arrow{r}{\rho^2_{1,2} - \rho^1_{1,2}}  &
 \gAtt^\obpsi(\mathrm{\Lambda}_1\cap \mathrm{\Lambda}_2)\arrow{r}  &
0,
\end{tikzcd}
\]
For both sheaves, this is not the restriction of any section from $\mathrm{\Lambda}_1$ or $\mathrm{\Lambda}_2$. However, the symmetric Conley form lets us write this section as the sum of the sections corresponding to each attracting fixed point. Thus, for the abelian attractor sheaf, this gives a trivial cohomology class. For the free attractor sheaf the addition operation is formal, so we cannot write the union of the two attracting fixed points as a sum of sections from $\mathrm{\Lambda}_1$ and $\mathrm{\Lambda}_2$.


\bibliographystyle{spmpsci}      
\bibliography{KMVc-biblist}   
\vfill\eject




\appendix

\section{Table with important definitions}

\centering
\begin{tabular}{ llll  }
 \hline
  & Notation & Description& Reference\\
 \hline

\multirow{9}{*}{\shortstack{Dynamics}} 
& $\T$ & Time space, either $\Z, \Z_+, \R, $ or $\R_+$&   Sect \ref{sec:DSCat} \\
& $\Inv_\phi(U)$  & Maximal invariant set in $U$ & Sect \ref{sec:AttFun}, pg \pageref{pg:inv}\\
& $\omega_\phi(U)$ & Omega limit set of $U$&  Sect \ref{sec:AttFun}, pg \pageref{pg:omegalimit} \\
& $\alpha_\phi(U)$ & Alpha limit set of $U$&  Rmk \ref{remark:alpha}\\
& $\sANbhd(\phi)$ & Lattice of attracting neighborhoods for $\phi$&  Sect \ref{sec:AttFun}, pg \pageref{pg:anbhd} \\
& $\sAtt(\phi)$ & Lattice of attractors for $\phi$&  Sect \ref{sec:AttFun}, pg \pageref{pg:att} \\
& $\sRep(\phi)$ & Lattice of repellers for $\phi$&  Rmk \ref{remark:alpha} \\
& $\sMorse(\phi)$ & Meet semilattice of Morse sets for $\phi$&  Sect \ref{sec:AttFun}, pg \pageref{pg:att} \\
& $\sMRepr(\phi)$ & Lattice of Morse representations for $\phi$&  Sect \ref{contatt12}, pg \pageref{pg:mrepr} \\
& $\sC_\sAtt(A, A')$ & Conley form of two attractors&  Sect \ref{sec:algconstr}, pg \pageref{pg:conleyform}\\

\hline
 
\multirow{9}{*}{\shortstack{Category\\Theory}} & $\sDS(\T,X)$ & Category of dynamical systems&   Sect \ref{sec:DSCat}, pg \pageref{pg:dscat} \\
& $\sob(\cC),\hom(\cC) $ & Objects and morphisms of a category $\cC$& \cite{MacLane}  \\
& $\hom(\phi, \psi)$  & Morphisms between two objects & \cite{MacLane}\\
& $\sF\colon \cD \to \cC$  & Universe functor & Sect \ref{Cstruc}, pg \pageref{pg:universefunctor} \\
& $\sF_0 \in \cC$ & Value of universe functor&  Sect \ref{Cstruc}, pg \pageref{pg:universefunctor} \\
& $\mathrm{\Pi}[\sE]$ & Category of elements for a functor $\sE$ & \cite{MacLane,MacLaneMoerdijk} \\
& $\mathrm{\Phi}[\sE; U]$ & Objects $\phi$ for which $U\in \sE(\phi)$ & Sect \ref{Cstruc}, pg \pageref{pg:phiopen} \\
& $\mathrm{\Theta}[\sw; U]$ & Partial section functor on $\mathrm{\Phi}[\sE; U]$ & Sect \ref{Cstruc}, pg \pageref{pg:partialsec} \\
& $(\sG, \sE, \sw)$ & Continuation frame & Def \ref{Cframe} \\
\hline

\multirow{7}{*}{\shortstack{Order\\Theory}} 
& $\subF\colon \sLat \to \sLat$ & Lattice of finite sublattices functor & Sect \ref{Morserepcont}, pg \pageref{pg:subf} \\
& $\pred U$  & Unique immediate predecessor of $U$ & \cite{KMV-1c} \\
& $\sO\colon \sPoset \to \sBDLat$ & Down-set functor&  \cite{KMV-1c} \\
& $\sJ\colon \sLat \to \sPoset$ & Poset of join-irreducibles of $\sU$ & \cite{KMV-1c}  \\
& $\sB\colon \sBDLat\to \sBool$ & Booleanization functor & \cite{KMV-1c} \\
& $\sR\colon \sBDLat\to \sRing$ & (Boolean) lattice ring of $\sL$ & Sect \ref{ssc:alg-att}, pg \pageref{pg:latticering} \\
& $\Z_2\colon \sBDLat \to \sRing$ & Lattice algebra of $\sL$ & Sect \ref{ssc:alg-att}, pg \pageref{pg:latticealg} \\
\hline

\multirow{8}{*}{\shortstack{Sheaf\\Theory}} 
& $\scrS^\sG\colon \cO(\cD) \to \sSetCat$ & Sheaf of sections for $\mathrm{\Pi}[\sG]$ & Def \ref{defn:sheafofsec} \\
& $\mathrm{\Gamma}(\scrS^\sG)$  & Set of global sections for $\scrS^\sG$ & Def \ref{defn:sheafofsec} \\
& $\scrF_\phi$  & Stalk of a sheaf $\scrF$ at $\phi$ & Rmk \ref{stalks}, \cite{Bredon} \\
& $\scrS^\sAtt$ & Attractor lattice sheaf & Sect \ref{ssc:attsheaves}, pg \pageref{pg:attlatsheaf} \\
& $\scrA^\obphi$ & Attractor sheaf for $\obphi$ & Sect \ref{sec:bifs}, pg \pageref{pg:attsheaf}   \\
& $\gAtt^\obphi$ & Free attractor sheaf for $\obphi$ & Sect \ref{sec:bifs}, pg \pageref{pg:attsheaf}   \\
& $H^*(\mathrm\Lambda; \scrF)$  & Sheaf cohomology of a sheaf $\scrF$ on $\mathrm\Lambda$ & \cite{Bredon} \\
& $H^*(\mathrm\Lambda, \mathrm\Lambda'; \scrF)$  & Relative sheaf cohomology & \cite{Bredon} \\
\hline

\end{tabular}

\section{Functorial properties of attractors}\label{sec:app1}

\proof[Lemma~\ref{lem:finv}]
For $t\ge 0$, we have $\phi_t(h^{-1}(U))\subset (h^{-1}\circ h\circ \phi_t)(h^{-1}(U))$. Since $h$ is a quasiconjugacy, we have $(h^{-1}\circ h\circ \phi_t)(h^{-1}(U)) = h^{-1}(\psi^\dagger_{t}((h\circ h^{-1})(U)))\subset h^{-1}(\psi^\dagger_{t}(U))$ and thus
\[
\phi_t(h^{-1}(U))\subset h^{-1}(\psi^\dagger_{t}(U)),\quad \forall t\ge 0.
\]
The inequality for $\omega$ now follows from elementary properties of inverse images and closures:
\begin{equation*}
\begin{aligned}
    \omega_\phi(h^{-1}(U)) & = \bigcap_{t\ge 0} \cl \bigcup_{s\ge t} \phi_s\bigl(h^{-1}(U)\bigr) 
    \subset \bigcap_{t\ge 0} \cl\bigcup_{s\ge t}h^{-1}\bigl(\psi^\dagger_{s}(U)\bigr) = \bigcap_{t\ge 0} \cl h^{-1}\Big(\bigcup_{s\ge t}\psi^\dagger_{s}(U)\Big)
    \\
    & \subset \bigcap_{t\ge 0} h^{-1}\Big(\cl\bigcup_{s\ge t}\psi^\dagger_{s}(U)\Big) = h^{-1}\bigg( \bigcap_{t\ge 0} \cl\bigcup_{s\ge t} \psi^\dagger_{s}(U) \bigg) \\
    &= h^{-1}\bigg( \bigcap_{t\ge 0} \cl \bigcup_{x\in U} \bigcup_{\sigma \ge \tau(t,x)} \psi_{\sigma}(x) \bigg)= 
    h^{-1}\bigg( \bigcap_{\tau\ge 0} \cl \bigcup_{\sigma \ge \tau} \psi_{\sigma}(U) \bigg) =
    h^{-1}(\omega_\psi(U)),
\end{aligned}
\end{equation*}
which uses the invertibility of the parametrization function $\tau$.
Finally applying $\omega_\phi$ we obtain
$$
\omega_\phi(h^{-1}(U))=\omega_\phi(\omega_\phi(h^{-1}(U)))\subset\omega_\phi(h^{-1}(\omega_\psi(U)))\subset\omega_\phi(h^{-1}(U))
$$
so that 
\begin{equation}
\label{pullbackatt}
\omega_\phi(h^{-1}(U))=\omega_\phi(h^{-1}(\omega_\psi(U))),
\end{equation}
which completes the proof.
\eproof

\proof[Remark~\ref{remark:alpha}]
To deal with negative times we define $\tau(-t,x):= \tau(t,x)$ in which case 
\[
\psi^\dagger_{-t} = \psi\bigl(\tau(-t,\cdot),\cdot\bigr) = \psi\bigl(-\tau(t,\cdot),\cdot\bigr)
= \bigl( \psi^\dagger_t\bigr)^{-1}.
\]
Let $x\in \phi_{-t}(h^{-1}(U))$ so that  $\phi_t(x) \in h^{-1}(U)$. Then, by the quasiconjugacy condition $h(\phi_t(x)) = \psi^\dagger_t(h(x)) \in U$, and therefore $h(x)\in \psi^\dagger_{-t}(U)$. This yields $x\in h^{-1}(\psi^\dagger_{-t}(U))$. 
Summarizing we have 
\[
\phi_{-t}(h^{-1}(U))\subset h^{-1}(\psi^\dagger_{-t}(U)),\quad \forall t\ge 0.
\]
The remainder of the proof is similar to the proof of Lemma \ref{lem:finv}.
\eproof

\proof[Proposition~\ref{prop:finvAtt}]
Since $A$ is an attractor for $\psi$, there exists an attracting neighborhood $U$ such that $\omega_\psi(U)=A$. 
By Eqn.\ \eqref{pullbackatt} we have
\[
\omega_\phi\bigl(h^{-1}(U)\bigr) = \omega_\phi\bigl(h^{-1}(\omega_\psi(U))\bigr) = \omega_\phi\bigl(h^{-1}(A)\bigr),
\]
which proves that $\omega_\phi\bigl(h^{-1}(A)\bigr)$ is an attractor for $\phi$, since we already know $h^{-1}(U)$ is an attracting neighborhood for $\phi$.

Therefore, for a quasiconjugacy $\tau\times h\in \Hom(\phi,\psi)$, the map $\omega_\phi \circ h^{-1}\colon \sAtt(\psi) \to \sAtt(\phi)$ is  well defined. 
It remains to show that the latter is a lattice homomorphism. Preservation of joins is clear, cf.\ Property (v) for omega-limit sets. Let $A, A'\in \sAtt(\psi)$, then
\begin{equation*}
\begin{aligned}
\omega_\phi(h^{-1}(A\wedge A'))) & = \omega_\phi(h^{-1}(\omega_\psi(A\cap A'))) \subset \omega_\phi(h^{-1}(A \cap A')) = \omega_\phi\bigl(h^{-1}(A) \cap h^{-1}(A')\bigr)
\\ 
& = \omega_\phi\big(\omega_\phi\bigl(h^{-1}(A) \cap h^{-1}(A')\bigr)\big) \subset \omega_\phi\big(\omega_\phi(h^{-1}(A)) \cap \omega_\phi(h^{-1}(A'))\big)
\\
& = \omega_\phi(h^{-1}(A)) \wedge \omega_\phi(h^{-1}(A'))
\end{aligned}
\end{equation*}
Idempotency of $\omega_\phi$  and Equation~\eqref{pullbackatt2} imply 
\begin{equation*}
\begin{aligned}
\omega_\phi(h^{-1}(A)) \wedge \omega_\phi(h^{-1}(A')) & = \omega_\phi\big(\omega_\phi(h^{-1}(A)) \cap \omega_\phi(h^{-1}(A'))\big) \\
&\subset \omega_\phi\big(h^{-1}(\omega_\psi(A)) \cap h^{-1}(\omega_\psi(A'))\big)
 = \omega_\phi\big(h^{-1}(\omega_\psi(A) \cap \omega_\psi(A'))\big)\\
 &= \omega_\phi(h^{-1}(A) \cap h^{-1}(A')) 
  = \omega_\phi(h^{-1}(A\cap A'))\\
  &= \omega_\phi\big( \omega_\phi(h^{-1}(A\cap A')) \big)  
\subset \omega_\phi(h^{-1}(\omega_\psi(A\cap A')))\\
&= \omega_\phi(h^{-1}(A\wedge A'))),
\end{aligned}
\end{equation*}
which proves that
\[
\omega_\phi\big(h^{-1}(A\wedge A'))\big) = \omega_\phi(h^{-1}(A)) \wedge \omega_\phi(h^{-1}(A')),
\]
and thus 
$\omega_\phi \circ h^{-1}\colon \sAtt(\psi) \to \sAtt(\phi)$ is a lattice homomorphism. 
\eproof

\proof[Remark~\ref{conjatt}]
If $\tau\times h\in \Hom(\phi,\psi)$ is a conjugacy, then
\begin{equation}
    \label{conj1}
h\bigl( \phi_t(x)\bigr) = \psi_t^\dagger\bigl(h(x)\bigr).
\end{equation}
Define $y=h(x)$ and $s=\tau\bigl(t,h^{-1}(y)\bigr)$.
Since $h$ is a homeomorphism, we obtain $\tau^{-1}(s,y)$, and therefore
\begin{equation}
    \label{conj2}
\phi_s^\dagger\bigl(h^{-1}(y) \bigr) = h^{-1}\bigl( \psi_s(y)\bigr),
\end{equation}
where $\phi_s^\dagger = \psi\bigl(\tau^{-1}(s,\cdot),\cdot  \bigr)$.
This proves that $\tau^{-1}\times h^{-1} \in \Hom(\psi,\phi)$ is a conjugacy.

Let $A\in \sAtt(\psi)$, then by Proposition~\ref{prop:finvAtt}, we have  $\omega_\phi\bigl(h^{-1}(A) \bigr)\in \sAtt(\phi)$. By Equation~\eqref{conj2} we have
$\phi_s^\dagger\bigl(h^{-1}(A) \bigr) = h^{-1}\bigl( \psi_s(A)\bigr) = h^{-1}(A)$ for all $s\ge 0$, which proves invariance of $h^{-1}(A)$. Furthermore, since $h$ is a homeomorphism, it follows that $h^{-1}(A)$ is closed, and thus 
$\omega_\phi\bigl(h^{-1}(A)\bigr) = h^{-1}(A)$, which proves that $h^{-1}(A) \in \sAtt(\phi)$.
Similarly, $h(A) \in \sAtt(\psi)$ for all $A\in \sAtt(\phi)$.
\eproof

\section{Repellers}
\label{repels12}


In Remarks \ref{remark:alpha} and \ref{remark:alpha2} we indicated that one can also construct continuation frames $(\sRep,\sRNbhd,\alpha)$ based repelling neighborhoods and repellers which yields the \etale space $\mathrm{\Pi}[\sRep]$.
For a dynamical system $\phi\colon \T^+\times X \to X$ we define $\phi_{-t} := \phi_t^{-1}$ as the inverse image. The map $\phi(-t,x)$ also satisfies the semigroup property.
This  allows us to define the notion {\em alpha-limit set} as
\[
\alpha_\phi(U) := \bigcap_{t\ge 0} \cl \bigcup_{s\ge t} \phi_{-s}(U).
\]
Some properties of $\alpha_\phi(U)$ are: (i) $\alpha_\phi(U)$ is compact, closed, (ii) $\alpha_\phi(U)$ is a forward-backward invariant set for the dynamics, (iii) $\alpha_\phi\bigl(\alpha_\phi(U)\bigr) \supset \alpha_\phi(U)$, 
(iv) $\alpha_\phi(U\cup V) = \alpha_\phi(U) \cup \alpha_\phi(V)$.
A neighborhood $U\subset X$ is called a {\em repelling neighborhood} if $\alpha_\phi(U)
\subset \Int U$. Repelling neighborhoods form a bounded, distributive lattice denoted by $\sRNbhd(\phi)$. The binary operations are $\cap$ and $\cup$.
A subset $A\subset X$ is called a {\em repeller} if there exists an repelling neighborhood $U\subset X$ such that $R=\alpha_\phi(U)$, which is a neighborhood of $R$ by definition. Repellers are compact, closed, forward-backward invariant sets and compose a bounded, distributive lattice $\sRep(\phi)$ with binary operations $\cup$ and $\cap$. As before
 $\phi \mapsto \sRNbhd(\phi)$  and $\phi \mapsto \sRep(\phi)$ define the contravariant functors $\sRNbhd$  and $\sRep$ from
$\sDS(\T,X) \to \sBDLat$. 
The functor $\sRNbhd\colon\sDS(\T,X)\to\sBDLat$ is a stable structure
and  $(\sRep,\sRNbhd,\alpha)$ forms a continuation frame in a similar way. 
From the continuation frame $(\sRep,\sRNbhd,\alpha)$ we obtain the \etale space
$(\mathrm{\Pi}[\sRep],\pi)$.

For a dynamical system $\phi$ consider the duality isomorphism $A\mapsto A^*$, $A\in \sAtt(\phi)$.
Since for $U\in \sANbhd(\phi)$ the maps
$U\mapsto U^c$ and $\omega_\phi(U) \mapsto \alpha_\phi(U^c)$ define lattice isomorphisms we also have the natural transformations $^c\colon \sANbhd \Longleftrightarrow \sRNbhd$ and $^*\colon \sAtt \Longleftrightarrow \sRep$. This yields the following commutative diagram:
\[
\begin{tikzcd}[row sep=1.5em, column sep = 1.5em]
    \sANbhd(\psi) \arrow[rrrr,leftrightarrow,"c"] \arrow[drrr,"h^{-1}"] \arrow[ddd,"\omega_\psi"] &&&&
    \sRNbhd(\psi) \arrow[ddd,"\alpha_\psi"] \arrow[drrr,"h^{-1}"] \\
    &&& \sANbhd(\phi) \arrow[rrrr,leftrightarrow,"c"] \arrow[ddd,"\omega_\phi"] &&&&
    \sRNbhd(\phi) \arrow[ddd,"\alpha_\phi"] \\ \\
        \sAtt(\psi) \arrow[rrrr,leftrightarrow,"*_\psi"] \arrow[drrr, "\sAtt(h)"] &&&& \sRep(\psi)\arrow[drrr,"\sRep(h)"] \\
    &&& \sAtt(\phi) \arrow[rrrr,leftrightarrow,"*_\phi"] &&&& \sRep(\phi)
    \end{tikzcd}
\]
where $\sAtt(h) = \omega_\phi \circ h^{-1}$ and $\sRep(f) := *_\phi\circ \omega_\phi \circ h^{-1} \circ *_\psi$. 
This asymmetry between attractors and repellers is typical for noninvertible systems.
For invertible systems the symmetry is restored so that $\sRep(h)= \alpha_\phi \circ h^{-1}$.

\section{Function spaces and the compact-open topology}
\label{cot}
We recall some basic facts about topologies on function spaces of continuous functions.
Let $X$ and $Y$ be arbitrary topological spaces and let $C(X,Y)$ the denote the set of all continuous maps $f\colon X\to Y$. A topology on $C(X,Y)$ which is of particular importance is the \emph{compact-open topology} which is defined a
 subbasis of sets of the form 
\[
O(K,U):= \bigl\{f~|~f(K) \subset U\hbox{ for $K$ compact in $X$ and $U$ open in $Y$}\bigr\},
\]
where $K$ ranges over all compact subsets in $X$ and $U$ ranges over all open subsets in $Y$, cf.\ \cite{Fox}. 
 If $X$ is a locally compact, Hausdorff space then the compact-open topology is the weakest topology such that the map $(f,x)\mapsto f(x)$, $f\in C(X,Y)$, is continuous, cf.\ \cite[Cor.\ 1.2.4]{Marcelo}.
If $X$ is compact and $Y$ is a metric space with metric $d$, then the compact-open topology corresponds with the metric topology on $C(X,Y)$ given by the metric:
\[
d(f,g)= \sup\limits_{x\in X} d(f(x),g(x)),\qquad f,g\in C(X,Y),
\]
cf.\ \cite{Muger,Schwendiman}.

Let $\mathrm{\Lambda}$ be an arbitrary topological space. For a  continuous map $h\colon \mathrm{\Lambda} \times X \to Y$ we define the  \emph{transpose} of $h$ by:
\[
h_\bas\colon \mathrm{\Lambda} \to C(X,Y),\quad\quad \lambda \mapsto h_\bas(\lambda)=h^\lambda := h(\lambda,\cdot).
\]
Following the terminology in \cite{Escardo} we say that 
a topology on $C(X,Y)$ is \emph{weak} if  continuity of $h$ implies  continuity of the transpose $h_\bas$, and a topology is \emph{strong} if continuity of the transpose $h_\bas$ implies continuity of $h$. For arbitrary topological spaces $X,Y$ and $\mathrm{\Lambda}$ the compact-open topology is  a weak topology on $C(X,Y)$, i.e. $h$ continuous implies that $h_\bas$ is continuous, cf.\ \cite[Lemma 1]{Fox},\cite{Muger}.
If $X$ is regular and locally compact (in particular for locally compact, Hausdorff spaces), 
then the compact-open topology is is both weak and strong, i.e. $h$ is continuous if and only if $h_\bas$ is continuous, cf.\ \cite[Theorem 1]{Fox}, \cite{Muger}. This implies that for regular and locally compact spaces $X$
 the compact-open topology on $C(X,Y)$ is both weak and strong, which is also referred to as an \emph{exponential topology}, cf.\ \cite{Escardo}. 
 The latter is unique.
 Finally, the map $h\mapsto h_\bas$ is an embedding when both $\mathrm{\Lambda}$ and $X$ are Hausdorff spaces. The map is a homeomorphism when $\mathrm{\Lambda}$ is Hausdorff and $X$ is locally compact, Hausdorff, cf.\ \cite{Muger}.
 
For a compact metric space $(X,d)$ define $\big(\sH(X),d_\sH\big)$ to be the metric space of compact subsets of $X$ equipped with the Hausdorff metric $d_\sH$.
Every continuous function $f\colon X \to Y$ induces a continuous function $f^\sH\colon \sH(X) \to \sH(Y)$, which sends compact subsets to their image under the function $f$. 
Recall the Hausdorff metric:
\[
d_\sH(K,K') := \max\bigg\{\sup_{x\in K}\inf_{x'\in K'} d(x,x') , \sup_{x'\in K'}\inf_{x\in K} d(x,x')\bigg\}, \quad K,K'\in \sH(X).
\]

\begin{lemma}\label{lemma:haus}
Let $X$, $Y$ be compact metric spaces, $\mathrm{\Lambda}$ a topological space, and $h\colon\mathrm{\Lambda} \times X \to Y$ continuous map.  Then, the function  
$$h^\sH\colon\mathrm{\Lambda} \times \sH(X)  \to \sH(Y) \quad \quad (\lambda,K) \mapsto h^\sH\big(\{\lambda\} \times K\big)$$
is continuous. 
\end{lemma}
\proof
We will first prove the assignment
$$
\sD \colon C(X,Y) \to C(\sH(X), \sH(Y)) \quad \quad f \mapsto \sD(f) := f^\sH,
$$
is continuous. Let $f,g\in C(X,Y)$. Then, $d_C(f,g)= \sup\limits_{x\in X} d(f(x),g(x))$ and 
$$
d_{C_\sH}(f^\sH, g^\sH) = \sup\limits_{K\in \sH(X)} d_\sH\big(f(K),g(K)\big).
$$ 
Since $d(y,y')\leq \sup\limits_{x\in X} d(f(x),g(x)) = d(f,g)$ for any choice of $y\in f(K), y' \in g(K)$ it follows that  $d_{C_\sH}(f^\sH, g^\sH) \leq d_C(f,g)$. Moreover, since points are compact subsets, the reversed inequality  holds as well:
$$ 
d_C(f,g) = \sup\limits_{x\in X} d_\sH\big( f(\{x\}),g(\{x\})\big)
\leq d_{C_\sH}(f^\sH, g^\sH),
$$
which proves that $\sD$ is an isometry implying its continuity.  
The metric topology on $C(\sH(X),\sH(Y))$ coincides with the compact-open topology and therefore
$h_\sH$ is continuous if and only if its \emph{transpose} $h^\sh_\bas$, given by  
$$
h^\sH_\bas\colon \mathrm{\Lambda} \to C\big(\sH(X), \sH(Y)\big), \quad \quad \lambda \mapsto h^\sH_\bas(\lambda) =h_\sH^\lambda :=
h_\sH\big(\{\lambda\},\cdot \big),
$$
is continuous, \cite{Fox,Escardo}. 
Note that $h^\sH_\bas = \sD \circ h_\bas$ which proves that $h^\sH_*$ is continuous, which completes the proof.
\eproof

\begin{remark}
In this paper we abuse notation by writing $h(K)$, $K\in \sH(X)$ denoting $h^\sH(K)$ in accordance with the analogous notation for $h(U) = \{y=h(x)~|~x\in U\}$, $U\subset X$.
\end{remark}

\end{sloppypar}
\end{document}